\documentclass[reqno, A4paper]{amsart}
\usepackage{amsfonts, amsmath, amsthm, amssymb, latexsym, xfrac, mathrsfs, mathtools, float}

\usepackage{bbm}

\usepackage{tikz}
\usetikzlibrary{calc}

\usepackage[
  headheight=14pt,
  top=1.45in,bottom=1.35in,left=1.14in,right=1.58in,
  twoside,
  asymmetric,
]{geometry}
\usepackage{hyperref, xcolor}
\hypersetup{
    colorlinks = false,
    linkbordercolor = {white},
    pdfauthor=Lukas Langen
}

\usepackage{parskip}

\usepackage[font=scriptsize]{caption}
\usepackage{subcaption}

\newcommand\wt{\widetilde}
\newcommand\N{\ensuremath{\mathbb{N}}}
\newcommand\R{\ensuremath{\mathbb{R}}}

\renewcommand{\S}{\mathcal{S}}

\newcommand\C{\ensuremath{\mathbb{C}}}

\renewcommand\a{\mathfrak{a}}

\DeclareMathOperator{\Exp}{Exp}

\DeclareMathOperator{\End}{End}

\DeclareMathOperator{\diag}{diag}
\DeclareMathOperator{\conv}{conv}

\DeclareMathOperator{\Lie}{Lie}

\DeclareMathOperator{\g}{\mathfrak g}

\renewcommand{\k}{\mathfrak k}
\DeclareMathOperator{\p}{\mathfrak p}

\renewcommand{\Re}{\text{Re}}
\newcommand{\matr}[1]{\begin{pmatrix} #1 \end{pmatrix}}

\newcommand\opweylcham{\ensuremath{\a_{+}}}
\newcommand\weylcham{\ensuremath{\overline{\a_{+}}}}

\newcommand\areg{\ensuremath{\a_{\operatorname{reg}}}}
\newcommand\anreg{\ensuremath{\a^{I}_{\operatorname{reg}}}}

\DeclareMathOperator{\spann}{span}

\newcommand{\fundingThanks}{The author was supported by the German Research Foundation (DFG) via the grant SFB-TRR  358/1 2023-491392403.}

\makeatother

\usepackage[nameinlink,capitalize]{cleveref}

\makeatletter
\@namedef{subjclassname@2020}{\textup{2020} Mathematics Subject Classification}
\makeatother

\theoremstyle{plain}
\newtheorem{theorem}{Theorem}[section]
\newtheorem{corollary}[theorem]{Corollary}
\newtheorem{lemma}[theorem]{Lemma}
\newtheorem{sublemma}[theorem]{Sublemma}
\newtheorem{proposition}[theorem]{Proposition}
\theoremstyle{definition}

\theoremstyle{remark}
\newtheorem{remark}[theorem]{Remark}

\numberwithin{equation}{section}

\let\phi\varphi

\title{Uniform bounds on the Dunkl kernel}
\author{Lukas Langen}
\address{Institut f\"ur Mathematik, Universit\"at Paderborn, Warburger Str. 100, D-33098 Paderborn, Germany}
\email{llangen@math.upb.de}

\subjclass[2020]{Primary: 33C52; Secondary 33C67}
\keywords{Dunkl operators, Dunkl kernel, asymptotics}

\thanks{\fundingThanks}

\begin{document}

\date{\today}

\begin{abstract}
  For an arbitrary reduced root system, we give upper bounds for the Dunkl kernel with regular spectral parameter and its derivatives, which are uniform in the spatial variable. These estimates generalize well-known sharp upper bounds for classical one-variable Bessel functions and for spherical functions of Cartan motion groups.
  As a consequence, we prove that the representing measure of Dunkl's intertwining operator is absolutely continuous with respect to the Lebesgue measure for multiplicities $k> 1/2$ and generic spectral parameter. This settles a conjecture posed in \cite{roslerDeJeu} at least for $k>1/2$.
\end{abstract}

\maketitle

\section{Introduction}
Let $R$ be a reduced root system in a Euclidean space $(\a, \langle \cdot, \cdot\rangle)$ with finite reflection group $W$. Associated with the pair $(R, k)$, where $k\colon R\to\C$ denotes a $W$-invariant multiplicity function, Dunkl \cite{dunkl2} introduced a commuting family of differential-reflection operators.
Subsequently, these operators were studied (\cite{dunkl3, dunkl1991, opdamBessel}) and shown to possess joint eigenfunctions for prescribed spectral parameters $\lambda\in\a_{\C}$ and multiplicity $\Re\, k\geq 0$, given by the Dunkl kernel $\Exp_{k}(\lambda, \cdot)$.
To the Dunkl kernel, one can associate an integral transform - nowadays called the Dunkl transform - which intertwines the action of Dunkl operators with multiplication operators (\cite{dunkl5, dejeu}).
If $k=0$, the operators are just directional derivatives and the Dunkl kernel is the usual exponential function. On the other hand, for Weyl groups and certain half-integer multiplicities, the ($W$-invariant) Dunkl operators appear as radial parts of invariant differential operators on Cartan motion groups (\cite{heckman, deJeuPaleyWiener, dejeu}). In this case, the reflection group invariant analogue of the Dunkl kernel is a spherical function of this Cartan motion group (a Bessel function).
Consequently, the Dunkl transform shares similarities with both the classical Fourier transform on $\R^{n}$ and the spherical transform on Cartan motion groups (\cite{helgasonGroups}). 

In order to establish harmonic analysis for the Dunkl transform, such as an inversion and Plancherel theorem, estimates on the growth of the Dunkl kernel were of particular importance (\cite{dejeu}). While the rather rough estimates in \cite{dejeu} were sufficient for this task, sharp estimates and asymptotic expansions are still open. This is mainly due to irregular singularities of the Bessel system in contrast to the hypergeometric system, for which estimates and asymptotics of the solutions are known (\cite{hcSphericalI, hcSpherical, trombiVaradarajan, casselmanMilicic, gangolliVaradarajan, heckmanOpdamI, heckmanII, schapira,narayananPasqualePusti}). 
We postpone further comments on this issue to \cref{sec:remarks}.

For certain reflection groups, there are recent sharp asymptotic results for the Dunkl kernel, or at least for its symmetrized version (\cite{ankerTrojanDihedral, graczykSawyerA2, graczykSawyerAn}). However, these results rely heavily on the simple structure of the dihedral group $D_{2n}$, on explicit formulas for the Dunkl kernel associated with the symmetric group $\S_{3}$, or on recursive formulas for the (symmetrized) Dunkl kernel associated with $\S_{n}$, respectively.
For general root systems, the conjectured estimates are so far only established for sufficiently regular arguments, i.e. the estimates are uniform only in certain conical regions within an open Weyl chamber (\cite{roslerDeJeu, amriGasmi}).
These estimates were obtained by transforming the joint eigenvalue problem for the Dunkl kernel into a suitable first-order ODE system and a study of its asymptotic behaviour.
In the geometric cases of Cartan motion groups, sharp uniform upper bounds were obtained by Clerc (\cite{clercBessel}), even if the argument becomes singular. To this end, Clerc performed an inductive argument passing from regular arguments to progressively more singular ones.
The induction step is carried out by investigating the asymptotic properties of certain first-order systems satisfied by the spherical functions. 
There are also recent developments in the complex geometric cases, establishing uniform upper bounds in both the spectral and the spatial variables (\cite{etingofRains}). However, these bounds do not incorporate the structure of the root system and thus cannot be sharp.
Finally, we mention \cite{dziubanskiHejna}, where bounds for the Dunkl heat kernel for general root systems were obtained. They imply estimates for the Dunkl kernel, but these are not optimal.

In the present paper, we give, for fixed spectral parameter $\lambda\in\a+i\areg$ and multiplicity $\Re\, k\geq 0$, upper estimates for the Dunkl kernel $\Exp_{k}(\lambda,\cdot)$ which are uniform in the spatial argument, even if it becomes singular. We also obtain uniform upper bounds in the case $\lambda\in\areg$ if $k\geq 0$. Results in the aforementioned special cases suggest that these are optimal.
While the bounds in \cite{ankerTrojanDihedral, graczykSawyerA2, graczykSawyerAn} need multiplicity $k\geq 0$, our approach also works for multiplicities $\Re\, k \geq 0$. Furthermore, we even obtain uniform upper bounds on arbitrary derivatives $p(\partial)\Exp_{k}(\lambda, \cdot)$ of the Dunkl kernel, which, to our knowledge until now, were only established in the geometric cases (\cite{clercBessel}) and in the rank one case $R=A_{1}$. 

Our approach is as follows: Perform an induction to make regular arguments more singular, similar to \cite{clercBessel}. To this end, we investigate certain first-order systems associated with parabolic subgroups of $W$ coming from the Dunkl kernel eigenvalue property. For the trivial subgroup, the corresponding system recovers that of \cite{roslerDeJeu}. A decisive part of the proof, already in the situation of \cite{clercBessel}, is to further modify these systems so that they allow for a reasonable asymptotic study. However, the methods of \cite{clercBessel} extensively used the ambient geometric structure to do so and do not generalize to the Dunkl setting. We thus have to defer to completely different methods.
We postpone further discussion to \cref{sec:remarks}.

For $k\geq 0$ the Dunkl kernel admits an integral representation (\cite{roslerPositivity}) generalizing the Harish-Chandra integral formula for spherical functions (\cite{hcSphericalI, hcSpherical}), i.e. for $\lambda\in\a$ we have
\begin{align}\label{prop:repMeasure}
  \Exp_{k}(\lambda, z) = \int_{\a} e^{\langle z, \xi\rangle}\ d\mu^{k}_{\lambda}(\xi)\qquad(\forall z\in\a_{\C}) 
\end{align}
with a unique compactly supported probability measure $\mu^{k}_{\lambda}$. In the geometric cases, general results on pushforward measures immediately show that the $W$-invariant analogues of the measures $\mu^{k}_{\lambda}, \lambda\in\areg$ are absolutely continuous with respect to Lebesgue measure in $\a$, see \cite{helgasonGroups}. In fact, in the complex cases, corresponding to $k=1$, these measures are known as the so-called Duistermaat-Heckman measures, which are intensely studied in the context of compact Lie groups and more generally Hamiltonian actions (\cite{heckmanProjectionsOfOrbits, duistermaatHeckman1982variation, guilleminLermanSternberg}). In these cases, the density is piecewise polynomial.

In the general Dunkl setting, even the existence of the integral representation (\ref{prop:repMeasure}) was highly non-trivial as one lacks geometric arguments.
Still, it is conjectured (\cite{roslerDeJeu}) that for $k>0$ and $\lambda\in\areg$, the measure $\mu^{k}_{\lambda}$ is absolutely continuous with respect to Lebesgue measure in $\spann_{\R}R\subseteq\a$. This is known for root systems $R=A_{1}, A_{2}$ and for the $W$-invariant analogues in the case $R=A_{n}$ (\cite{roslerDeJeu, graczykSawyerA2,sawyer2017laplace}).
As an application of our estimates, we show that this conjecture is true for $k>1/2$ and an arbitrary reduced root system $R$. We furthermore obtain results on the differentiability of the density depending on $k$.

This paper is organized as follows. 
In \cref{sec:prelim}, we fix notations and state our main results. \cref{sec:ODE} concerns the construction of first-order systems associated with parabolic subgroups of $W$ from the Dunkl kernel eigenvalue property. \cref{sec:proofMainThm} consists of the proof of the main theorem. In \cref{sec:proofDeriv} and \cref{sec:proofCplx}, certain modifications in the proof are discussed in order to extend the result to derivatives of the Dunkl kernel and to not purely imaginary spectral parameters. \cref{sec:applications} is concerned with applications of the estimates, in particular with the absolute continuity of the measures $\mu^{k}_{\lambda}$. In \cref{sec:remarks}, we conclude with some comments on the geometric situation of Cartan motion groups and compare our situation with the case of Heckman-Opdam theory and symmetric spaces of non-compact type.

\section{Preliminaries and Results}\label{sec:prelim}

Let $(\a, \langle \cdot, \cdot\rangle)$ be a finite-dimensional Euclidean space with norm $|x|\coloneqq \sqrt{\langle x, x\rangle}$. 
We extend the inner product to a complex bilinear form on the complexification $\a_{\C}\coloneqq\C\otimes_{\R}\a$ of $\a$. 
Let $R\subseteq\a$ be a reduced root system spanning $\a$. We normalize the roots according to $\langle \alpha, \alpha\rangle =2$ for all $\alpha\in R$.
Denote by $W$ the finite reflection group generated by reflections $s_{\alpha}\colon x\mapsto x-\langle \alpha, x\rangle \alpha$ in the root hyperplanes $\alpha^{\perp}, \alpha\in R$. 
Fix a subset $R_{+}\subseteq R$ of positive roots with corresponding set of simple roots $\Delta_{+}\subseteq R_{+}$. Corresponding to these choices, we furthermore fix a positive Weyl chamber $\opweylcham = \{x\in\a : \langle \alpha, x\rangle  > 0\ (\forall \alpha\in R_{+})\}$. Finally, denote by $\areg\coloneqq\{x\in\a : \langle \alpha, x\rangle \neq 0\ (\forall \alpha\in R)\}$ the subset of regular elements in $\a$.

Fix a $W$-invariant function $k\colon R\to \C$, called the multiplicity function. Throughout this paper, we shall only be concerned with $\Re\, k\geq 0$, i.e. $\Re\, k_{\alpha}\geq 0\ (\forall\alpha\in R)$,  which we will assume from now on.

The Dunkl operator associated with $(R, k)$ into direction $\xi\in\a$ is given by
\begin{align*}
  T_{\xi}\coloneqq T_{\xi}(R, k) \coloneqq \partial_{\xi}+\sum_{\alpha\in R_{+}} k_{\alpha}\langle \alpha, \xi\rangle \frac{1-s_{\alpha}}{\langle \alpha, \cdot\rangle }. 
\end{align*}

The Dunkl operators define a commuting family of operators on $C^{2}(\a)$. Thus, we may define the homomorphism
\begin{align*}
  \C[\a]\to\End(C^{\infty}(\a)),\ p\mapsto p(T) = p(T(R, k))
\end{align*}
induced by $\a\ni \xi\mapsto T_{\xi}$. Here, $\C[\a]$ denotes the algebra of polynomial functions on $\a$, where we silently identify $\a\cong\a^{\ast}$ via $\langle \cdot, \cdot\rangle$. Similarly, we may identify $\C[\a]$ with constant coefficient differential operators by extending $\a\ni \xi\mapsto \partial_{\xi}$ to $\C[\a]\ni p\mapsto p(\partial)$.
The Dunkl operators are $W$-equivariant:
$$w\circ T_{\xi}\circ w^{-1}=T_{w\xi}.$$

Given $\lambda\in\a_{\C}$, consider the joint eigenvalue problem
\begin{align}\label{eqn:jointEVProb}
  \begin{cases}
    T_{\xi}f = \langle \xi, \lambda\rangle f\quad\text{for all}\ \xi\in \a,\\
    f(0)=1.
  \end{cases}
\end{align}
There is a unique holomorphic function $\Exp_{k}\colon\a_{\C}\times \a_{\C}\to \C$, called the Dunkl kernel, such that for all $\lambda\in\a_{\C}$, $\Exp_{k}(\lambda, \cdot)$ is the unique solution of (\ref{eqn:jointEVProb}), see \cite{dunkl1991, opdamBessel}. 

We recollect a few properties.
\begin{proposition}\label{prop:dunklKernel}
Let $w\in W$, $x,y \in\a_{\C}$ and $s\in \C$. Then
\begin{enumerate}
  \item $\Exp_{k}(sx, y)=\Exp_{k}(x, sy)$.
  \item  $\Exp_{k}(wx, wy)=\Exp_{k}(x, y)=\Exp_{k}(y, x)$.
  \item  $\overline{\Exp_{k}(x, y)}=\Exp_{\overline{k}}(\overline{x}, \overline{y})$.
\end{enumerate}
For $p\in\C[\a]$, there is a constant $C=C_{R,k, p}\geq 0$ such that
\begin{enumerate}
  \item[(4)] $|p(\partial)\Exp_{k}(x, y)|\leq C |x|^{\deg(p)}e^{\max_{w\in W} \Re \langle wx, y \rangle}$.
\end{enumerate}
Here, $p(\partial)$ is understood to act in the $y$-variable.
\end{proposition}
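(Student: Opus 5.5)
These are standard facts, and the plan is to derive each one from the uniqueness of the solution to (\ref{eqn:jointEVProb}) together with the holomorphy of $\Exp_k$. Uniqueness handles (1)–(3); for (4) I would invoke the known estimate of \cite{dejeu} / \cite{roslerPositivity} rather than reprove it.

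For (1), fix $x$ and $s\neq 0$. Put $f(y)=\Exp_k(sx,y)$ and $g(y)=\Exp_k(x,sy)$. The Dunkl operators are homogeneous of degree $-1$ under dilations, since $\frac{1-s_\alpha}{\langle\alpha,\cdot\rangle}$ scales in the same way as $\partial_\xi$. Hence $T_\xi g(y)=s\,(T_\xi\Exp_k(x,\cdot))(sy)=s\langle\xi,x\rangle g(y)$, and $g(0)=1$. Uniqueness gives $f=g$, and the case $s=0$ follows by continuity.

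The $W$-invariance in (2) comes from equivariance. Let $h(y)=\Exp_k(x,w^{-1}y)=(w\Exp_k(x,\cdot))(y)$. Then $T_\xi h=w T_{w^{-1}\xi}\Exp_k(x,\cdot)=\langle w^{-1}\xi,x\rangle h=\langle\xi,wx\rangle h$. Uniqueness then gives $\Exp_k(wx,y)=\Exp_k(x,w^{-1}y)$.

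Symmetry is the genuinely nontrivial part of (2), and I expect it to be the main obstacle. I would use Dunkl's pairing $[p,q]_k=(p(T)q)(0)$, which is symmetric. Write $\Exp_k(x,y)=\sum_n\sum_{j}\phi_j(x)\psi_j(y)$, where the $\phi_j$ and $\psi_j$ are dual bases of degree-$n$ homogeneous polynomials with respect to $[\cdot,\cdot]_k$. This presumes that the pairing is nondegenerate, which holds for $\Re k\geq 0$ by \cite{dunkl1991}, \cite{opdamBessel}. Symmetry of the pairing lets one exchange the roles of the two bases, which yields $\Exp_k(x,y)=\Exp_k(y,x)$. Alternatively, one can simply cite \cite{dunkl1991} for this.

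For (3), $k\mapsto T_\xi(R,k)$ is linear in $k$ with real coefficients otherwise. So $y\mapsto\overline{\Exp_k(x,\overline y)}$ solves the eigenproblem for multiplicity $\overline k$ with spectral parameter $\overline x$, and it equals $1$ at $0$. Uniqueness gives the claim.

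For (4), the estimate is exactly the bound of de Jeu (\cite{dejeu}), and for $k\geq0$ it also follows from the integral representation (\ref{prop:repMeasure}). I would argue in the following order:
\begin{itemize}
\item For $k\ge0$, apply $p(\partial_y)$ under (\ref{prop:repMeasure}). This is justified because $\mu^k_x$ is compactly supported, with support in $\conv(Wx)$ for real $x$.
\item The integrand is then $p(\xi)e^{\langle y,\xi\rangle}$, and $|p(\xi)|\le C|x|^{\deg p}$ on that support.
\item The maximum of $\Re\langle y,\xi\rangle$ over a convex hull is attained at a vertex, giving the exponent $\max_w\Re\langle wx,y\rangle$.
\end{itemize}
For complex $x$ and complex $k$ with $\Re k\ge0$, I would instead cite de Jeu's argument. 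That argument bounds the solution of the first-order system $\partial_\xi F = A_\xi F$ on the $W$-orbit vector $F=(\Exp_k(x,w\cdot))_w$ via Gronwall, and then passes to derivatives by Cauchy estimates in $y$ on polydiscs of radius $\sim 1/|x|$. Those Cauchy estimates produce the factor $|x|^{\deg p}$, while the exponential changes by at most a constant factor on such polydiscs.
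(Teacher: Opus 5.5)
Your proposal is correct and follows essentially the paper's route. The paper simply cites de Jeu (Theorem 2.8 and Lemma 3.5) and Dunkl for these facts, and your arguments are the standard proofs behind those citations: uniqueness of the solution of (\ref{eqn:jointEVProb}) for (1), (3) and the $W$-invariance in (2), Dunkl's symmetric nondegenerate pairing for $\Exp_k(x,y)=\Exp_k(y,x)$, and for (4) the Gronwall-type bound on the orbit vector followed by Cauchy estimates on polydiscs of radius $1/|x|$.

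One caveat: the steps that produce the factor $|x|^{\deg p}$ work only for homogeneous $p$. This applies both to $|p(\xi)|\le C|x|^{\deg p}$ on $\conv(Wx)$ and to the Cauchy estimates. In fact (4) as literally stated fails otherwise: take $p(\partial)=1+\partial_\eta$ and $x=0$, where $\Exp_k(0,\cdot)\equiv 1$. So for general $p$ the factor should read $(1+|x|)^{\deg p}$, which is harmless for the paper because the spectral parameter is always fixed there.
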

\begin{proof}
  Properties (1) and (3) are proven in \cite[Theorem 2.8]{dejeu}. Property (2) is due to the $W$-equivariance of the Dunkl operators and results of Dunkl (\cite{dunkl1991}); see also \cite[Theorem 2.8]{dejeu}. Estimate (4) is a consequence of Lemma 3.5 in \cite{dejeu}.
\end{proof}

The $W$-invariant analogue of the Dunkl kernel
\begin{align*}
  J_{k}(\lambda, \cdot) \coloneqq \frac{1}{|W|}\sum_{w\in W} \Exp_{k}(\lambda, w\cdot)
\end{align*}
is called the Bessel function. It can be characterized as the unique $W$-invariant holomorphic solution of the joint eigenvalue problem
\begin{align}\label{eqn:jointEVProbBessel}
  \begin{cases}
    p(T)f = p(\lambda) f\quad\text{for all}\ p\in \C[\a]^{W},\\
    f(0)=1.
  \end{cases}
\end{align}
Here, $\C[\a]^{W}$ denotes the algebra of $W$-invariant polynomials.

The goal of this paper is to improve the estimate in \cref{prop:dunklKernel}(4). While it is sharp for $k=0$, estimates in special cases \cite{ankerTrojanDihedral, graczykSawyerA2, graczykSawyerAn} and results for spherical functions of Cartan motion groups (\cite{clercBessel}; also see \cref{sec:remarks}) suggest better bounds. This was already speculated in \cite[4.3]{dunkl5} due to the asymptotic behaviour of classical Bessel functions, which appear in the rank one case (also see \cite[Section 2]{roslerDeJeu}). 

For multiplicities $\Re\, k\geq0$, which we assume, there is a rich theory based on an integral transform associated to $\Exp_{k}(\lambda, \cdot)$, called the Dunkl transform. In particular, there is a Plancherel theorem (\cite{dunkl5, dejeu}) establishing an isometric isomorphism $L^{2}(\a,\,|\omega_{k}| dx)\to L^{2}(\a ,\,|\omega_{k}| dx)$ with respect to the weight function
\begin{align*}
  \omega_{k}(x)\coloneqq \prod_{\alpha\in R_{+}}|\langle \alpha, x\rangle|^{2k_{\alpha}}.
\end{align*}
We introduce the modification
\begin{align*}
  \Omega_{k}(x)\coloneqq \prod_{\alpha\in R_{+}} (1+|\langle \alpha, x\rangle |)^{2k_{\alpha}}.
\end{align*}

Our first main result is

\begin{theorem}\label{thm:mainthm}
  Let $\lambda\in\areg$ and $\Re(k)\geq 0$. Then there exists a constant $C=C_{R,k,\lambda}\geq 0$ such that
  \begin{align}\label{ineq:mainthm}
    |\Exp_{k}(i\lambda, x)|\leq C\, |\Omega_{k}^{-1/2}(x)| 
  \end{align}
  for all $x\in\a$. 
\end{theorem}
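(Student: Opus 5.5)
The bound will come from an induction over parabolic subgroups of $W$, passing from regular arguments to progressively more singular ones, in the spirit of Clerc's argument for Cartan motion groups. The estimate is organised by the faces of the Weyl chambers. For a subset $I\subseteq\Delta_{+}$, let $W_{I}$ be the corresponding parabolic subgroup, $R_{I}$ its root system, and consider the conical region of $x$ (up to $W$-translation) in which $|\langle\alpha,x\rangle|$ stays bounded for $\alpha\in R_{I}$ while it grows at least like $c|x|$ for $\alpha\in R_{+}\setminus R_{I}$. These regions, together with a compact set, cover $\a$. On such a region $\Omega_{k}^{-1/2}(x)$ is comparable to $\prod_{\alpha\in R_{+}\setminus R_{I}}|\langle\alpha,x\rangle|^{-k_{\alpha}}$. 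So for each $I$ it suffices to prove
$$|\Exp_{k}(i\lambda,x)|\lesssim \prod_{\alpha\in R_{+}\setminus R_{I}}|\langle\alpha,x\rangle|^{-\Re k_{\alpha}}.$$
We argue by induction on $|I|$. The base case $I=\emptyset$ is the regular cone, where the estimate is the known one from \cite{roslerDeJeu, amriGasmi}. We reprove it within the same framework described next.

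For fixed $I$, we would build a first-order ODE system along rays $t\mapsto x_{0}+t\eta$. Here $\eta$ lies in the face of $I$, so $\langle\alpha,\eta\rangle=0$ for $\alpha\in R_{I}$ and $\langle\alpha,\eta\rangle>0$ otherwise. The unknown is the vector $F(t)=\bigl(\Exp_{k}(i\lambda,w(x_{0}+t\eta))\bigr)_{w\in W}$, or better a version adapted to the cosets $W_{I}\backslash W$ that also includes suitable Dunkl-operator images with respect to $R_{I}$. Writing out $T_{\eta}\Exp_{k}=\langle\eta,i\lambda\rangle\Exp_{k}$ and using that $\partial_{\eta}$ commutes with the $W_{I}$-part, we get
$$F'(t)=\Bigl(A_{0}+\frac{1}{t}A_{1}+O(t^{-2})\Bigr)F(t).$$
The constant matrix $A_{0}$ comes from $i\langle w\eta,\lambda\rangle$. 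Since $\lambda$ is regular, its eigenvalues are purely imaginary and distinct across the cosets that $\eta$ separates. The residue matrix $A_{1}$ collects the reflection terms $k_{\alpha}\langle\alpha,\eta\rangle/\langle\alpha,x_{0}+t\eta\rangle$ for $\alpha\notin R_{I}$. The $O(t^{-2})$ error is uniform in $x_{0}$ only when $x_{0}$ lies in a region where the roots outside $R_{I}$ are large. This is where the induction enters: the remaining dependence is on the $R_{I}$-directions, and there the kernel restricted to $W_{I}$-components is controlled by the induction hypothesis applied to the smaller system, together with \cref{prop:dunklKernel}(4) on compact pieces.

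The asymptotic analysis then goes as follows. A constant gauge transformation diagonalises $A_{0}$ into blocks with distinct imaginary eigenvalues. A Levinson-type $O(1/t)$ shearing transformation then removes the off-diagonal part of $A_{1}$ between different blocks. Within each block the diagonal part of $A_{1}$ has real part $-\sum_{\alpha\in R_{+}\setminus R_{I}}\Re k_{\alpha}\langle\alpha,\eta\rangle/\langle\alpha,\eta\rangle$ times a normalisation. A Gronwall estimate then gives $|F(t)|\lesssim t^{-\sum\Re k_{\alpha}}$, uniformly in the base point $x_{0}$. Combined with the growth of $\langle\alpha,x_{0}+t\eta\rangle\sim t$, this yields the desired product bound on the region of $I$.

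The main obstacle is the uniformity. The coefficients of the system contain $1/\langle\alpha,x\rangle$ for $\alpha\in R_{I}$, which are not small near the walls. Also, the within-block part of $A_{1}$ is not automatically diagonal, because the reflections $s_{\alpha}$, $\alpha\notin R_{I}$, mix the components. One must show that its real part is bounded above by the claimed exponent: its eigenvalues should have real part $-\sum\Re k_{\alpha}$, with no Jordan blocks producing logarithms. My first attempt would be to modify the system: replace the plain values by a basis of $W_{I}$-isotypic combinations and Dunkl-operator derivatives along $R_{I}$, which turns the block of $A_{1}$ into a scalar plus nilpotent part that can be gauged away. If that fails, I would settle for an estimate with a logarithmic loss and remove the loss with a second Gronwall pass.
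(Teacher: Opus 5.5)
Your opening reduction fails, and what it leaves out is exactly the hard part of the theorem. The regions where the $R_I$-roots stay bounded and the other positive roots are $\ge c|x|$ do not, together with a compact set, cover $\a$. For $R=A_2$ take $x$ with $\langle\alpha_1,x\rangle=s$ and $\langle\alpha_2,x\rangle=\sqrt{s}$. For large $s$ this point lies in none of your regions, for any fixed constants, and $W$-translation does not help. The bound needed there, $\Omega_{k}^{-1/2}(x)\asymp s^{-2\Re\,k}\,s^{-\Re\,k/2}$, contains a factor coming from a root that is neither bounded nor comparable to $|x|$.

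If you make the regions conical instead ($|\langle\alpha,x\rangle|\le\epsilon|x|$ for $\alpha\in R_I$), they do cover $\a$. But then $\Omega_k^{-1/2}$ is no longer comparable to $\prod_{\alpha\in R_{+}\setminus R_{I}}|\langle\alpha,x\rangle|^{-\Re\,k_{\alpha}}$: at the point above, your target for $I=\{\alpha_2\}$ is $s^{-2\Re\,k}$, which loses the factor $s^{-\Re\,k/2}$. So the induction hypothesis must itself carry the decay in $R_I$-roots of arbitrary size, and the rays must be chosen so that this decay is preserved.

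The paper does this inside a sector $S=\{\langle\alpha_{1},x\rangle\ge\cdots\ge\langle\alpha_{n},x\rangle\ge0\}$ with a fixed direction $H_i$, where $\langle\alpha_j,H_i\rangle=1$ for $j\le i$ and $0$ for $j>i$. It writes $x\in\a^{i}$ as $x=x_0+t(x)H_i$, with $x_0$ in the non-compact set $\a^{i+1}\cup\partial\a^{0}$. On that set the inductive bound $|\Exp_{k}(i\lambda,x_0)|\lesssim|\Omega_{k}^{-1/2}(x_0)|\le|\omega_{I}^{-1/2}(x_0)\Omega_{I}^{-1/2}(x_0)|$ is already known. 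Along the ray:
\begin{itemize}
\item all roots outside $R_I$ are comparable to $a+t$, with $a=\langle\alpha_1,x_0\rangle\ge1$;
\item the $R_I$-roots are arbitrary but frozen, and $\Omega_I(x_0+tH_i)=\Omega_I(x_0)$.
\end{itemize}
So one only has to show $\|\Phi(t)\|\lesssim|\Omega_{I}^{-1/2}(x_0)|$ with constants independent of $a$. In the $A_2$ example, $x_0$ is the diagonal point with both simple roots equal to $\sqrt{s}$, which the previous induction step has already handled.

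For the same reason, uniformity in the base point cannot come from your expansion $A_{0}+t^{-1}A_{1}+O(t^{-2})$. The remainder is of order $\langle\alpha,x_0\rangle t^{-2}$, and the base points necessarily have roots outside $R_I$ of size $a\to\infty$. Likewise, $|F(t)|\lesssim t^{-\sum\Re\,k_{\alpha}}$ cannot hold uniformly in $x_0$, since $F(0)$ itself depends on $x_0$.

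The paper instead keeps the exact coefficients. The vector $\Phi=\bigl(\omega_{I}^{1/2}(x)e^{-i\langle x,w\lambda\rangle}\Exp_{k}(iw\lambda,x)\bigr)_{w\in W}$ satisfies $\partial_{H_i}\Phi=A\Phi$. Here $A$ has only the oscillating off-diagonal entries $k_{\alpha}\langle\alpha,H_i\rangle\langle\alpha,x\rangle^{-1}e^{-i\langle\alpha,x\rangle\langle\alpha,g\lambda\rangle}$, with $\alpha\in R_{+}\setminus R_{I}$. Then $Q=-\int_{t}^{\infty}A$ is $O((a+t)^{-1})$ and $AQ$ is $O((a+t)^{-2})$, uniformly in $a$. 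The substitution $Z=(\mathbbm{1}+Q)^{-1}\Phi$ and a Gronwall-type bootstrap, using $|\Omega_{I}^{-1/2}(x_0)|\ge c\,a^{-m_I}$, close the step.

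The obstacle you single out is not actually one:
\begin{itemize}
\item $T_{\eta}$ contains no terms with $\alpha\in R_I$, since $\langle\alpha,\eta\rangle=0$ for those roots.
\item The off-diagonal reflection terms couple only $g$ with $s_{\alpha}g$ for $\alpha\notin R_I$. Their $A_0$-eigenvalues differ by $i\langle\alpha,g\lambda\rangle\langle\alpha,\eta\rangle\neq0$, by regularity of $\lambda$.
\item The diagonal part of the reflection terms is the scalar $-\partial_{\eta}\log\omega_{I}^{1/2}(x)$, which is exactly what the weight $\omega_{I}^{1/2}$ removes.
\end{itemize}
So you need no isotypic bases, no Dunkl derivatives along $R_I$, and no logarithmic losses. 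The real work is the choice of rays and base points.
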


In order to prove this theorem, we will start with sufficiently regular $x\in\a$ and inductively make them more singular. This will be done by transforming the joint eigenvalue problem of the Dunkl operators into a differential system of order one, where we will successively evolve into increasingly singular directions. Thus, we investigate the behaviour of $\Exp_{k}(\lambda, \cdot)$ along rays in singular directions starting from regular facets of certain sectors $S\subseteq\weylcham$.

Using the classical Phragmen-Lindelöf principle, we then conclude
\begin{corollary}\label{cor:realSpectralParam}
  Let $\lambda\in\opweylcham$ and $k\geq 0$. Then there exists a constant $C=C_{R, k,\lambda}\geq 0$ such that
  \begin{align}\label{ineq:mainCor}
    |\Exp_{k}(\lambda, x)|\leq C e^{\langle \lambda, x\rangle}\Omega_{k}^{-1/2}(x) 
  \end{align}
  for all $x\in\weylcham$. 
\end{corollary}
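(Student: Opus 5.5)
The idea is to deduce Corollary~\ref{cor:realSpectralParam} from Theorem~\ref{thm:mainthm} by a Phragm\'en--Lindel\"of argument in the spatial variable. We fix $x\in\weylcham$ and $\lambda\in\opweylcham$ and use Proposition~\ref{prop:dunklKernel}(1) to rewrite
\[
F(z)\coloneqq \Exp_{k}(\lambda, zx)=\Exp_{k}(z\lambda, x), \qquad z\in\C .
\]
This is an entire function of $z$. Since $k\geq 0$ is real and $\lambda, x$ are real, Proposition~\ref{prop:dunklKernel}(3) shows that $F$ is real on $\R$.

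We compare $F$ with $e^{z\langle\lambda,x\rangle}$ on the closed right half-plane $\{\Re z\geq 0\}$ and set $G(z)\coloneqq e^{-z\langle \lambda, x\rangle}F(z)$.
\begin{itemize}
\item \emph{Imaginary axis.} For $z=it$ with $t\in\R$ we have $F(it)=\Exp_{k}(i\lambda, tx)$. Theorem~\ref{thm:mainthm}, applied to $\lambda\in\areg$ at the point $tx\in\a$, gives $|G(it)|=|F(it)|\leq C\,\Omega_{k}^{-1/2}(tx)$.
\item \emph{Growth.} Proposition~\ref{prop:dunklKernel}(4) with $p=1$ gives $|F(z)|\leq C e^{\max_{w}\Re\langle w z\lambda, x\rangle}\leq C e^{|\Re z|\,|\lambda||x|}$. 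So $G$ has at most exponential type in the half-plane and is bounded on horizontal rays. Since $\lambda,x$ lie in the closed chamber, $\max_w\langle w\lambda,x\rangle=\langle\lambda,x\rangle$, so on the positive real axis $G$ is bounded by $C$.
\end{itemize}
Since $G$ has exponential type and is bounded on the boundary of the quarter-planes $\{\Re z\ge 0,\ \Im z\gtrless 0\}$, Phragm\'en--Lindel\"of applies there. At $z=1$ this gives only $|\Exp_k(\lambda,x)|\le C e^{\langle\lambda,x\rangle}$, which lacks the decay factor, so we need a sharper variant.

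The decisive step is to carry the decay $\Omega_k^{-1/2}$ through. The plan is to apply Phragm\'en--Lindel\"of to the weighted function
\[
H(z)\coloneqq G(z)\prod_{\alpha\in R_{+}}(1+z\langle\alpha,x\rangle)^{k_\alpha},
\]
using principal branches, which are holomorphic on $\Re z>0$ because $\langle\alpha,x\rangle\ge 0$.
\begin{itemize}
\item On the imaginary axis, $|1+it\langle\alpha,x\rangle|^{k_\alpha}\asymp(1+|t\langle\alpha,x\rangle|)^{k_\alpha}$ with constants independent of $x$. This factor compensates $\Omega_k^{-1/2}(tx)$ exactly, so $H$ is uniformly bounded there.
\item $H$ still has exponential type, since its extra factor grows only polynomially.
\item What remains is a bound for $H$ on the positive real axis, which is precisely the desired estimate and therefore circular.
\end{itemize}
To avoid this circularity we apply Phragm\'en--Lindel\"of on the sector/half-plane directly: $H$ is bounded on $i\R$ and has exponential type $\le|\lambda||x|$ in $\Re z>0$. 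The standard half-plane version, for instance via the Poisson representation of $\log|H|$ or via Carlson-type bounds, gives
\[
|H(z)|\le \sup_{i\R}|H|\cdot e^{\sigma \Re z},
\]
where $\sigma=\limsup_{r\to\infty} r^{-1}\log|H(r)|$ is the growth indicator along the positive axis.

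The main obstacle is to show $\sigma\le 0$, that is, that $G$ does not grow exponentially along $\R_+$. This holds because $|F(r)|\le Ce^{r\langle\lambda,x\rangle}$ by Proposition~\ref{prop:dunklKernel}(4) together with the chamber identity, and the polynomial factor does not change the exponential type. Hence $|H(1)|\le C'$, with $C'$ depending only on the constant in Theorem~\ref{thm:mainthm}. Evaluating at $z=1$ then yields
\[
|\Exp_k(\lambda,x)|\le C' e^{\langle\lambda,x\rangle}\prod_{\alpha\in R_+}(1+\langle\alpha,x\rangle)^{-k_\alpha}=C'e^{\langle\lambda,x\rangle}\,\Omega_k^{-1/2}(x).
\]
Uniformity in $x$ follows because every constant used depends only on $R$, $k$ and $\lambda$.
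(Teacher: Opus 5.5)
Your proposal is correct and follows essentially the paper's own argument. Both apply Phragm\'en--Lindel\"of on $\{\Re z\geq 0\}$ to the same function $\prod_{\alpha\in R_+}(1+z\langle\alpha,x\rangle)^{k_\alpha}e^{-z\langle\lambda,x\rangle}\Exp_k(\lambda,zx)$, which is bounded on $i\R$ uniformly in $x$ by Theorem~\ref{thm:mainthm} and has growth controlled by Proposition~\ref{prop:dunklKernel}(4).

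The only difference is cosmetic. The paper notes that $\Re\langle w\lambda,zx\rangle\leq \Re z\,\langle\lambda,x\rangle$ on the whole half-plane, so the function has polynomial growth there and the basic Phragm\'en--Lindel\"of theorem applies directly. This makes your indicator version with $\sigma$ and the quarter-plane detour unnecessary, though both are valid.
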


As it turns out, we can extend the approach of \cref{thm:mainthm} by an induction on the degree of $p\in\C[\a]$ and further obtain
\begin{theorem}\label{thm:mainThmDeriv}
  Let $\lambda\in\areg,\, \Re(k)\geq 0,$ and $p\in \C[\a]$. Then there exists a constant $C=C_{R,k,\lambda, p}\geq 0$ such that
  \begin{align*}
    |p(\partial)\Exp_{k}(i\lambda, x)|\leq C\, |\Omega_{k}^{-1/2}(x)|
  \end{align*}
  for all $x\in\a$. Here, $p(\partial)$ is understood to act in the $x$-variable.
\end{theorem}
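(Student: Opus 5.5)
We prove the derivative bound by induction on $\deg p$, re-running the proof of \cref{thm:mainthm} with the Dunkl kernel replaced by a suitable vector of its derivatives. By linearity it suffices to treat monomials $p=\partial_{\xi_1}\cdots\partial_{\xi_m}$. The degree-zero case is \cref{thm:mainthm}.

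For the inductive step we first record how derivatives interact with the eigenvalue equation. For $F=\Exp_k(i\lambda,\cdot)$ the equation $T_\xi F=i\langle\xi,\lambda\rangle F$ gives
\begin{align*}
  \partial_\xi F = i\langle \xi,\lambda\rangle F-\sum_{\alpha\in R_+}k_\alpha\langle\alpha,\xi\rangle\frac{F-F\circ s_\alpha}{\langle\alpha,\cdot\rangle}.
\end{align*}
Differentiating this identity, $p(\partial)F$ for $\deg p=m$ is expressed through derivatives of order $<m$ of $F$ and of $F\circ s_\alpha$, divided by powers of $\langle\alpha,x\rangle$. Away from the hyperplane $\alpha^\perp$ this is harmless: on $\{|\langle\alpha,x\rangle|\ge 1\}$ the difference quotient is bounded by the induction hypothesis, which also covers $F\circ w$ by $W$-equivariance (\cref{prop:dunklKernel}(2)). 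The losses occur only near singular hyperplanes. There $\Omega_k^{-1/2}$ is comparable for $x$ and $s_\alpha x$, but $(F-F\circ s_\alpha)/\langle\alpha,x\rangle$ is a mean-value integral of a first derivative of $F$ along the segment from $s_\alpha x$ to $x$, and this derivative is again of order $m$. So a naive induction is circular.

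To break this circularity we do not use the recursion pointwise. Instead we feed the whole jet into the first-order systems from \cref{sec:ODE}. For each parabolic subgroup $W_I$ used in the proof of \cref{thm:mainthm}, that system is linear in a vector $\Phi=(q(T)F\circ w)_{w,q}$, with $q$ ranging over a basis of $W_I$-harmonics. Its coefficients are smooth in the transversal variables and depend only on $\langle\alpha,x\rangle$ for $\alpha\notin R_I$. We apply $\partial_\eta$ for directions $\eta$ tangent to the ray being integrated, and differentiate the system along the other coordinates. This yields a triangular inhomogeneous system of the form $\Phi_m' = A\Phi_m + B\Phi_{<m}$: its homogeneous part is the same matrix $A$ as before, and its inhomogeneity involves only lower jets.

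The estimates for the homogeneous system, namely the fundamental-matrix bounds producing the decay $\Omega_k^{-1/2}$ along rays, were established in the proof of \cref{thm:mainthm}. Applying variation of constants with those bounds and with the induction hypothesis on $\Phi_{<m}$ then gives the bound for $\Phi_m$. This requires that the coefficient matrix $B$ contribute an integrable factor, plausibly $O(t^{-2})$ as derivatives of terms like $k_\alpha/\langle\alpha,x+t\eta\rangle$. The initial data on the regular facets of the sectors are controlled by \cref{prop:dunklKernel}(4) on compact sets. The intertwining $T_\xi\circ\partial$ versus $\partial\circ T_\xi$ is handled by the commutator formula $[\partial_\eta,T_\xi]=-\sum_\alpha k_\alpha\langle\alpha,\xi\rangle\langle\alpha,\eta\rangle s_\alpha$-type corrections, which are of lower order.

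The main obstacle is the step where the inhomogeneous term must be integrable. Without it, the variation-of-constants integral loses a factor $t$ or worse. I would first check that the derivatives of the system's coefficients decay one order faster. If that fails near singular facets, I would instead differentiate only along directions tangent to the facet, in the variables where the system is regular, and recover the normal derivatives at the end from the recursion above combined with $W_I$-symmetrization.
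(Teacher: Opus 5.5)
Your architecture matches the paper's. You induct on $\deg p$ inside the stratum induction $i+1\to i$, differentiate the first-order system along $x_0+tH_i$ to get $\Psi'=A\Psi+B$ with $B$ built from already-controlled quantities, and reuse the homogeneous estimates. The gap is the step you flag as the main obstacle. As written it is only conditional (``plausibly''), and for the system you invoke, the matrix $A$ of \cref{sec:ODE}, it is false.

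The entries of $A$ are not of the form $k_\alpha/\langle\alpha,x\rangle$. They are $k_\alpha\langle\alpha,H_i\rangle\langle\alpha,x\rangle^{-1}e^{-i\langle\alpha,x\rangle\langle\alpha,g\lambda\rangle}$. When $\partial_\xi$ hits the phase it produces $-ik_\alpha\langle\alpha,H_i\rangle\langle\alpha,\xi\rangle\langle\alpha,g\lambda\rangle\langle\alpha,x\rangle^{-1}e^{-i\langle\alpha,x\rangle\langle\alpha,g\lambda\rangle}$, which is only $O((a+t)^{-1})$ because $\langle\alpha,g\lambda\rangle\neq0$ for regular $\lambda$; this is the term $D_2$ in the paper. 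So $B$ is only $O((a+t)^{-1})\|\Phi(t)\|$ and is not absolutely integrable. Estimating the variation-of-constants integral in norm then loses a factor $\log(1+t/a)$, which destroys uniformity.

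Your fallback does not rescue this. Every direction $\xi$ with $\langle\alpha,\xi\rangle\neq0$ for some $\alpha\in R_+\setminus R_I$ produces this term. Recovering the remaining derivatives from the recursion reintroduces exactly the circularity you pointed out yourself.

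Two smaller inaccuracies. The system of \cref{sec:ODE} is indexed by $w\in W$ alone; an index $q$ is redundant, since $q(T)$ acts on the joint eigenfunction $F\circ w$ by a scalar. Also, for $x_0\in\a^{i+1}$ the initial data do not lie in a compact set. They come from the derivative bound already established on the previous stratum.

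What is missing is a way to exploit the oscillation. The paper integrates by parts (\cref{lem:integrabilityB}). The bad entries are linear combinations of $h(s)$ times entries of $\Phi(s)$, where the scalar $h$ has antiderivative $-\int_t^\infty h(s)\,ds=O((a+t)^{-1})$ by the trigonometric-integral bounds. Since $\Phi'=A\Phi$ and $M=(\mathbbm{1}+Q)^{-1}$ satisfies $M'=-MAM$, the derivative of $M\Phi$ is $-MAM\Phi+MA\Phi=O((a+t)^{-1})\,|\Omega_I^{-1/2}(x_0)|$. After one integration by parts the integrand is therefore $O((a+t)^{-2})\,|\Omega_I^{-1/2}(x_0)|$.

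Alternatively, your instinct of differentiating coefficients ``like $k_\alpha/\langle\alpha,x\rangle$'' does work after a change of gauge. The vector $G=(\omega_I^{1/2}\Exp_k(iw\lambda,\cdot))_{w\in W}$ satisfies $\partial_{H_i}G=(\Gamma+\wt A)G$, with $\Gamma=\diag(i\langle H_i,w\lambda\rangle)_{w}$ and phase-free entries $\wt A_{g,s_\alpha g}=k_\alpha\langle\alpha,H_i\rangle/\langle\alpha,x\rangle$.
\begin{itemize}
\item Its propagator is that of $\Phi'=A\Phi$ conjugated by the unitary matrices $\diag(e^{i\langle x,w\lambda\rangle})_{w}$.
\item It is therefore bounded uniformly in $0\le s\le t$ and in $x_0$. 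This follows from Gronwall for $Z'=(\mathbbm{1}+Q)^{-1}AQZ$ on $[T,\infty[$, where the coefficient is $O((a+t)^{-2})$, Gronwall on $[0,T]$, and \cref{lem:QBound}.
\item Write the equivalent system for $(\Exp_k(iw\lambda,\cdot))_w$ and differentiate it in $\xi$. Then $\Psi=\omega_I^{1/2}\partial_\xi(\Exp_k(iw\lambda,\cdot))_w$ satisfies $\Psi'=(\Gamma+\wt A)\Psi+R$, where $R$ is the $\xi$-derivative of the rational coefficients times $G$.
\item Using $\|G(t)\|\le C|\Omega_I^{-1/2}(x_0)|$ from \cref{lem:bootstrapPhiFinish}, this gives $\|R(t)\|\le C(a+t)^{-2}|\Omega_I^{-1/2}(x_0)|$.
\end{itemize}
Your variation-of-constants argument then closes. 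One of these two devices has to be supplied for the proof to be complete.
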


With some modifications in the proof, we can further extend this result and obtain

\begin{theorem}\label{thm:mainThmComplex}
  Let $\lambda=\lambda_{1}+i\lambda_{2}\in \a+i\areg,\, \Re(k)\geq 0,$ and $p\in \C[\a]$. Then there exists a constant $C=C_{R,k,\lambda, p}\geq 0$ such that
  \begin{align*}
    |p(\partial)\Exp_{k}(\lambda, x)| \leq C\,  e^{\max_{w\in W} \Re \langle w\lambda, x\rangle } |\Omega_{k}^{-1/2}(x)|
  \end{align*}
  for all $x\in\a$. Here, $p(\partial)$ is understood to act in the $x$-variable.
\end{theorem}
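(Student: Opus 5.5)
Since \cref{thm:mainThmDeriv} already gives the case of a purely imaginary spectral parameter, the plan is to rerun the inductive ODE argument behind \cref{thm:mainthm} and \cref{thm:mainThmDeriv}, after first stripping off the expected exponential growth. By \cref{prop:dunklKernel}(2), $\Exp_k(\lambda,wx)=\Exp_k(w^{-1}\lambda,x)$, and $\max_{w}\Re\langle w\lambda,x\rangle$ and $|\Omega_k^{-1/2}|$ are both $W$-invariant in $x$. Derivatives transform covariantly: $p(\partial)$ becomes $(p\circ w)(\partial)$, which is still a polynomial. Hence it suffices to prove the bound for $x\in\weylcham$, uniformly over the finitely many parameters $w\lambda$ and polynomials $p\circ w$. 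For $x\in\weylcham$ the maximum $\max_w\langle w\lambda_1,x\rangle$ is attained at $w$ with $w\lambda_1\in\weylcham$. Replacing $\lambda$ by this $W$-translate (which keeps $\lambda_2$ regular), we may assume $\lambda_1\in\weylcham$. The target is then $|p(\partial)\Exp_k(\lambda,x)|\le C e^{\langle\lambda_1,x\rangle}|\Omega_k^{-1/2}(x)|$ on $\weylcham$.

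Next I set up the same first-order systems as in \cref{sec:ODE}. The vector $F(x)=(\Exp_k(\lambda,wx))_{w\in W}$, together with the derivative data used in \cref{sec:proofDeriv}, satisfies $\partial_\xi F=A_\xi(x)F$ along the sectors attached to parabolic subgroups. The leading part of $A_\xi$ is the diagonal matrix $\mathrm{diag}(\langle \xi, w\lambda\rangle)$, and the remaining terms, which come from the $k_\alpha/\langle\alpha,x\rangle$ reflection parts, decay like $1/t$ along rays. The key point is that, after the diagonalizing change of variables used in the imaginary case, the eigenvalues become $\langle\xi,w\lambda\rangle = \langle\xi,w\lambda_1\rangle + i\langle\xi,w\lambda_2\rangle$. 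I then renormalize, setting $G=e^{-\langle\lambda_1,x\rangle}F$. For $\xi\in\weylcham$ and $\lambda_1\in\weylcham$ we have $\langle \xi,w\lambda_1-\lambda_1\rangle\le 0$ for every $w$. So the real parts of the renormalized eigenvalues are nonpositive, and their imaginary parts $\langle\xi,w\lambda_2\rangle$ are pairwise distinct because $\lambda_2\in\areg$, at least for the directions $\xi$ used in the induction.

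With that in place, the asymptotic analysis of \cref{sec:proofMainThm} should go through. That analysis is a Levinson-type argument: decouple the $1/t$ perturbation to leading order, which is where $\Omega_k^{-1/2}$ comes from via the diagonal $k$-terms, then absorb the $O(t^{-2})$ remainder by Gronwall. In the purely imaginary case the oscillating factors $e^{it(\cdot)}$ have modulus one. Now they become $e^{t(\text{nonpositive real part}+i\cdot)}$, which only helps in the Gronwall estimates. The induction from regular facets to more singular directions proceeds exactly as before, and the derivative bounds follow by the same induction on $\deg p$ used for \cref{thm:mainThmDeriv}, with the extra factor $\lambda$ only changing constants.

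The main obstacle I expect is the decoupling step. There, off-diagonal terms are removed by a transformation whose coefficients involve integrals such as $\int_t^\infty e^{s(\mu_w-\mu_{w'})}s^{-1}\,ds$, or, when the real parts are ordered the other way, integrals starting from the facet. If $\Re(\mu_w-\mu_{w'})\ne0$, these converge with the wrong sign for one ordering, so the integration direction has to be chosen according to the sign of $\Re(\mu_w-\mu_{w'})$, as in the Levinson dichotomy condition. I would first try to check that this dichotomy holds uniformly: either the real parts coincide, in which case the distinct imaginary parts give oscillatory integrals with $O(1/t)$ decay, or they are strictly ordered. This needs care when $\lambda_1$ lies on a wall, and I would handle it by grouping the $w$ according to the stabilizer of $\lambda_1$.
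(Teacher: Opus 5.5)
Your plan is essentially the paper's proof. The paper keeps the real diagonal $\Gamma=\diag(\langle H_i,w\lambda_1\rangle)_{w\in W}$, which is equivalent to your normalization by $e^{-\langle\lambda_1,x\rangle}$ since $\langle H_i,w\lambda_1\rangle\le\langle H_i,\lambda_1\rangle$. It then constructs $Q$ with $Q'=[\Gamma,Q]+A$ and $Q=\mathcal O((a+t)^{-1})$ uniformly in $a$, integrating each entry from $\infty$ or from a finite point according to the sign of $\Gamma_g-\Gamma_h$. That is your Levinson dichotomy, in the Bodine--Lutz form, after which the paper reruns the Gronwall-type bootstrap and the induction on $\deg p$.

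One correction. The numbers $\langle H_i,w\lambda_2\rangle$ need not be pairwise distinct; for singular $H_i$ they coincide on cosets $W_Iw$. Only coupled pairs matter, though: $A_{g,h}\neq0$ forces $h=s_\alpha g$ with $\alpha\in R_+\setminus R_I$, and then the frequency $\langle\alpha,H_i\rangle\langle\alpha,g\lambda_2\rangle$ is nonzero. Likewise, no grouping by the stabilizer of $\lambda_1$ is needed, since the finitely many constants $\Gamma_g$ satisfy the dichotomy automatically.
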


\begin{remark}
  The assumption $\a=\spann_{\R}R$ poses no real restriction. Suppose $\a = \mathfrak{s}^{\perp}\oplus \mathfrak{s}$ with $\mathfrak s =\spann_{\R}R$.
  We can use the same arguments as in \cite[Remark 5]{roslerDeJeu} to see that the Dunkl kernel can be written as
  \begin{align*}
    \Exp^{\a}_{k}(\lambda_{0}+\lambda_{1}, x_{0}+x_{1})=e^{\langle \lambda_{0}, x_{0}\rangle }\Exp^{\mathfrak s}_{k}(\lambda_{1}, x_{1}),
  \end{align*}
  where $\Exp^{\mathfrak s}_{k}$ denotes the Dunkl kernel with respect to $(\mathfrak s, R, k)$ and $x_{0},\lambda_{0} \in \mathfrak{s}^{\perp}, x_{1},\lambda_{1}\in\mathfrak{s}$. Thus we obtain estimates for general reduced root systems by trivial modifications. Absolute continuity of the representing measures, however, can only be expected if the root system is spanning (see loc. cit.).
\end{remark}

\section{A system of order one}\label{sec:ODE}
We now transform the joint eigenvalue problem for the Dunkl kernel into a differential system of order one.

Throughout this section, let $H\in\weylcham$ be fixed. The stabilizer $W_{I}=\{w\in W : wH=H\}\subseteq W$ of $H$ in $W$ defines a standard parabolic subgroup, generated by simple reflections corresponding to a subset of simple roots $I\subseteq\Delta_{+}$. Accordingly, we define $R_{I}=\{\alpha\in R : \langle \alpha, H\rangle =0\}\subseteq R$. Note that $I\subseteq R_{I}$ is a simple positive system and $R_{+}\cap R_{I}$ is a corresponding set of positive roots in $R_{I}$.

We introduce the modified weight function
\begin{align*}
  \omega_{I}(x)\coloneqq \omega_{k, I}(x)\coloneqq\prod_{\alpha\in R_{+}\setminus R_{I}} |\langle \alpha, x\rangle|^{2k_{\alpha}}
\end{align*}
and furthermore define
\begin{align*}
  \Omega_{I}(x) \coloneqq \Omega_{k, I}(x)\coloneqq\prod_{\alpha\in R_{+}\cap R_{I}} (1+|\langle \alpha, x\rangle |)^{2k_{\alpha}}. 
\end{align*}
This $\Omega_{I}$ is complementary to $\omega_{I}$ in the sense that by multiplying them, we (almost) recover $\Omega_{k}$. It is worth noting that
\begin{align}\label{eqn:OmegaX0X}
  \Omega_{I}(x+tH) = \Omega_{I}(x) 
\end{align}
for all $t\in\R$. This will be relevant later.

Since $\langle \alpha, H\rangle =0$ for $\alpha\in R_{I}$, the Dunkl operator into direction $H$ degenerates:
\begin{align}\label{eqn:dunklOpParabolicDegen}
  T_{H} &= \partial_{H} + \sum_{\alpha\in R_{+}\setminus R_{I}} k_{\alpha}\langle \alpha, H\rangle \frac{1-s_{\alpha}}{\langle \alpha, \cdot\rangle }\nonumber\\
        &= \partial_{H} + \frac 1 2 \sum_{\alpha\in R\,\setminus\, R_{I}} k_{\alpha}\langle \alpha, H\rangle \frac{1-s_{\alpha}}{\langle \alpha, \cdot\rangle }.
\end{align}
In particular, this operator is $W_{I}$-invariant, as $W_{I}$ leaves the set $R\setminus R_{I}$ invariant.

We fix a spectral parameter $\lambda\in\a_{\C}$ and define
\begin{align}\label{eqn:defPhiFunction}
  \Phi(x) \coloneqq \Phi^{I,\lambda}(x)\coloneqq\big(\omega_{I}^{1/2}(x)e^{-\langle x, w\lambda\rangle}\Exp_{k}(w\lambda, x)\big)_{w\in W},
\end{align}
which is a differentiable function on
\begin{align*}
  \anreg \coloneqq \{x\in\a : \langle \alpha, x\rangle \neq 0\quad (\forall \alpha\in R\setminus R_{I})\}.
\end{align*}
We shall think of $\Phi$ as a $|W|\times 1$ column vector consisting of the entries 
$$F_{w}(x)\coloneqq F_{w}^{I,\lambda}(x)= \omega_{I}^{1/2}(x)e^{-\langle x, w\lambda\rangle}\Exp_{k}(w\lambda, x)$$
for $w\in W$.

Direct computation as in \cite[Lemma 1]{roslerDeJeu} shows
\begin{lemma}
  We have
  \begin{align*}
    \partial_{H} F_{w}(x) = \sum_{\alpha\in R_{+}\setminus R_{I}} k_{\alpha}\frac{\langle \alpha, H\rangle }{\langle \alpha, x\rangle}  e^{-\langle \alpha, x\rangle \langle \alpha, w\lambda\rangle }F_{s_{\alpha}w}(x) \qquad (x\in\anreg). 
  \end{align*}
\end{lemma}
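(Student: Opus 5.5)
The plan is to differentiate the product $F_{w}=\omega_{I}^{1/2}\,e^{-\langle x, w\lambda\rangle}\Exp_{k}(w\lambda, x)$ along $H$ with the Leibniz rule, and then use the eigenvalue equation $T_{H}\Exp_{k}(w\lambda,\cdot)=\langle H, w\lambda\rangle \Exp_{k}(w\lambda,\cdot)$ in the degenerate form (\ref{eqn:dunklOpParabolicDegen}). There are three factors, and I would compute the derivative of each separately on $\anreg$.

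\emph{Weight factor.} On $\anreg$ the function $\omega_{I}$ is a product of powers $|\langle\alpha,x\rangle|^{2k_\alpha}$ with $\alpha\notin R_{I}$, and each base is nonvanishing there, so
\begin{align*}
\partial_{H}\omega_{I}^{1/2}(x)=\omega_{I}^{1/2}(x)\sum_{\alpha\in R_{+}\setminus R_{I}}k_{\alpha}\frac{\langle\alpha,H\rangle}{\langle\alpha,x\rangle}.
\end{align*}

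\emph{Exponential factor.} Its derivative along $H$ is simply $-\langle H,w\lambda\rangle\,e^{-\langle x,w\lambda\rangle}$.

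\emph{Dunkl kernel factor.} By (\ref{eqn:dunklOpParabolicDegen}) and the eigenvalue equation,
\begin{align*}
\partial_{H}\Exp_{k}(w\lambda,x)=\langle H,w\lambda\rangle\Exp_{k}(w\lambda,x)-\sum_{\alpha\in R_{+}\setminus R_{I}}k_{\alpha}\frac{\langle\alpha,H\rangle}{\langle\alpha,x\rangle}\big(\Exp_{k}(w\lambda,x)-\Exp_{k}(w\lambda,s_{\alpha}x)\big).
\end{align*}

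\emph{Summing.} When the three contributions are added, the $\langle H,w\lambda\rangle$ terms cancel between the exponential and the eigenvalue. The diagonal terms $k_\alpha\frac{\langle\alpha,H\rangle}{\langle\alpha,x\rangle}\Exp_k(w\lambda,x)$ cancel between the weight derivative and the Dunkl term. What remains is
\begin{align*}
\partial_H F_w(x)=\omega_I^{1/2}(x)\,e^{-\langle x,w\lambda\rangle}\sum_{\alpha\in R_+\setminus R_I}k_\alpha\frac{\langle\alpha,H\rangle}{\langle\alpha,x\rangle}\Exp_k(w\lambda,s_\alpha x).
\end{align*}

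\emph{Rewriting as $F_{s_\alpha w}$.} By \cref{prop:dunklKernel}(2), $\Exp_{k}(w\lambda,s_{\alpha}x)=\Exp_{k}(s_{\alpha}w\lambda,x)$. The weight is invariant under the reflection in the sense $\omega_{I}(s_\alpha x)$-independence is not needed here, since the factor $\omega_I^{1/2}(x)$ is already evaluated at $x$. Hence it remains only to match exponentials: $\omega_{I}^{1/2}(x)\Exp_{k}(s_{\alpha}w\lambda,x)=e^{\langle x,s_{\alpha}w\lambda\rangle}F_{s_{\alpha}w}(x)$. Using $\langle x,s_\alpha w\lambda\rangle=\langle x,w\lambda\rangle-\langle\alpha,x\rangle\langle\alpha,w\lambda\rangle$, the prefactor becomes $e^{-\langle x,w\lambda\rangle}e^{\langle x,s_\alpha w\lambda\rangle}=e^{-\langle\alpha,x\rangle\langle\alpha,w\lambda\rangle}$, which gives exactly the claimed formula.

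The only delicate point is the differentiability of $\omega_{I}^{1/2}$ for complex $k$. I would handle it by noting that on each connected component of $\anreg$ every factor $|\langle\alpha,x\rangle|$ equals $\pm\langle\alpha,x\rangle$ with a fixed sign. Thus $|\langle\alpha,x\rangle|^{k_\alpha}=\exp(k_\alpha\log|\langle\alpha,x\rangle|)$ is smooth there, and the logarithmic derivative along $H$ is $k_\alpha\langle\alpha,H\rangle/\langle\alpha,x\rangle$ regardless of the sign.
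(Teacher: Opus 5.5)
Your computation is correct and is exactly the direct computation the paper defers to (\cite[Lemma 1]{roslerDeJeu}). It uses the Leibniz rule, the degenerate form (\ref{eqn:dunklOpParabolicDegen}) of $T_{H}$ together with the eigenvalue equation, and $W$-equivariance $\Exp_{k}(w\lambda, s_{\alpha}x)=\Exp_{k}(s_{\alpha}w\lambda, x)$ to turn the reflected terms into $F_{s_{\alpha}w}$. Your garbled remark that no $s_{\alpha}$-invariance of $\omega_{I}$ is needed is right and worth stating cleanly, since $\omega_{I}$ is only $W_{I}$-invariant, not $W$-invariant as the weight is in \cite{roslerDeJeu}.
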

We thus conclude

\begin{proposition}
  The function $\Phi$ solves the first-order system
  \begin{align}\label{eqn:matrixSystemPhi}
    \partial_{H} \Phi(x) = A(x)\Phi(x), 
  \end{align}
  on $\anreg$, where $A(x)=A^{H,\lambda}(x)=\big(A_{g, h}^{H,\lambda}(x)\big)_{g, h\in W}$ is the $\C^{|W|\times |W|}$-matrix valued function given by
  \begin{align*}
    A^{H,\lambda}_{g, h}(x) \coloneqq\begin{cases}
      k_{\alpha}\frac{\langle \alpha, H\rangle }{\langle \alpha, x\rangle }e^{-\langle \alpha, x\rangle \langle \alpha, g\lambda\rangle} &\text{if}\ h=s_{\alpha}g,\, \alpha\in R_{+}\setminus R_{I},\\
      0 &\text{otherwise.}
    \end{cases}
  \end{align*}
\end{proposition}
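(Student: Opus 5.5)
The plan is to read off the system directly from the preceding lemma. Since $\Phi$ is the column vector $(F_g)_{g\in W}$, the claim $\partial_H\Phi = A\Phi$ just says that for each row index $g\in W$
\begin{align*}
  \partial_H F_g(x) = \sum_{h\in W} A^{H,\lambda}_{g,h}(x)\, F_h(x)\qquad (x\in\anreg).
\end{align*}
The lemma, applied with $w=g$, already expresses $\partial_H F_g$ as a sum over $\alpha\in R_{+}\setminus R_{I}$ of $k_\alpha\frac{\langle\alpha,H\rangle}{\langle\alpha,x\rangle}e^{-\langle\alpha,x\rangle\langle\alpha,g\lambda\rangle}F_{s_\alpha g}(x)$. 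So the work is to re-index this sum by $h = s_\alpha g$.

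\textbf{Step 1: the map $\alpha\mapsto s_\alpha g$ is injective on $R_+$.} If $s_\alpha g = s_\beta g$ for $\alpha,\beta\in R_+$, then $s_\alpha = s_\beta$. Hence $\beta=\pm\alpha$, because $R$ is reduced, and positivity forces $\beta=\alpha$. Consequently, for fixed $g$, each $h\in W$ arises as $s_\alpha g$ for at most one $\alpha\in R_+\setminus R_I$. This makes the case distinction defining $A_{g,h}$ well posed: the root $\alpha$ in the first case is uniquely determined by the pair $(g,h)$.

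\textbf{Step 2: re-index.} The sum from the lemma becomes $\sum_h A_{g,h}F_h$. The terms with $h = s_\alpha g$, $\alpha\in R_+\setminus R_I$, reproduce exactly the summands of the lemma. All other $h$ contribute zero by the definition of $A$.

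\textbf{Step 3: check the domain.} The matrix entries are smooth on $\anreg$, since $\langle\alpha,x\rangle\neq0$ for $\alpha\notin R_I$. Also $\omega_I^{1/2}$ is differentiable there, so the identity holds on $\anreg$.

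There is no real obstacle beyond the lemma itself, whose verification is a direct computation. One writes $\partial_H = T_H - \sum_{\alpha\in R_+\setminus R_I}k_\alpha\langle\alpha,H\rangle\frac{1-s_\alpha}{\langle\alpha,\cdot\rangle}$ using (\ref{eqn:dunklOpParabolicDegen}), and uses the eigenvalue equation $T_H\Exp_k(g\lambda,\cdot)=\langle H,g\lambda\rangle\Exp_k(g\lambda,\cdot)$. The logarithmic derivative of $\omega_I^{1/2}$ is $\sum_{\alpha\in R_+\setminus R_I} k_\alpha\frac{\langle\alpha,H\rangle}{\langle\alpha,x\rangle}$, and it cancels the "$1$" part of the reflection term. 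For the remaining term, note that $\omega_I$ is $s_\alpha$-invariant up to the chosen branch, and that $\Exp_k(g\lambda,s_\alpha x)=\Exp_k(s_\alpha g\lambda,x)$ by \cref{prop:dunklKernel}(2). The reflection part then yields $e^{\langle x,s_\alpha g\lambda\rangle - \langle x, g\lambda\rangle}F_{s_\alpha g}$. Since $s_\alpha g\lambda = g\lambda - \langle\alpha,g\lambda\rangle\alpha$, this exponent equals $-\langle\alpha,x\rangle\langle\alpha,g\lambda\rangle$, which matches the matrix entry.
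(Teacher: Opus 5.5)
Your proposal is correct and follows the paper, which deduces the proposition directly from the preceding lemma by reading off, for each row $g$, the coefficient of $F_{s_\alpha g}$; your injectivity check on $\alpha\mapsto s_\alpha g$ makes explicit that $A_{g,h}$ is well defined. One small correction to your sketch of the lemma: for $\alpha\in R_+\setminus R_I$, $\omega_I$ is in general not $s_\alpha$-invariant, only $W_I$-invariant (e.g.\ for $A_2$ with $R_I=\{\pm\alpha_2\}$, $s_{\alpha_1}$ maps $\alpha_1+\alpha_2$ to $\alpha_2$), but you never need this, since $\omega_I^{1/2}$ is evaluated at the same point $x$ in $\partial_H F_g(x)$ and in $F_{s_\alpha g}(x)$.
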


\begin{remark}
If $H\in\areg$ and $\lambda\in i\areg$, then the system (\ref{eqn:matrixSystemPhi}) coincides with the system of \cite[Corollary 3]{roslerDeJeu}. The authors there used the fact that this system is asymptotically zero in a suitable sense. Thus, by (a variant of) the Levinson theorem (\cite[Thm. 1.11.1]{eastham}), the non-zero solutions $\Phi(x)$ converge to a non-zero constant as $x\to\infty$ in sufficiently regular conical domains. The main conceptual difference in our paper is that, rather than evolving into regular directions, we investigate systems in increasingly singular directed rays. However, the poor integrability of $A$ will force us to further modify the system before we can use inductive arguments. 
\end{remark}

\section{Proof of \cref{thm:mainthm} and \cref{cor:realSpectralParam}}\label{sec:proofMainThm}

Throughout this section, we fix $\lambda\in \areg$ and $\Re(k)\geq 0$. Thus, all constants may a priori depend on $\lambda, k$ and the root system $R$, unless stated otherwise. 

We shall prove the following estimate:
\begin{align}\label{ineq:mainineq}
  \exists C\geq 0: |\Exp_{k}(i\lambda, x)| \leq C\,|\Omega_{k}^{-1/2}(x)|\qquad (\forall x\in\a).
\end{align}
Note that $\Omega_{k}$ is $W$-invariant, and it thus suffices to prove this on $\weylcham$.

Enumerate the simple roots $\Delta_{+}=\{\alpha_{1}, \ldots, \alpha_{n}\}$ and consider the sector
\begin{align*}
  S\coloneqq \{x\in \a : \langle \alpha_{1}, x\rangle \geq \ldots\geq \langle \alpha_{n}, x\rangle \geq 0\}\subseteq\weylcham. 
\end{align*}
For the rest of this section, the sector $S$ is fixed.
As there are only finitely many possible enumerations, and thus only finitely many such sectors $S\subseteq\weylcham$, it suffices to prove
\begin{align}\label{ineq:mainineqSector}
  \exists C\geq 0: |\Exp_{k}(i\lambda, x)| \leq C\,|\Omega_{k}^{-1/2}(x)|\qquad (\forall x\in S).
\end{align}

We consider the following subsets of $S$
\begin{align}\label{eqn:defai}
  \a^{0} \coloneqq \a^{0}_{S} &\coloneqq \{x\in S : \langle \alpha_{j}, x\rangle \leq 1\ (\forall 1\leq j\leq n)\}\\
  \a^{i}\coloneqq \a^{i}_{S} &\coloneqq \{x\in S : \langle \alpha_{1}, x\rangle =\ldots =\langle \alpha_{i}, x\rangle \geq 1\}
\end{align}
for $1\leq i \leq n$. Note that $S= \a^{0}\cup \a^{1}$, thus it suffices to establish (\ref{ineq:mainineqSector}) on $\a^{0}$ and $\a^{1}$ to conclude the main theorem. 
Since $\a^{0}$ is compact, we only need to prove an inequality of the form (\ref{ineq:mainineqSector}) on $\a^{1}$. This will be done via induction $i+1\to i$ from $i=n$ down to $i=1$. We thus define $\a^{n+1}\coloneqq\emptyset$ and consider the induction hypothesis:
\begin{align}\label{ineq:indHypothesis}
  \exists C\geq 0: |\Exp_{k}(i\lambda, x_{0})| \leq C\, |\Omega_{k}^{-1/2}(x_{0})|\quad (\forall x_{0}\in \a^{0}\cup \a^{i+1}).
\end{align}
By compactness of $\a^{0}$, the induction hypothesis is true for $i=n$.

We now perform the induction step, which will occupy the rest of this section. Thus, for the rest of this section, let $1\leq i \leq n$ be fixed.
Define $H_{i}\in S$ by
\begin{align*}
  \langle \alpha_{1}, H_{i}\rangle &=\ldots =\langle \alpha_{i}, H_{i}\rangle =1,\\
  \langle \alpha_{i+1}, H_{i}\rangle &= \ldots =\langle \alpha_{n}, H_{i}\rangle =0.
\end{align*}

Let $x\in \a^{i}$ and define 
\begin{align}\label{eqn:defx0}
   x_{0}\coloneqq x-t(x)H_{i}
\end{align}
with
\begin{align}\label{eqn:deftx}
  \begin{cases}
    t(x)\coloneqq\langle \alpha_{i}, x\rangle -\langle \alpha_{i+1}, x\rangle &\text{if}\ \langle \alpha_{i+1}, x\rangle \geq 1,\\
    t(x)\coloneqq \langle \alpha_{i}, x\rangle-1 &\text{if}\ \langle \alpha_{i+1}, x\rangle\leq 1.
  \end{cases}
\end{align}
In the case $i=n$, we choose the convention $\alpha_{n+1}=0$ and thus define $t(x)\coloneqq\langle \alpha_{n}, x\rangle-1$.
By definition $t(x)\geq 0$ and $x_{0}$ is a projection of $x\in \a^{i}$ into the set $\a^{0}\cup \a^{i+1}$. Indeed, $t(x)$ is chosen such that
\begin{align*}
  \begin{cases}
    \langle \alpha_{i+1}, x\rangle \geq 1 \implies x_{0} \in \a^{i+1},\\
    \langle \alpha_{i+1}, x\rangle \leq 1 \implies x_{0}\in \a^{0}.
  \end{cases}
\end{align*}
In any case, $t(x)$ is chosen in such a way that
\begin{align}\label{eqn:ax0}
  a \coloneqq a(x_{0})\coloneqq \langle \alpha_{1}, x_{0}\rangle  = \ldots = \langle \alpha_{i}, x_{0}\rangle \geq 1.
\end{align}
Thus if $x_{0}\in\a^{0}$, then actually $x_{0}\in \partial\a^{0}$. In particular, if $x\in\a^{n}$, then $x_{0}$ is the unique $x_{0}\in\a$ defined by $\langle \alpha_{1}, x_{0}\rangle =\ldots =\langle \alpha_{n}, x_{0}\rangle =1$. See \cref{fig:A2illustration} for an illustration in the setting $R=A_{2}\subseteq \a = \R^{3}_{0} = \{x\in\R^{3} : x_{1}+x_{2}+x_{3}=0\}$.

\begin{figure}[h]
\begin{tikzpicture}[scale=2.5, >=latex]
    \clip (-1.5,-1) rectangle (2.6,1.8);

    \coordinate (O) at (0,0);
    \coordinate (a1) at (-30:1);       
    \coordinate (a2) at (90:1);     
    \coordinate (a1a2) at (30:1);    
    
    \coordinate (w1) at (0:1/1.414);
    \coordinate (w2) at (60:1/1.414);

    \fill[red!30, opacity=0.3] (0,0) -- (0:4) arc (0:60:4) -- cycle;

    \draw[gray!30] (-1.5,0) -- (1.5,0);
    \draw[gray!30] (0,-1.5) -- (0,1.5);

    \draw[->, thick] (O) -- (a1) node[right] {$\alpha_1$};
    \draw[->, thick] (O) -- (a2) node[above left] {$\alpha_2$};
    \draw[->, thick] (O) -- (a1a2);
    
    \draw[->, thick] (O) -- (150:1);
    \draw[->, thick] (O) -- (270:1);
    \draw[->, thick] (O) -- (210:1);

    \draw[blue, dotted] (-150:1.5) node[below right] {\tiny $\alpha_1(x)=\alpha_2(x)$} -- (30:5) ;

    \draw[red, thick, opacity=0.4] (O) -- (0:4);
    \draw[red, thick, opacity=0.4] (O) -- (60:4);

    \node[red!60!black] at (45:1) {$\overline{\mathfrak a_{+}}$};

    \draw[darkgray, thick, opacity=0.4] (a1a2) -- ($(a1a2)+(-120:1/1.732)$);
    \fill[darkgray!30, opacity=0.3] (0,0) -- ($(a1a2)+(-120:1/1.732)$) -- (a1a2) -- cycle;
    \node[darkgray!60!black, align=center] at (16:0.5) {$\mathfrak a^{0}$};
    
    \fill[blue!20, opacity=0.65] ($(a1a2)+(-120:1/1.732)$)  -- (a1a2) -- ($10*(a1a2)$) -- (0:10) -- cycle;
    \node[blue!60!black, align=center] at (12.5:1.4) {$\mathfrak a^{1}$};

    \draw[green!80!black, thick] (a1a2) -- ($10*(a1a2)$);
    \node[green!65!black, align=center] at (28:2.6) {$\mathfrak a^{2}$};
    
    \coordinate (x) at (20:2.2);
    \node at (x) {\footnotesize$\bullet$};
    \node[above] at (x) {$x$};

    \coordinate (x0) at ($(x)-(0:0.75)$);
    \draw[<-, dotted] (x) -- (x0);
    \node at (x0) {\footnotesize $\bullet$};
    \node[above] at (x0) {$x_{0}$};


    \coordinate (xp) at (2.3,0.1);
    \node at (xp) {\footnotesize$\bullet$};
    \node[above] at (xp) {$x$};

    \coordinate (x0p) at ($(xp)-(0:1.67)$);
    \draw[<-, dotted] (xp) -- (x0p);
    \node at (x0p) {\footnotesize $\bullet$};
    \node[above right] at (x0p) {$x_{0}$};
\end{tikzpicture}
\caption{The sets $\a^{i}$ for $A_{2}\subseteq \R^{3}_{0}$}
\label{fig:A2illustration}

\end{figure}

We now construct the first-order system (\ref{eqn:matrixSystemPhi}) with respect to our fixed $H_{i}\in\weylcham$ and spectral parameter $i\lambda\in i\areg\subseteq\a_{\C}$.
By definition of $H_{i}$, we see that $I=\{\alpha_{i+1}, \ldots, \alpha_{n}\}\subseteq\Delta_{+}$ is the simple positive system generating the root system $R_{I}=\{\alpha\in R : \langle \alpha, H_{i}\rangle =0\}$ and thus the parabolic subgroup $W_{I} = \{w\in W : wH_{i}=H_{i}\}\subseteq W$.

We hence consider the system
\begin{align}\label{eqn:systemHi}
  \partial_{H_{i}} \Phi(x) = A(x)\Phi(x) \qquad (x\in \anreg),
\end{align}
where
\begin{align*}
  \Phi(x)=\Phi^{H_{i},i\lambda}(x) = (\omega_{I}^{1/2}(x)e^{-i\langle x, w\lambda\rangle} \Exp_{k}(iw\lambda, x))_{w\in W},
\end{align*}
and $A=A^{H_{i},i\lambda}$ is defined according to (\ref{eqn:matrixSystemPhi}).

\begin{lemma}\label{lem:estNonParabRoots}
  For each $\alpha\in R_{+}\setminus R_{I}$ there are constants $c_{\alpha}, C_{\alpha} > 0$ such that
  \begin{align*}
    c_{\alpha}(a+t) \leq \langle \alpha, x_{0}+tH_{i}\rangle \leq C_{\alpha}(a+t) \quad\text{for all}\quad t\geq 0.
  \end{align*}
  Here, $a\coloneqq a(x_{0})\coloneqq \langle \alpha_{1}, x_{0}\rangle =\ldots =\langle \alpha_{i}, x_{0}\rangle \geq 1$ (compare \cref{eqn:ax0}).
\end{lemma}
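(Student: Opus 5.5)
Write $\alpha=\sum_{j=1}^{n} m_{j}\alpha_{j}$ with nonnegative integer coefficients $m_{j}$, which is possible since $\alpha\in R_{+}$. We then have
\begin{align*}
\langle \alpha, x_{0}+tH_{i}\rangle = \sum_{j=1}^{n} m_{j}\langle \alpha_{j}, x_{0}\rangle + t\sum_{j=1}^{i} m_{j},
\end{align*}
because $\langle \alpha_{j}, H_{i}\rangle$ equals $1$ for $j\leq i$ and $0$ for $j>i$. The key observation is that $\alpha\notin R_{I}$, so $\alpha$ is not in the span of $I=\{\alpha_{i+1},\ldots,\alpha_{n}\}$. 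Hence $M_{\alpha}\coloneqq\sum_{j\leq i} m_{j}\geq 1$. This is the only place where the hypothesis $\alpha\in R_{+}\setminus R_{I}$ enters.

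For the lower bound, we use that $x_{0}\in S$, so every term $\langle \alpha_{j}, x_{0}\rangle$ is nonnegative. Dropping the terms with $j>i$ and using $\langle \alpha_{j}, x_{0}\rangle = a$ for $j\leq i$ (by \eqref{eqn:ax0}), we get
\begin{align*}
\langle \alpha, x_{0}+tH_{i}\rangle \geq M_{\alpha}(a+t)\geq a+t.
\end{align*}
So $c_{\alpha}=1$ works.

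For the upper bound, we use the ordering built into $S$: $\langle \alpha_{j}, x_{0}\rangle\leq\langle \alpha_{1}, x_{0}\rangle = a$ for all $j$. This gives
\begin{align*}
\langle \alpha, x_{0}+tH_{i}\rangle\leq \Big(\sum_{j=1}^{n} m_{j}\Big)a + M_{\alpha}t\leq \Big(\sum_{j=1}^{n} m_{j}\Big)(a+t),
\end{align*}
so we may take $C_{\alpha}=\sum_{j} m_{j}$, the height of $\alpha$. These constants depend only on $R$, and since $R_{+}$ is finite we may even take them uniform in $\alpha$.

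Before running this, one must check that $x_{0}\in S$ and that the equalities in \eqref{eqn:ax0} hold; these were established in the construction of $x_{0}$. Concretely, $x_{0}\in\a^{0}\cup\a^{i+1}\subseteq S$, and subtracting $t(x)H_{i}$ preserves the equalities among $\langle\alpha_{1},\cdot\rangle,\ldots,\langle\alpha_{i},\cdot\rangle$. There is no genuine obstacle here; the only point needing care is $M_{\alpha}\geq 1$. This follows from the standard fact that $R_{I}$ consists exactly of the roots lying in the span of $I$, equivalently the roots with $\langle \alpha, H_{i}\rangle=M_{\alpha}=0$.
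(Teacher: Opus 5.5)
Your proof is correct and follows essentially the same route as the paper. You expand $\alpha$ in simple roots, use a positive coefficient with index $j\le i$ for the lower bound, and use the ordering $\langle\alpha_j,\cdot\rangle\le\langle\alpha_1,\cdot\rangle$ on $S$ for the upper bound. Your remark that $\sum_{j\le i}m_j=\langle\alpha,H_i\rangle$, which is nonzero directly by the definition of $R_I$, justifies the key step a little more cleanly than the paper's appeal to $I$ being a simple system for $R_I$.
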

\begin{proof}
  Let $\alpha\in R_{+}\setminus R_{I}$.
  Since $\Delta_{+}\subseteq R_{+}$ is a simple subsystem, there are $c_{j}\geq 0$ such that $\alpha = \sum_{j=1}^{n}c_{j}\alpha_{j}$. But $R_{I}$ is a root system with simple system $I=\{\alpha_{i+1},\ldots, \alpha_{n}\}$, thus there exists an index $1\leq j \leq i$ such that $c_{j}>0$. Since $\langle \alpha_{j},x_{0}+tH_{i}\rangle=a+t$, this shows the first inequality. On the other hand, $x_{0}+tH_{i}\in S$ for all $t\geq 0$  and thus
  \begin{align*}
    \langle \alpha, x_{0}+tH_{i}\rangle \leq \Big(\sum_{j=1}^{n}c_{j}\Big) \langle \alpha_{1}, x_{0}+tH_{i}\rangle = \Big(\sum_{j=1}^{n}c_{j}\Big) (a+t). 
  \end{align*}
\end{proof}

As an immediate consequence, observe that $\langle \alpha, x_{0}+tH_{i}\rangle\geq c_{\alpha}>0$ and thus in particular $x_{0}+tH_{i}\in\anreg$ for all $t\geq 0$.

By abuse of notation, we now abbreviate 
\begin{align}\label{def:Phit}
   \Phi(t)\coloneqq \Phi^{H_{i},i\lambda}(x_{0}+tH_{i}) 
\end{align}
and similarly $A(t)=A^{H_{i},i\lambda}(x_{0}+tH_{i})$, so that
\begin{align}\label{eqn:system}
  \frac{d}{dt}\Phi(t)=A(t)\Phi(t)\quad (\forall t\geq 0). 
\end{align}

In analogy to \cite[Prop. 1]{roslerDeJeu}, we obtain
\begin{proposition}\label{prop:AQentries}\
  \begin{enumerate}
    \item  The matrix-valued improper Riemann integral $\int_{0}^{\infty}A(s)\, ds$\ converges, and\\ thus $Q(t)\coloneqq -\int_{t}^{\infty} A(s)\, ds$ is well-defined on $[0, \infty[$.
    \item The entries of $Q$ are linear combinations of terms of the following kind:
      \begin{align*}
        q_{\alpha, w}(t)\coloneqq k_{\alpha} (\pm i\operatorname{si}(\phi_{\alpha, w}(t))-\operatorname{Ci}(\phi_{\alpha, w}(t))),
      \end{align*}
      where
      \begin{align*}
        \phi_{\alpha, w}(t) = \langle \alpha, x_{0}+tH_{i}\rangle |\langle \alpha, w\lambda\rangle |
      \end{align*}
      and $\alpha\in R_{+}\setminus R_{I}$.
      
    \item  $AQ \in L^{1}([0, \infty[,\, \C^{|W|\times |W|})$. In fact, the entries of $AQ$ are linear combinations\\ of terms of the following kind:
      \begin{align*}
        I_{\alpha, \beta, w}(t)\coloneqq k_{\alpha}\frac{\langle \alpha, H_{i}\rangle}{\langle \alpha, x_{0}+tH_{i}\rangle} e^{-i\langle \alpha, x_{0}+tH_{i}\rangle\langle \alpha, w\lambda\rangle}(\pm i \operatorname{si}(\phi_{\beta, s_{\alpha}w}(t))-\operatorname{Ci}(\phi_{\beta, s_{\alpha}w}(t))), 
      \end{align*}
      with $\alpha, \beta\in R_{+}\setminus R_{I}$.
    \end{enumerate}
    Here, $\operatorname{si}(\tau)= -\int_{\tau}^{\infty}\frac{\sin u}{u}\, du$ and $\operatorname{Ci}(\tau)=-\int_{\tau}^{\infty}\frac{\cos u}{u}\, du$ denote trigonometric integrals.
\end{proposition}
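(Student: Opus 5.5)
The plan is to compute everything entrywise, using the explicit form of $A(t)$ along the ray $x_{0}+tH_{i}$. By the formula for $A^{H,\lambda}$ with spectral parameter $i\lambda$, each nonzero entry $A_{g,h}(t)$ (with $h=s_{\alpha}g$, $\alpha\in R_{+}\setminus R_{I}$) equals
\begin{align*}
  k_{\alpha}\frac{\langle \alpha, H_{i}\rangle}{\langle \alpha, x_{0}+tH_{i}\rangle}\, e^{-i\langle \alpha, x_{0}+tH_{i}\rangle\langle \alpha, g\lambda\rangle}.
\end{align*}
Write $u(t)=\langle \alpha, x_{0}+tH_{i}\rangle$. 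This is affine in $t$ with slope $\langle\alpha,H_{i}\rangle>0$, since $\alpha\notin R_{I}$ and $H_{i}\in\weylcham$. By \cref{lem:estNonParabRoots}, $u(t)\geq c_{\alpha}(a+t)>0$. Moreover $\langle \alpha, g\lambda\rangle\neq 0$, because $g\lambda\in\areg$.

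For (1) and (2), substitute $v=u(s)\,|\langle\alpha,g\lambda\rangle|$, so that $ds\,\langle\alpha,H_{i}\rangle/u(s)=dv/v$. This gives
\begin{align*}
  \int_{t}^{\infty}A_{g,h}(s)\,ds=k_{\alpha}\int_{\phi_{\alpha,g}(t)}^{\infty}\frac{e^{\mp iv}}{v}\,dv,
\end{align*}
where the sign is that of $\langle\alpha,g\lambda\rangle$. The right-hand side is an improper Riemann integral that converges by Dirichlet's test, with $\phi_{\alpha,g}(t)\geq c>0$. Splitting $e^{\mp iv}=\cos v\mp i\sin v$ yields $-\operatorname{Ci}(\phi)\pm i\operatorname{si}(\phi)$ up to the overall sign in the definition of $Q$. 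This establishes (1) and identifies each entry of $Q$ as a term $q_{\alpha,g}$. Each entry of $Q$ is in fact a single such term, since $h=s_{\alpha}g$ determines $\alpha$ up to sign.

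For (3), take $(AQ)_{g,h}=\sum_{f}A_{g,f}Q_{f,h}$. The only nonzero contributions come from $f=s_{\alpha}g$ and $h=s_{\beta}f$, which gives exactly the terms $I_{\alpha,\beta,g}$, with $\phi_{\beta,s_{\alpha}g}$ as the argument. For integrability I would use the standard asymptotics $\operatorname{si}(\tau),\operatorname{Ci}(\tau)=O(1/\tau)$ as $\tau\to\infty$, obtained by integrating by parts once: $\int_{\tau}^{\infty}e^{iv}v^{-1}dv=ie^{i\tau}/\tau-i\int_\tau^\infty e^{iv}v^{-2}dv$. On the ray, $\phi_{\beta,s_{\alpha}g}(t)\geq c(a+t)\geq c(1+t)$, so both factors are $O((1+t)^{-1})$ and $|I_{\alpha,\beta,g}(t)|\leq C(1+t)^{-2}$. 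This is integrable on $[0,\infty[$, and the entries are continuous, so $AQ\in L^{1}$.

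There is no serious obstacle. The only points needing care are bookkeeping. First, one must check that $\langle\alpha,g\lambda\rangle\neq0$, so the substitution is legitimate; this is where the hypothesis $\lambda\in\areg$ is used. Second, one must track the sign $\pm$ coming from the sign of $\langle\alpha,g\lambda\rangle$. Third, it is worth noting, for later uniformity in $x_{0}$, that the $L^{1}$ bound depends only on the lower bound $a\geq1$ and not otherwise on $x_{0}$.
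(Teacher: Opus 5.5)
Your proposal is correct and takes essentially the same approach as the paper, which refers to the computation in \cite[Prop.~1]{roslerDeJeu}. That computation is what your substitution reproduces to obtain the trigonometric integrals; the paper then bounds the entries of $Q$ and $AQ$ by $\text{const.}/(a+t)$ and $\text{const.}/(a+t)^{2}$, using $|\operatorname{si}(\tau)|,|\operatorname{Ci}(\tau)|\leq 2/\tau$ and the lower bound of \cref{lem:estNonParabRoots}. Note that your integration by parts actually gives this bound $2/\tau$ for all $\tau>0$, not only asymptotically, and the extra factor $k_{\beta}$ in the entries of $AQ$ is simply absorbed into the linear combinations.
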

\begin{proof}
  This can be proven analogously to \cite[Prop. 1]{roslerDeJeu}, as $a=a(x_{0})\geq 1$. Indeed, using the estimates $|\operatorname{si}(\tau)|\leq 2/\tau, |\operatorname{Ci}(\tau)|\leq 2/\tau$ (see \cite[(3.11)]{roslerDeJeu}) and the lower bound of \cref{lem:estNonParabRoots}, we see that the entries of $Q$ are bounded by $\frac{\text{const.}}{a+t}$ and those of $AQ$ are bounded by $\frac{\text{const.}}{(a+t)^{2}}$.
\end{proof}

We record the estimates from this proof for later use:
\begin{lemma}\label{lem:estNonParabRootsBetter}
  For all $w\in W$ and $\alpha, \beta\in R_{+}\setminus R_{I}$ 
  \begin{enumerate}
    \item    $\displaystyle |q_{\alpha, w}(t)| \leq \frac{\text{const.}}{a+t}.$
    \item  $\displaystyle |I_{\alpha, \beta, w}(t)| \leq \frac{\text{const.}}{(a+t)^{2}}.$
  \end{enumerate}
 \end{lemma}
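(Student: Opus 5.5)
The plan is to combine the elementary bounds on the trigonometric integrals, $|\operatorname{si}(\tau)|\leq 2/\tau$ and $|\operatorname{Ci}(\tau)|\leq 2/\tau$ for $\tau>0$ (see \cite[(3.11)]{roslerDeJeu}), with the two-sided linear growth of the arguments provided by \cref{lem:estNonParabRoots}. The constants will depend only on $R$, $k$ and $\lambda$ (and the fixed sector and index $i$), not on $x_{0}$ or $t$.

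For (1), fix $w\in W$ and $\alpha\in R_{+}\setminus R_{I}$. Since $\lambda\in\areg$ and $w$ permutes the roots, we have $|\langle \alpha, w\lambda\rangle|\geq m\coloneqq\min_{\beta\in R}|\langle \beta,\lambda\rangle|>0$. Together with the lower bound of \cref{lem:estNonParabRoots}, this gives
\begin{align*}
\phi_{\alpha,w}(t)\geq c_{\alpha}m(a+t)>0 .
\end{align*}
Hence
\begin{align*}
|q_{\alpha,w}(t)|\leq |k_{\alpha}|\bigl(|\operatorname{si}(\phi_{\alpha,w}(t))|+|\operatorname{Ci}(\phi_{\alpha,w}(t))|\bigr)\leq \frac{4|k_{\alpha}|}{c_{\alpha}m}\cdot\frac{1}{a+t}.
\end{align*}

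For (2), the exponential factor in $I_{\alpha,\beta,w}$ has modulus one, because its exponent $-i\langle \alpha,x_{0}+tH_{i}\rangle\langle\alpha,w\lambda\rangle$ is purely imaginary. The prefactor $\langle \alpha,H_{i}\rangle/\langle\alpha,x_{0}+tH_{i}\rangle$ is bounded by $|\langle\alpha,H_{i}\rangle|/(c_{\alpha}(a+t))$, again by \cref{lem:estNonParabRoots}. The remaining factor is exactly of the form $q_{\beta,s_{\alpha}w}(t)/k_{\beta}$ and is therefore controlled by the bound $\frac{4}{c_{\beta}m}\cdot\frac{1}{a+t}$ from part (1). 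Multiplying the two bounds gives $\text{const.}/(a+t)^{2}$.

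The only point needing care is uniformity in $x_{0}$. This comes from the fact that \cref{lem:estNonParabRoots} is uniform, with $c_{\alpha}$ independent of $x_{0}$, and that $a\geq 1$, so that all arguments stay away from the singularity of $\operatorname{si}$ and $\operatorname{Ci}$ at $0$. The argument is a routine verification and presents no real obstacle.
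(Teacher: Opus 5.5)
Your proof is correct and follows the paper's argument: the paper combines the bounds $|\operatorname{si}(\tau)|,|\operatorname{Ci}(\tau)|\leq 2/\tau$ with the lower bound $\langle \alpha, x_{0}+tH_{i}\rangle\geq c_{\alpha}(a+t)$ of \cref{lem:estNonParabRoots}, and you additionally make explicit two facts the paper leaves implicit, namely $|\langle\alpha,w\lambda\rangle|\geq\min_{\beta\in R}|\langle\beta,\lambda\rangle|>0$ by regularity of $\lambda$ and the unimodularity of the exponential factor in $I_{\alpha,\beta,w}$. One cosmetic point: writing the bracket as $q_{\beta,s_{\alpha}w}/k_{\beta}$ is undefined when $k_{\beta}=0$, so it is cleaner to bound the bracket directly by $4/(c_{\beta}m(a+t))$.
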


 Now we argue as in the proof of a Levinson-type theorem, Thm. 1.11.1 in \cite{eastham}. This variant of the Levinson theorem immediately implies that $\Phi$ is asymptotically constant, i.e. $\Phi(t)$ converges as $t\to\infty$. However, in order to conclude the induction step, we need more precise knowledge of the limit in terms of an estimate involving $|\Omega^{-1/2}_{I}(x)|$. To this end, we shall reiterate parts of the proof of \cite[1.11.1]{eastham} and incorporate a bootstrap argument from \cite{clercBessel}. This bootstrap argument resembles the proof of the classical Gronwall lemma.

 Throughout the following, $\|\cdot\|$ denotes some fixed matrix norm on $\C^{|W|\times |W|}$.

\begin{lemma}\label{lem:QBound}
  There exists a constant $T\geq 0$ independent of $a=a(x_{0})$, such that $\|Q(t)\|\leq 1/2$ for all $t\geq T$. In particular, $\mathbbm{1}+Q(t)$ is invertible for all $t\geq T$ and $(\mathbbm{1}+Q(t))^{-1}$ is uniformly bounded for $t\geq T$. 
\end{lemma}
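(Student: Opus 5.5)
The plan is to read the bound off directly from \cref{lem:estNonParabRootsBetter}(1). By \cref{prop:AQentries}(2), every entry of $Q(t)$ is a linear combination of terms $q_{\alpha,w}(t)$ with $\alpha\in R_{+}\setminus R_{I}$ and $w\in W$. The coefficients come only from the finitely many $k_{\alpha}$ and $\pm$ signs, and the number of summands per entry is bounded by $|R_{+}|$. None of this depends on $x_{0}$ or $a$.

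I will first check, by looking back at the proof of \cref{prop:AQentries}, that the constants in \cref{lem:estNonParabRootsBetter} are independent of $a=a(x_{0})$. They come from three sources:
\begin{itemize}
\item the universal bounds $|\operatorname{si}(\tau)|,|\operatorname{Ci}(\tau)|\leq 2/\tau$;
\item the lower bound $\phi_{\alpha,w}(t)\geq c_{\alpha}(a+t)\,|\langle\alpha,w\lambda\rangle|$ from \cref{lem:estNonParabRoots}, where $c_{\alpha}$ depends only on the root system and $|\langle\alpha,w\lambda\rangle|>0$ because $\lambda\in\areg$;
\item the values $|k_{\alpha}|$.
\end{itemize}
Hence there is a constant $M$, depending only on $R,k,\lambda$ and the chosen matrix norm, with $\|Q(t)\|\leq M/(a+t)$ for all $t\geq0$. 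Since all norms on the finite-dimensional space $\C^{|W|\times|W|}$ are equivalent, the entrywise bound transfers to $\|\cdot\|$ at the cost of a dimensional constant.

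Since $a\geq1$, we get $\|Q(t)\|\leq M/(1+t)\leq M/t$. Choosing $T\coloneqq 2M$ then gives $\|Q(t)\|\leq1/2$ for all $t\geq T$, uniformly in $x_{0}$. For invertibility, I will assume, as one may after rescaling, that the fixed matrix norm is submultiplicative. The Neumann series $\sum_{m\geq0}(-Q(t))^{m}$ then converges and gives $\|(\mathbbm 1+Q(t))^{-1}\|\leq 1/(1-1/2)=2$ for $t\geq T$. If the fixed norm is not submultiplicative, norm equivalence still yields a uniform bound, after enlarging $T$ by a constant factor so that $\|Q(t)\|$ is small in an operator norm.

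The only real point requiring care is the uniformity in $a$, which rests on the lower bound in \cref{lem:estNonParabRoots} having constants $c_{\alpha}$ independent of $x_{0}$. That lemma is stated with such constants, so no genuine obstacle is expected.
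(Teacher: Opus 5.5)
Your proposal is correct and follows the paper's proof. Both derive $\|Q(t)\|\le \text{const.}/(1+t)$ from \cref{lem:estNonParabRootsBetter}(1) using $a\geq 1$, with a constant depending only on $R,k,\lambda$, and then choose $T$ accordingly; your checks on the $a$-independence of the constants and the Neumann series bound $\|(\mathbbm{1}+Q(t))^{-1}\|\le 2$ make explicit what the paper leaves implicit.
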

\begin{proof}
  Using \cref{lem:estNonParabRootsBetter}(1) and the fact that $a=a(x_{0})\geq 1$, we see that
  \begin{align*}
    \|Q(t)\| \leq \frac{\text{const.}}{1+t},
  \end{align*}
  with the constant depending only on $R, k$ and $\lambda$.
\end{proof}

Recall $\Phi$ from (\ref{def:Phit}) and note that
\begin{align}\label{ineq:OmegakOmegaI}
  |\Omega_{k}^{-1/2}(x_{0})| \leq |\omega_{I}^{-1/2}(x_{0})\Omega_{I}^{-1/2}(x_{0})|.
\end{align}

The induction hypothesis (\ref{ineq:indHypothesis}) thus entails the following:
\begin{lemma}[Good estimate for $\Phi(0)$]\label{lem:indHypothesisPhi}
  There exists a constant $C\geq 0$ such that
  \begin{align*}
    \|\Phi(0)\| \leq C\, |\Omega_{I}^{-1/2}(x_{0})|. 
  \end{align*}
\end{lemma}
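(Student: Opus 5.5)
We need to bound $\Phi(0)$, whose entries are $F_w(x_0)=\omega_I^{1/2}(x_0)e^{-i\langle x_0,w\lambda\rangle}\Exp_k(iw\lambda,x_0)$ for $w\in W$. Since $\lambda$ is real, $|e^{-i\langle x_0,w\lambda\rangle}|=1$. Since $\Re k\geq 0$, we have $|\omega_I^{1/2}(x_0)|=\prod_{\alpha\in R_+\setminus R_I}|\langle\alpha,x_0\rangle|^{\Re k_\alpha}$. So it suffices to show, for each $w\in W$,
\begin{align*}
  |\omega_I^{1/2}(x_0)|\,|\Exp_k(iw\lambda,x_0)| \leq C\,|\Omega_I^{-1/2}(x_0)|,
\end{align*}
with $C$ independent of $x_0\in\a^0\cup\a^{i+1}$ satisfying (\ref{eqn:ax0}). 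Note that the relevant base points are exactly those of the form $x_0=x-t(x)H_i$ for $x\in\a^i$.

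The first step is to apply the induction hypothesis (\ref{ineq:indHypothesis}) to $\Exp_k(iw\lambda,\cdot)$. The hypothesis is stated for $\lambda$, but the whole argument is carried out for an arbitrary fixed regular $\lambda$, so I interpret the induction as running simultaneously for the finitely many regular parameters $w\lambda$, $w\in W$. Alternatively, by \cref{prop:dunklKernel}(2) and $W$-invariance of $\Omega_k$, we have $\Exp_k(iw\lambda,x_0)=\Exp_k(i\lambda,w^{-1}x_0)$ and $\Omega_k(w^{-1}x_0)=\Omega_k(x_0)$. However, $w^{-1}x_0$ need not lie in $S$, so the simultaneous formulation is the cleaner route. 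This step yields $|\Exp_k(iw\lambda,x_0)|\leq C|\Omega_k^{-1/2}(x_0)|$.

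The second step is to combine this with (\ref{ineq:OmegakOmegaI}), giving
\begin{align*}
  |\Exp_k(iw\lambda,x_0)|\leq C\,|\omega_I^{-1/2}(x_0)\Omega_I^{-1/2}(x_0)|,
\end{align*}
and then multiply by $|\omega_I^{1/2}(x_0)|$, whose moduli cancel. Inequality (\ref{ineq:OmegakOmegaI}) itself holds for the following reason. The factors of $\Omega_k$ indexed by $R_+\cap R_I$ give exactly $\Omega_I$. Each remaining factor satisfies $|(1+|\langle\alpha,x_0\rangle|)^{-k_\alpha}|=(1+|\langle\alpha,x_0\rangle|)^{-\Re k_\alpha}\leq|\langle\alpha,x_0\rangle|^{-\Re k_\alpha}$, using $\Re k_\alpha\geq0$. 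By \cref{lem:estNonParabRoots} with $t=0$, $\langle\alpha,x_0\rangle\geq c_\alpha a>0$, so the right-hand side is finite.

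Taking the maximum over the finitely many $w$ and using equivalence of norms on $\C^{|W|}$ then gives the claim. The only delicate point is the bookkeeping in the first step: making sure the induction hypothesis covers all $w\lambda$, not just $\lambda$. Everything else is the elementary inequality above.
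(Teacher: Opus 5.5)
Your proof is correct and follows the paper's argument exactly: apply the induction hypothesis (\ref{ineq:indHypothesis}) and then the elementary inequality (\ref{ineq:OmegakOmegaI}), after which $|\omega_I^{1/2}(x_0)|$ cancels. Your remark that the hypothesis must be read simultaneously for all $w\lambda$, $w\in W$, makes explicit a point the paper leaves implicit; this reading is legitimate because the induction step bounds every entry of $\Phi(t)$, not just the first (as the paper says outright in the complex case), so the strengthened hypothesis reproduces itself.
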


Our next goal is to employ the operators $\mathbbm{1}+Q(t)$ to transform the system (\ref{eqn:system}) and improve the decay properties of the entries in the coefficient matrix, as seen in \cref{lem:estNonParabRootsBetter}(2) compared to \cref{lem:estNonParabRootsBetter}(1). However, since these operators become invertible only for $t\geq T$, we first have to extend \cref{lem:indHypothesisPhi} to obtain a bound on $\displaystyle\sup_{t\in [0, T]}\|\Phi(t)\|$.

\begin{lemma}\label{lem:OmegaNesting}
  There is a constant $c>0$ such that
  \begin{align*}
    c a^{-m_{I}} \leq |\Omega_{I}^{-1/2}(x_{0})| \leq 1 \leq a.
  \end{align*}
  Here, $a\coloneqq a(x_{0})\geq 1$ as before and\ \ $m_{I}\coloneqq \sum_{\alpha\in R_{+}\cap R_{I}}\Re(k_{\alpha})$.
\end{lemma}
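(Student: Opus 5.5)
The plan is to prove the three inequalities separately; each comes from controlling $|\langle \alpha, x_{0}\rangle|$ for the roots $\alpha\in R_{+}\cap R_{I}$.

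\emph{Middle inequality.} Write $|\Omega_{I}^{-1/2}(x_{0})| = \prod_{\alpha\in R_{+}\cap R_{I}} (1+|\langle \alpha, x_{0}\rangle|)^{-\Re(k_{\alpha})}$. This holds because for a positive real base $b$ and complex exponent $z$ we have $|b^{z}|=b^{\Re z}$. Each base satisfies $1+|\langle \alpha, x_{0}\rangle|\geq 1$, and each exponent satisfies $\Re(k_{\alpha})\geq 0$. Hence every factor is at most $1$, and so is the product.

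\emph{Last inequality.} The bound $1\leq a$ is exactly (\ref{eqn:ax0}).

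\emph{First inequality.} This is the one real step. I will bound $|\langle \alpha, x_{0}\rangle|\leq C' a$ for every $\alpha\in R_{+}\cap R_{I}$, with $C'$ depending only on $R$. Since $x_{0}\in\a^{0}\cup\a^{i+1}\subseteq S$, we have
\[
  \langle \alpha_{1}, x_{0}\rangle\geq \langle \alpha_{j}, x_{0}\rangle\geq 0 \quad\text{for all } j.
\]
By (\ref{eqn:ax0}), $\langle \alpha_{1}, x_{0}\rangle = a$. Now write a positive root as $\alpha=\sum_{j} c_{j}\alpha_{j}$ with $c_{j}\geq 0$. Then, exactly as in the proof of \cref{lem:estNonParabRoots},
\[
  0\leq \langle \alpha, x_{0}\rangle \leq \Big(\sum_{j} c_{j}\Big)\, a .
\]
Let $C'$ be the maximum of $\sum_{j} c_{j}$ over the finitely many positive roots. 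Using $a\geq 1$, we get
\[
  1+|\langle \alpha, x_{0}\rangle| \leq 1+C'a \leq (1+C')\,a .
\]
Raising this to the power $-\Re(k_{\alpha})\leq 0$ reverses the inequality:
\[
  (1+|\langle \alpha, x_{0}\rangle|)^{-\Re(k_{\alpha})} \geq (1+C')^{-\Re(k_{\alpha})}\, a^{-\Re(k_{\alpha})}.
\]
Multiplying over $\alpha\in R_{+}\cap R_{I}$ gives $|\Omega_{I}^{-1/2}(x_{0})|\geq c\, a^{-m_{I}}$ with $c=(1+C')^{-m_{I}}>0$.

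The only point needing care is that the bound $\langle \alpha, x_{0}\rangle\leq C'a$ must hold uniformly in $x_{0}$. This holds because membership in the sector $S$ makes $\alpha_{1}$ dominate all the simple roots, and $\langle\alpha_{1},x_{0}\rangle$ equals $a$. The constant therefore depends only on $R$ and $k$.
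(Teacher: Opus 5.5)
Your proof is correct and follows the paper's argument essentially verbatim. You get the upper bound because each factor $(1+\langle\alpha,x_{0}\rangle)^{-\Re(k_{\alpha})}$ is at most $1$, and the lower bound from $\langle\alpha,x_{0}\rangle\leq C_{\alpha}a$, which follows from $x_{0}\in S$, $\langle\alpha_{1},x_{0}\rangle=a$ and $a\geq 1$; the only cosmetic difference is that the paper phrases this as an upper bound $|\Omega_{I}(x_{0})|\leq \text{const.}\cdot a^{2m_{I}}$.
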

\begin{proof}
First, note that
\begin{align*}
  |\Omega_{I}^{-1/2}(x_{0})| = \prod_{\alpha\in R_{+}\cap R_{I}}\frac{1}{(1+\langle \alpha, x_{0}\rangle)^{\Re(k_{\alpha})}}\leq 1. 
\end{align*}
On the other hand, we have $\langle \alpha, x_{0}\rangle \leq \text{const.}\cdot a$ for any $\alpha\in R_{+}$. Indeed, we can write $\alpha\in R_{+}$ as $\alpha=\sum_{j=1}^{n}c_{j}\alpha_{j}$ with $c_{j}\geq 0$. As $x_{0}\in S$, we have 
\begin{align*}
   \langle \alpha, x_{0}\rangle\ \leq\ \sum_{j=1}^{n}c_{j}\langle \alpha_{1}, x_{0}\rangle = C_{\alpha}a.
\end{align*}
Thus
\begin{align*}
  |\Omega_{I}(x_{0})| \leq \prod_{\alpha\in R_{+}\cap R_{I}} (1+C_{\alpha}a)^{2\Re(k_{\alpha})} \leq \text{const.}\cdot a^{2m_{I}}. 
\end{align*}
\end{proof}

In order to extend the bound of \cref{lem:indHypothesisPhi} to $\|\Phi(t)\|$ with $t\in [0, T]$, we use a bootstrap argument. To this end, we start with a very rough initial estimate on $\Phi$, which will be used later again.
\begin{lemma}[Initial estimate for $\Phi(t)$]\label{lem:initialEst}
  There is a constant $C\geq 0$ such that
  \begin{align*}
    \|\Phi(t)\|\leq C(a+t)^{\mu_{I}}\qquad (\forall t\geq 0).
  \end{align*}
  Here, $a=a(x_{0})\geq 1$ as before and\ \ $\mu_{I}\coloneqq \sum_{\alpha\in R_{+}\setminus R_{I}} \Re(k_{\alpha})$.
\end{lemma}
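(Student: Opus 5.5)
This estimate should follow directly from the crude bound of \cref{prop:dunklKernel}(4), applied with $p=1$ entrywise to the definition of $\Phi$. There is no need to use the differential system at all. Fix $w\in W$ and set $x=x_{0}+tH_{i}$. The spectral parameter $iw\lambda$ is purely imaginary and $x\in\a$ is real, so $\Re\langle v\,iw\lambda, x\rangle=0$ for every $v\in W$. Hence \cref{prop:dunklKernel}(4) with $p=1$ gives $|\Exp_{k}(iw\lambda, x)|\leq C$, with $C$ depending only on $R,k$ (and not on $\lambda$ or $x$). Similarly $|e^{-i\langle x, w\lambda\rangle}|=1$. Consequently
\begin{align*}
  |F_{w}(t)| \leq C\, |\omega_{I}^{1/2}(x_{0}+tH_{i})| = C\prod_{\alpha\in R_{+}\setminus R_{I}} |\langle \alpha, x_{0}+tH_{i}\rangle|^{\Re(k_{\alpha})}.
\end{align*}

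Next I would bound each factor using the upper estimate of \cref{lem:estNonParabRoots}. For $\alpha\in R_{+}\setminus R_{I}$ and $t\geq 0$ it gives $0<\langle \alpha, x_{0}+tH_{i}\rangle\leq C_{\alpha}(a+t)$. Since $\Re(k_{\alpha})\geq 0$, it follows that $|\langle \alpha, x_{0}+tH_{i}\rangle|^{\Re(k_{\alpha})}\leq C_{\alpha}^{\Re(k_{\alpha})}(a+t)^{\Re(k_{\alpha})}$; when $\Re(k_{\alpha})=0$ the factor is simply $1$. Multiplying over $\alpha\in R_{+}\setminus R_{I}$ yields $|F_{w}(t)|\leq C'(a+t)^{\mu_{I}}$, where $C'$ is independent of $x_{0}$ and $t$. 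Finally, all norms on $\C^{|W|}$ are equivalent, and the fixed matrix norm restricted to column vectors is one of them. So, after enlarging the constant, $\|\Phi(t)\|\leq C(a+t)^{\mu_{I}}$.

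The only point requiring care is uniformity: the constant must not depend on $x_{0}$ (equivalently on $a$) or on $t$. This holds because the constant in \cref{prop:dunklKernel}(4) is global, and the constants $C_{\alpha}$ in \cref{lem:estNonParabRoots} depend only on the root system and the sector $S$. I do not expect a genuine obstacle here; the estimate is deliberately rough and is meant only as the starting point for the subsequent bootstrap.
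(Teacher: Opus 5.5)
Your proposal is correct and follows the paper's own proof: since the spectral parameter is purely imaginary, \cref{prop:dunklKernel}(4) bounds each entry of $\Phi(t)$ by a constant times $|\omega_{I}^{1/2}(x_{0}+tH_{i})|=\prod_{\alpha\in R_{+}\setminus R_{I}}|\langle \alpha, x_{0}+tH_{i}\rangle|^{\Re(k_{\alpha})}$. The upper bound of \cref{lem:estNonParabRoots} then yields $(a+t)^{\mu_{I}}$ with a constant independent of $a$ and $t$.
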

\begin{proof}
  This is an immediate consequence of \cref{prop:dunklKernel}(4) and the upper bound of \cref{lem:estNonParabRoots}. Indeed, note that
  \begin{align*}
    \|\Phi(t)\|=\|\Phi(x_{0}+tH_{i})\|\, \leq\, |\omega_{I}^{1/2}(x_{0}+tH_{i})|\ =\prod_{\alpha\in R_{+}\setminus R_{I}} |\langle \alpha, x_{0}+tH_{i}\rangle|^{\Re\, k_{\alpha}}. 
  \end{align*}
\end{proof}

This rough estimate will now be successively improved until we obtain:

\begin{lemma}[Good estimate for $\Phi(t)$ for $0\leq t\leq T$]\label{lem:indHypothesisPhiExtended}   
  There is a constant $C\geq 0$ such that
\begin{align*}
  \|\Phi(t)\| \leq C\, |\Omega_{I}^{-1/2}(x_{0})| 
\end{align*}
for all $t\in [0, T]$.
\end{lemma}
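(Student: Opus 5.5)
Integrating the system (\ref{eqn:system}) from $0$ gives
\begin{align*}
  \Phi(t)=\Phi(0)+\int_{0}^{t}A(s)\Phi(s)\,ds \qquad (0\leq t\leq T).
\end{align*}
The plan is to bootstrap, starting from the rough bound of \cref{lem:initialEst} and improving by a fixed power of $a$ at each step. By \cref{lem:estNonParabRoots}, every entry of $A(s)$ is bounded in modulus by $|k_{\alpha}|\langle\alpha,H_{i}\rangle/\langle \alpha, x_{0}+sH_{i}\rangle\leq \text{const.}/(a+s)$. Note that $\langle \alpha, x_0+sH_i\rangle$ is real, and the exponential factor has modulus one because the spectral parameter $i\lambda$ is purely imaginary. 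Hence $\|A(s)\|\leq C_{1}/a$ for all $s\in[0,T]$, where $C_{1}$ is independent of $a$ because $T$ is.

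For the bootstrap, suppose we know $\|\Phi(s)\|\leq B$ on $[0,T]$. Then
\begin{align*}
  \|\Phi(t)\|\leq \|\Phi(0)\|+ T\frac{C_{1}}{a}B \leq C|\Omega_{I}^{-1/2}(x_{0})| + C_{2}\frac{B}{a}\qquad (t\in[0,T]),
\end{align*}
where the bound on $\|\Phi(0)\|$ comes from \cref{lem:indHypothesisPhi}. We start with $B_{0}=C(a+T)^{\mu_{I}}\leq C' a^{\mu_{I}}$, using $a\geq 1$. After $N$ iterations we get
\begin{align*}
  \|\Phi(t)\|\leq C_{N}\big(|\Omega_{I}^{-1/2}(x_{0})|+a^{\mu_{I}-N}\big).
\end{align*}
Here the terms of the form $|\Omega_I^{-1/2}(x_0)|\,a^{-j}$ generated along the way are absorbed into $|\Omega_{I}^{-1/2}(x_{0})|$ using $a\geq1$. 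We choose $N\geq \mu_{I}+m_{I}$. Then \cref{lem:OmegaNesting} gives $a^{\mu_{I}-N}\leq a^{-m_{I}}\leq c^{-1}|\Omega_{I}^{-1/2}(x_{0})|$, which is the claim. Only finitely many steps are needed, so the constants stay controlled.

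Alternatively, one could apply Gronwall directly to get $\|\Phi(t)\|\leq \|\Phi(0)\|e^{\int_0^T\|A\|}\leq e^{C_1T}\|\Phi(0)\|$. That would be even simpler, since $\int_0^T\|A(s)\|ds\leq C_1T$ is bounded uniformly in $a$. I would mention this as a remark, but follow the bootstrap to stay in line with the paper's announced strategy.

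The main point requiring care is uniformity in $a=a(x_{0})$, i.e.\ in $x_{0}$. This requires three things: that $T$ is independent of $a$ (\cref{lem:QBound}); that $\|A(s)\|\lesssim 1/(a+s)$ with constants depending only on $R,k,\lambda$ (\cref{lem:estNonParabRoots}); and that the initial polynomial bound can be beaten by the lower bound on $|\Omega_I^{-1/2}(x_0)|$ from \cref{lem:OmegaNesting}.
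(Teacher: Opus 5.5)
Your bootstrap is correct and is essentially the paper's own proof: you integrate the system, bound $\|A(s)\|\lesssim 1/a$ uniformly in $a$, start from the rough bound of \cref{lem:initialEst}, and lower the exponent of $a$ finitely many times until \cref{lem:OmegaNesting} absorbs the remainder. Your Gronwall remark is also valid, since $\int_0^T\|A(s)\|\,ds\leq C_1T$ uniformly in $a$; it would make both the initial estimate and the lower bound on $|\Omega_I^{-1/2}(x_0)|$ unnecessary for this lemma.
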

\begin{proof}
   As a consequence of the lower bound in \cref{lem:estNonParabRoots}, we see that the entries of $A(t)$ are bounded by $\frac{\text{const.}}{a}$. Now 
   \begin{align*}
      \Phi(t)=\Phi(0)+\int_{0}^{t}A(s)\Phi(s)\, ds,
   \end{align*}
   and using \cref{lem:indHypothesisPhi} and \cref{lem:initialEst}, we get for $t\in [0, T]$
\begin{align}\label{ineq:hdiIneq}
  \|\Phi(t)\| &\leq \|\Phi(0)\| +\int_{0}^{t} \|A(s)\|\|\Phi(s)\| \ ds\\
              &\leq\text{const.}\cdot|\Omega_{I}^{-1/2}(x_{0})| + \text{const.}\cdot\int_{0}^{t} \frac{(a+s)^{\mu_{I}}}{a} ds\nonumber\\
              &\leq \text{const.}\cdot \left(|\Omega_{I}^{-1/2}(x_{0})| + \frac{T(a+T)^{\mu_{I}}}{a}\right)\nonumber\\
              &\leq \text{const.}\cdot \left(|\Omega_{I}^{-1/2}(x_{0})| + a^{\mu_{I}-1}\right)\nonumber.
\end{align}
Recall that $T$ was independent of $a=a(x_{0})\geq 1$ and is thus treated as a constant. Using this new estimate in \cref{ineq:hdiIneq}, we further obtain
\begin{align*}
  \|\Phi(t)\| &\leq \text{const.}\cdot|\Omega_{I}^{-1/2}(x_{0})| + \text{const.}\cdot\int_{0}^{t} \frac{|\Omega_{I}^{-1/2}(x_{0})|+a^{\mu_{I}-1}}{a} ds\\
              &\leq \text{const.}\cdot (|\Omega_{I}^{-1/2}(x_{0})| + a^{\mu_{I}-2}).
\end{align*}
Iterating in this fashion, we eventually reach
\begin{align*}
  \|\Phi(t)\| \leq \text{const.}\cdot (|\Omega_{I}^{-1/2}(x_{0})| + a^{\nu}) 
\end{align*}
for $t\in [0, T]$, with some $\nu \leq -m_{I}$. Thus using the lower bound of \cref{lem:OmegaNesting} we obtain the lemma.
\end{proof}

To obtain bounds for $t\geq T$, we have to modify the system to guarantee better integrability.
Since $\mathbbm{1}+Q(t)$ is invertible for $t\geq T$, we may now consider the transformation 
\begin{align}
  Z(t)\coloneqq (\mathbbm{1}+Q(t))^{-1}\Phi(t) 
\end{align}
for $t\geq T$.

We perform a simple calculation, as in Eq. (1.11.6) in the proof of \cite[Thm. 1.11.1]{eastham} with $\Lambda = 0$ therein.
Recall that $Q'(t)=A(t)$. By differentiating the term $\Phi(t)=(\mathbbm{1}+Q(t))Z(t)$ and using the system (\ref{eqn:system}), we obtain
\begin{align}\label{eqn:derivationZtransform}
  A(t)Z(t)+(\mathbbm{1}+Q(t))Z'(t) = \Phi'(t) = A(t)(\mathbbm{1}+Q(t))Z(t). 
\end{align}
Rearranging terms, we thus conclude
\begin{align}\label{eqn:systemMod}
  \frac{d}{dt} Z(t) = (\mathbbm{1}+Q(t))^{-1} A(t)Q(t) Z(t)
\end{align}
for $t\geq T$. 

The major advantage of this system, in contrast to the system (\ref{eqn:system}), is that, by \cref{lem:estNonParabRootsBetter} and \cref{lem:QBound}, we obtain absolutely integrable entries in the coefficient matrix on the right side of (\ref{eqn:systemMod}). This allows for the following bootstrap argument.

\begin{lemma}[Initial estimate for $Z(t)$]\label{lem:initialEstMod}
  We keep the notation from \cref{lem:initialEst}. There is a constant $C\geq 0$ such that
  \begin{align*}
    \|Z(t)\|\leq C(a+t)^{\mu_{I}}\qquad (\forall t\geq T).
  \end{align*}
\end{lemma}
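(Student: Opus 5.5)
This follows directly from the definition $Z(t)=(\mathbbm{1}+Q(t))^{-1}\Phi(t)$ for $t\geq T$, combined with \cref{lem:QBound} and \cref{lem:initialEst}. The plan is to bound the operator norm of $(\mathbbm{1}+Q(t))^{-1}$ uniformly in $t\geq T$ and in $x_{0}$, and then to transfer the polynomial bound on $\Phi$ to $Z$.

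First, by \cref{lem:QBound}, for $t\geq T$ we have $\|Q(t)\|\leq 1/2$. Here $T$ is independent of $a=a(x_{0})$, since that lemma's estimate $\|Q(t)\|\leq \text{const.}/(1+t)$ uses only $a\geq 1$. If $\|\cdot\|$ is submultiplicative, the Neumann series gives
\begin{align*}
  \|(\mathbbm{1}+Q(t))^{-1}\| \leq \sum_{m\geq 0}\|Q(t)\|^{m}\leq 2\qquad (t\geq T).
\end{align*}
If the fixed matrix norm is not submultiplicative, I pass to an equivalent operator norm; this only changes the constants, which are harmless. The resulting bound depends only on $R$, $k$ and $\lambda$, not on $x_{0}$.

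Second, I estimate $\|Z(t)\|\leq \|(\mathbbm{1}+Q(t))^{-1}\|\,\|\Phi(t)\|\leq 2\,\|\Phi(t)\|$. Here the vector norm on $\C^{|W|}$ is taken compatible with the matrix norm, up to equivalence constants. Inserting \cref{lem:initialEst} then gives $\|Z(t)\|\leq 2C(a+t)^{\mu_{I}}$ for all $t\geq T$, with a constant independent of $a$ and $x_{0}$, which is the claim.

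The only point needing care, which is mild, is this uniformity: the constant must not depend on $x_{0}$, because the later bootstrap argument for $Z$ will be applied uniformly over $x\in\a^{i}$. This holds because both \cref{lem:QBound} and \cref{lem:initialEst} already provide constants depending only on $R$, $k$ and $\lambda$. The latter in turn rests on \cref{prop:dunklKernel}(4) with $x=i\lambda$ fixed, and on the upper bound in \cref{lem:estNonParabRoots}.
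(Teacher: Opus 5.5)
Your proposal is correct and matches the paper's argument: the paper also obtains this bound directly by combining the uniform bound on $(\mathbbm{1}+Q(t))^{-1}$ for $t\geq T$ from \cref{lem:QBound} with the initial estimate on $\Phi$ from \cref{lem:initialEst}. Your remarks on the Neumann series and on the independence of the constant from $x_{0}$ simply spell out what the paper leaves implicit.
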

\begin{proof}
  This is an immediate consequence of \cref{lem:initialEst} and the uniform bound on $(\mathbbm{1}+Q(t))^{-1}$ obtained in \cref{lem:QBound}.
\end{proof}

\begin{lemma}[Good estimate for $Z(t)$ for $t\geq T$]\label{lem:bootstrapZ}
   There is a constant $C\geq 0$ such that
   \begin{align*}
     \|Z(t)\|\leq C\cdot |\Omega_{I}^{-1/2}(x_{0})|\qquad (\forall t\geq T). 
   \end{align*}
\end{lemma}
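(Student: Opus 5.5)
Integrate the modified system (\ref{eqn:systemMod}) from $T$ and bootstrap, as in the proof of \cref{lem:indHypothesisPhiExtended}, but now with an integrable coefficient. Write $B(t)\coloneqq(\mathbbm{1}+Q(t))^{-1}A(t)Q(t)$. By \cref{lem:QBound}, $(\mathbbm{1}+Q(t))^{-1}$ is uniformly bounded for $t\geq T$. By \cref{prop:AQentries}(3) and \cref{lem:estNonParabRootsBetter}(2), the entries of $AQ$ are bounded by $\text{const.}/(a+t)^{2}$. Hence
\begin{align*}
  \|B(t)\|\leq \frac{C_{1}}{(a+t)^{2}}\qquad (t\geq T),
\end{align*}
with $C_{1}$ independent of $a=a(x_{0})\geq 1$. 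For the starting value, $Z(T)=(\mathbbm{1}+Q(T))^{-1}\Phi(T)$, and \cref{lem:indHypothesisPhiExtended} gives $\|Z(T)\|\leq C_{0}|\Omega_{I}^{-1/2}(x_{0})|$.

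Next, use the integral equation $Z(t)=Z(T)+\int_{T}^{t}B(s)Z(s)\,ds$ and insert the initial estimate $\|Z(s)\|\leq C(a+s)^{\mu_{I}}$ from \cref{lem:initialEstMod}. This gives
\begin{align*}
  \|Z(t)\|\leq C_{0}|\Omega_{I}^{-1/2}(x_{0})| + C\int_{T}^{t}(a+s)^{\mu_{I}-2}\,ds .
\end{align*}
If $\mu_{I}<1$, the integral is at most $\text{const.}\cdot a^{\mu_{I}-1}$ uniformly in $t$. Otherwise the bound grows like $(a+t)^{\mu_{I}-1}$ (or logarithmically when $\mu_{I}=1$), which is still an improvement by one power of $(a+t)$. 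Feed this back in repeatedly: each pass lowers the exponent by one, so the bound always has the form $\text{const.}\cdot(|\Omega_{I}^{-1/2}(x_{0})|+(a+t)^{\nu})$. The logarithmic borderline case can be avoided by perturbing the exponent slightly, e.g. replacing $\mu_{I}$ by $\mu_{I}+\tfrac12$ at the start. After finitely many passes $\nu<0$. One more pass then gives $\|Z(t)\|\leq \text{const.}\cdot(|\Omega_{I}^{-1/2}(x_{0})|+a^{\nu'})$ uniformly in $t\geq T$, since $\int_{T}^{\infty}(a+s)^{\nu-2}ds\leq\text{const.}\cdot a^{\nu-1}$. Continue until $\nu'\leq -m_{I}$.

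Finally, the lower bound of \cref{lem:OmegaNesting}, $a^{-m_{I}}\leq c^{-1}|\Omega_{I}^{-1/2}(x_{0})|$, absorbs the $a^{\nu'}$ term and yields the claim. An alternative last step, once a uniform bound $M=\sup_{t\geq T}\|Z(t)\|<\infty$ is known, is Gronwall's inequality. It gives $\|Z(t)\|\leq\|Z(T)\|\exp(\int_{T}^{\infty}\|B\|)\leq \|Z(T)\|e^{C_{1}/(a+T)}$, which is even cleaner. Gronwall in fact applies directly to the linear integral inequality, giving $\|Z(t)\|\leq \|Z(T)\|\exp(C_{1}\int_{T}^{\infty}(a+s)^{-2}ds)\leq e^{C_{1}}\|Z(T)\|$, so the bootstrap may be unnecessary. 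I would present the Gronwall version and keep the bootstrap as the paper-style alternative.

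The main obstacle is making sure every constant is independent of $a$, since the estimate must hold uniformly in $x_{0}$. This depends on $T$ being chosen independently of $a$ (\cref{lem:QBound}) and on the $(a+t)^{-2}$ decay of $AQ$ having an $a$-independent constant, which \cref{lem:estNonParabRootsBetter} provides.
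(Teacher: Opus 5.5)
Your proof is correct, but your preferred route differs from the paper's. The paper proves this lemma purely by the bootstrap you keep as the alternative. It inserts the a priori bound $\|Z(s)\|\le C(a+s)^{\mu_{I}}$ of \cref{lem:initialEstMod}, which rests on the estimate \cref{prop:dunklKernel}(4), into the integral equation and lowers the exponent by one per pass. It treats the borderline exponent $\nu=1$ separately via $\log\frac{a+t}{a}\le\text{const.}\cdot(a+t)^{1/2}$. Finally it absorbs the leftover $a^{\tau}$ with $\tau\le -m_{I}$ using the lower bound $|\Omega_{I}^{-1/2}(x_{0})|\ge c\,a^{-m_{I}}$ of \cref{lem:OmegaNesting}. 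Your Gronwall argument instead applies directly to the linear inequality $\|Z(t)\|\le\|Z(T)\|+\int_{T}^{t}C_{1}(a+s)^{-2}\|Z(s)\|\,ds$ for the continuous function $t\mapsto\|Z(t)\|$. It gives $\|Z(t)\|\le e^{C_{1}/(a+T)}\|Z(T)\|\le e^{C_{1}}\|Z(T)\|$ at once. Because $\int_{T}^{\infty}\|(\mathbbm{1}+Q(s))^{-1}A(s)Q(s)\|\,ds$ is bounded independently of $a$, the constant is uniform in $x_{0}$. For this step you need neither the a priori growth bound nor \cref{lem:OmegaNesting}. The paper itself remarks that its bootstrap ``resembles the proof of the classical Gronwall lemma''; it is in effect an unrolled Gronwall iteration. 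Its main benefit is staying close to Clerc's template. The same shortcut would also handle \cref{lem:indHypothesisPhiExtended}, giving $\|\Phi(t)\|\le e^{CT/a}\|\Phi(0)\|$ on $[0,T]$. One small slip in your bootstrap alternative: replacing $\mu_{I}$ by $\mu_{I}+\frac12$ does not avoid the logarithmic case when $\mu_{I}$ is itself a half-integer. Choose the shift so that the starting exponent is not an integer, or handle $\nu=1$ directly as the paper does.
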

\begin{proof}
For $t\geq T$,
\begin{align}\label{eqn:intFormulaZ}
  Z(t) &= Z(T) + \int_{T}^{t} (\mathbbm{1}+Q(s))^{-1} A(s)Q(s)Z(s)\ ds.
\end{align}
Using the estimate of \cref{lem:estNonParabRootsBetter} for the coefficients of $AQ$ as well as our initial estimate for $Z$, we conclude that
\begin{align*}
  \|Z(t)\| &\leq \|Z(T)\| + \text{const.}\cdot \int_{T}^{t} \frac{(a+s)^{\mu_{I}}}{(a+s)^{2}}\ ds\\
           &\leq \|Z(T)\| + \text{const.}\cdot \int_{0}^{t} (a+s)^{\mu_{I}-2}\ ds.
\end{align*}
Note that
\begin{align*}
  \|Z(T)\|\leq \text{const.}\cdot |\Omega_{I}^{-1/2}(x_{0})| 
\end{align*}
as a consequence of \cref{lem:indHypothesisPhiExtended} and \cref{lem:QBound}.
Putting $\nu\coloneqq\mu_{I}$, we thus have
\begin{align}\label{ineq:Zintegral}
  \|Z(t)\|\leq \text{const.}\cdot \bigg(|\Omega_{I}^{-1/2}(x_{0})|+\int_{0}^{t}(a+s)^{\nu-2}\ ds\bigg). 
\end{align}

If $\nu> 1$, then 
\begin{align*}
  \|Z(t)\| &\leq \text{const.}\cdot \left(|\Omega_{I}^{-1/2}(x_{0})|+(a+t)^{\nu-1}\right).
\end{align*}
We can now use this new estimate in \cref{eqn:intFormulaZ} to obtain
\begin{align*}
  \|Z(t)\| &\leq \text{const.}\cdot \left(|\Omega_{I}^{-1/2}(x_{0})|+ \int_{0}^{t} \frac{|\Omega_{I}^{-1/2}(x_{0})| + (a+s)^{\nu-1}}{(a+s)^{2}}\ ds\right)\\
           &\leq \text{const.}\cdot \left(|\Omega_{I}^{-1/2}(x_{0})| + |\Omega_{I}^{-1/2}(x_{0})|\int_{0}^{\infty} (1+s)^{-2}\ ds + \int_{0}^{t} (a+s)^{\nu-3}\ ds \right)\\
           &\leq \text{const.}\cdot \left(|\Omega_{I}^{-1/2}(x_{0})| + (a+t)^{\nu-2}\right).
\end{align*}
We can progressively lower $\nu\to\nu-1$ in this fashion and reach
\begin{align}\label{ineq:ZiterationFinale}
  \|Z(t)\| &\leq \text{const.}\cdot \left(|\Omega_{I}^{-1/2}(x_{0})|+(a+t)^{\tau}\right)
\end{align}
with some exponent $0<\tau\leq 1$.
If $\nu = 1$ in \cref{ineq:Zintegral}, we calculate
\begin{align*}
  \int_{0}^{t}(a+s)^{\nu-2}\ ds = \log\left(\frac{a+t}{a}\right)\leq \text{const.}\cdot (a+t)^{1/2}.
\end{align*}
Thus, with this new estimate in \cref{eqn:intFormulaZ},
\begin{align}\label{ineq:Ziteration1Case}
  \|Z(t)\| &\leq \text{const.}\cdot \left(|\Omega_{I}^{-1/2}(x_{0})| + \int_{0}^{\infty}\frac{|\Omega_{I}^{-1/2}(x_{0})|}{(1+s)^{2}}\ ds + \int_{0}^{t} (a+s)^{-3/2}\ ds\right)\nonumber\\
           &\leq\text{const.}\cdot\left(|\Omega_{I}^{-1/2}(x_{0})| + a^{-1/2}\right).
\end{align}
After possibly one further iteration in \cref{eqn:intFormulaZ}, using \cref{ineq:ZiterationFinale} or \cref{ineq:Ziteration1Case}, we eventually reach
\begin{align*}
  \|Z(t)\|\leq \text{const.}\cdot \left(|\Omega_{I}^{-1/2}(x_{0})| + a^{\tau}\right)
\end{align*}
with some exponent $\tau\leq 0$. This finally leads to
\begin{align*}
\|Z(t)\|&\leq \text{const.}\left( |\Omega_{I}^{-1/2}(x_{0})| + \int_{0}^{\infty}\frac{|\Omega_{I}^{-1/2}(x_{0})|}{(a+s)^{2}}\ ds +\int_{0}^{\infty} a^{\tau}(a+s)^{-2}\ ds\right)\\
        &\leq \text{const.}\cdot \left(|\Omega_{I}^{-1/2}(x_{0})|+a^{\tau-1}\right).
\end{align*}
Recall the lower bound of \cref{lem:OmegaNesting}, which says that
\begin{align*}
  |\Omega_{I}^{-1/2}(x_{0})| \geq \text{const.}\cdot a^{-m_{I}}.
\end{align*}
Thus, if $\tau\leq -m_{I}$, we have
\begin{align*}
  \|Z(t)\|\leq\text{const.}\cdot |\Omega_{I}^{-1/2}(x_{0})| 
\end{align*}
and stop the process. Otherwise, we obtain the inequality
\begin{align*}
  \|Z(t)\|&\leq \text{const.}\left(|\Omega_{I}^{-1/2}(x_{0})| +\int_{0}^{\infty}\frac{|\Omega_{I}^{-1/2}(x_{0})|}{(a+s)^{2}}\ ds + \int_{0}^{\infty} a^{\tau-1}(a+s)^{-2}\ ds\right)\\
          &\leq \text{const.}\cdot \left(|\Omega_{I}^{-1/2}(x_{0})| + a^{\tau-2}\right).
\end{align*}
Successively lowering the exponent $\tau$ by $1$ until $\tau\leq -m_{I}$, we arrive at
\begin{align}\label{ineq:bootstrapZfinish}
  \|Z(t)\|&\leq\text{const.}\cdot (|\Omega_{I}^{-1/2}(x_{0})|+a^{\tau}) \leq \text{const.}\cdot |\Omega_{I}^{-1/2}(x_{0})|. 
\end{align}
This finishes the proof of \cref{lem:bootstrapZ}.
\end{proof}

We thus conclude
\begin{proposition}[Good estimate for $\Phi(t)$ for $t\geq 0$]\label{lem:bootstrapPhiFinish}
  There is a constant $C\geq 0$ such that
  \begin{align*}
    \|\Phi(t)\| \leq C\, |\Omega_{I}^{-1/2}(x_{0})|\qquad (\forall t\geq 0).
  \end{align*}
\end{proposition}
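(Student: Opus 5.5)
The estimate follows by splitting the half-line $[0,\infty[$ at the constant $T$ from \cref{lem:QBound}; both pieces are essentially already done.

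For $t\in[0,T]$ there is nothing new to prove. \cref{lem:indHypothesisPhiExtended} gives exactly $\|\Phi(t)\|\leq C\,|\Omega_{I}^{-1/2}(x_{0})|$ with a constant independent of $x_{0}$.

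For $t\geq T$, I would undo the transformation $Z(t)=(\mathbbm{1}+Q(t))^{-1}\Phi(t)$ and write
\begin{align*}
  \Phi(t)=(\mathbbm{1}+Q(t))Z(t).
\end{align*}
By \cref{lem:QBound} we have $\|Q(t)\|\leq 1/2$ for $t\geq T$, so $\|\mathbbm{1}+Q(t)\|\leq \|\mathbbm{1}\|+1/2$. This is a bound depending only on the fixed matrix norm; any equivalence constants between norms are harmless. Hence
\begin{align*}
  \|\Phi(t)\|\leq \text{const.}\cdot\|Z(t)\|\leq \text{const.}\cdot|\Omega_{I}^{-1/2}(x_{0})|
\end{align*}
by \cref{lem:bootstrapZ}.

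Taking $C$ as the maximum of the two constants gives the proposition for all $t\geq 0$.

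The only point requiring care is uniformity: the constants must not depend on $x_{0}$, i.e.\ on $a=a(x_{0})$ or on the particular $x\in\a^{i}$. This holds because $T$ in \cref{lem:QBound} was chosen independently of $a$, and the constants in \cref{lem:indHypothesisPhiExtended} and \cref{lem:bootstrapZ} depend only on $R,k,\lambda$. The genuine work, namely the bootstrap, was carried out in those two lemmas, so I expect no real obstacle here beyond tracking this uniformity.
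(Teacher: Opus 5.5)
Your proposal is correct and is essentially the paper's own proof. The paper likewise cites \cref{lem:indHypothesisPhiExtended} for $t\in[0,T]$, and for $t\geq T$ combines $\Phi=(\mathbbm{1}+Q)Z$ with $\|Q(t)\|\leq 1/2$ from \cref{lem:QBound} and \cref{lem:bootstrapZ}, all constants being uniform in $a(x_{0})$ as you note.
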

\begin{proof}
  This is immediate for all $t\geq T$ by combining \cref{lem:QBound} and \cref{lem:bootstrapZ}. For all $0 \leq t\leq T$ this was the content of \cref{lem:indHypothesisPhiExtended}. 
\end{proof}

Recall the definition of $t(x)$ from \cref{eqn:deftx}, which allowed us to write $x=x_{0}+t(x)H_{i}$ with $t(x)\geq 0$. Furthermore, recall \cref{eqn:OmegaX0X}, which implies that $\Omega_{I}(x_{0})=\Omega_{I}(x)$.
Then, putting $t=t(x)$ and considering the first entry of $\Phi$, we see that
\begin{align*}
  |\omega_{I}^{1/2}(x)\Exp_{k}(i\lambda, x)|&\leq \text{const.}\cdot |\Omega_{I}^{-1/2}(x)|.
\end{align*}
Hence
\begin{align*}
  |\Exp_{k}(i\lambda, x)| \leq \text{const.}\cdot |\omega_{I}^{-1/2}(x)\Omega_{I}^{-1/2}(x)| \leq C_{R, k,\lambda}\, |\Omega_{k}^{-1/2}(x)|.
\end{align*}
This concludes the inductive step $i+1\to i$, and thus finishes the proof of \cref{thm:mainthm}. 
\qed

\subsection{Proof of \cref{cor:realSpectralParam}}
We use a Phragmen-Lindelöf argument.
Suppose that $k\geq 0$. Then $\Theta_{k}(x)\coloneqq \prod_{\alpha\in R_{+}}(1+\langle \alpha, x\rangle)^{2k_{\alpha}}$ is holomorphic in an open neighbourhood of $\{x\in\a_{\C} : \Re\, x \in \weylcham\}$ and $|\Theta_{k}(x)|\leq \Omega_{k}(x)$. For fixed $\lambda\in\opweylcham$ and $x\in\weylcham$, consider the function
\begin{align*}
  G(z)\coloneqq \Theta_{k}^{1/2}(zx)e^{-\langle \lambda, zx\rangle}\Exp_{k}(\lambda, zx), 
\end{align*}
which is holomorphic in an open neighbourhood of $H=\{z\in\C : \Re\, z\geq 0\}$. Note that $\Re\langle w\lambda, zx\rangle \leq \langle \lambda, (\Re\, z)x\rangle$ for all $w\in W$ and $z\in H$, as $\lambda$ and $x$ are contained in the same positive Weyl chamber.
Using \cref{prop:dunklKernel}(4), we obtain
\begin{align*}
  |G(z)| \leq C\, \Omega_{k}^{1/2}(zx) 
\end{align*}
with a constant independent of $z\in H$. In particular, $G$ is of subexponential growth. Furthermore, by \cref{thm:mainthm}, $G$ is bounded on the imaginary axis. By the Phragmen-Lindelöf theorem (\cite[Thm. 5.61]{titchmarsh}), we therefore conclude that $G$ is bounded in $H$.
 Thus
\begin{align*}
  |\Exp_{k}(\lambda, x)| \leq C\, e^{\langle \lambda, x\rangle}\Omega_{k}^{-1/2}(x) \qquad (\forall x\in \weylcham)
\end{align*}
with some constant $C=C_{R, k,\lambda}\geq 0$.\qed

\section{Proof of \cref{thm:mainThmDeriv}}\label{sec:proofDeriv}
We prove this by induction on the degree $\deg(p)$. 
Throughout this section, let $\lambda\in\areg$ be fixed.
We mention that Clerc (\cite{clercBessel}) also performs an induction on the degree to establish estimates for derivatives in the geometric cases. However, this is easier in those cases, and as we have to deal with some reflection terms in the general case, we separated the proof of \cref{thm:mainThmDeriv} from that of \cref{thm:mainthm}. We postpone further comments on this issue to \cref{sec:remarks}.

  Note that \cref{thm:mainthm} establishes the claim for $\deg(p)=0$. 
  To deal with the case $\deg(p)=1$, consider $\partial_{\xi}$ with $\xi\in\a$. We repeat the arguments from the last section in a slightly modified form. That is, we establish bounds for $\partial_{\xi}\Exp_{k}(i\lambda, \cdot)$ on a fixed sector $S$ by induction from subsets $\a^{0}\cup\a^{i+1}$ to subsets $\a^{i}$. Here, the subsets $\a^{i}$ are those defined in \cref{eqn:defai}.

  Thus, let $x\in\a^{i}$ and again define $t(x)\geq 0$ and $x_{0}\in\a^{0}\cup\a^{i+1}$ as in \cref{eqn:defx0,eqn:deftx}, so that $x=x_{0}+t(x)H_{i}$ and $a\coloneqq a(x_{0})\coloneqq \langle \alpha_{1}, x_{0}\rangle =\ldots =\langle \alpha_{i}, x_{0}\rangle \geq 1$. We again consider the system (\ref{eqn:systemHi})
  \begin{align*}
    \partial_{H_{i}}\Phi^{H_{i},i\lambda}(x) = A^{H_{i},i\lambda}(x)\Phi^{H_{i},i\lambda}(x). 
  \end{align*}
  Now let $\wt\Phi(x) \coloneqq (e^{-i\langle x, w\lambda\rangle }\Exp_{k}(iw\lambda, x))_{w\in W}$, so that $\Phi(x)\coloneqq \Phi^{H_{i},i\lambda}(x)=\omega_{I}^{1/2}(x)\wt\Phi(x)$. Further, fix some $\xi\in\a$. We then calculate (abbreviating $H\coloneqq H_{i}$)
  \begin{align*}
    \partial_{\xi}\partial_{H}(\omega_{I}^{1/2}\wt\Phi) = \partial_{H}(\omega_{I}^{1/2}\partial_{\xi}\wt\Phi) + \partial_{H}((\partial_{\xi}\omega_{I}^{1/2}) \wt\Phi)
  \end{align*}
  and on the other hand
  \begin{align*}
    \partial_{\xi}(A\omega_{I}^{1/2}\wt\Phi) = (\partial_{\xi}A)\omega_{I}^{1/2}\wt\Phi + A(\omega_{I}^{1/2}\partial_{\xi} \wt\Phi)+ A(\partial_{\xi}\omega^{1/2}_{I})\wt\Phi. 
  \end{align*}
  We thus conclude that 
  \begin{align}\label{eqn:defPsi}
  \Psi(x) \coloneqq \omega_{I}^{1/2}(x)\partial_{\xi}\wt\Phi(x) = \Big(\omega_{I}^{1/2}(x)\partial_{\xi}\big(e^{-i\langle \cdot, w\lambda\rangle}\Exp_{k}(iw\lambda, \cdot)\big)(x)\Big)_{w\in W} 
  \end{align}
  satisfies the system
  \begin{align}\label{eqn:systemPsi}
    \partial_{H}\Psi(x) &= \partial_{\xi}\partial_{H}(\omega_{I}^{1/2}\wt\Phi) -\partial_{H}((\partial_{\xi}\omega_{I}^{1/2})\wt\Phi)\nonumber\\
                        &= \partial_{\xi}(A\omega_{I}^{1/2}\wt\Phi)-\partial_{H}((\partial_{\xi}\omega_{I}^{1/2})\wt\Phi)\nonumber\\
    &= A(x)\Psi(x) + (\partial_{\xi}A(x))\omega_{I}^{1/2}(x)\wt\Phi(x)+A(x)(\partial_{\xi}\omega_{I}^{1/2}(x))\wt\Phi(x)\nonumber\\
                        &\qquad\qquad-(\partial_{H}\partial_{\xi}\omega_{I}^{1/2}(x))\wt\Phi(x)-(\partial_{\xi}\omega_{I}^{1/2}(x))(\partial_{H}\wt\Phi(x))\nonumber\\
                        &=A(x)\Psi(x) + B_{1}(x)+ B_{2}(x)-B_{3}(x)-B_{4}(x)\nonumber\\
                        &= A(x)\Psi(x) + B(x).
  \end{align}
We study the different terms that make up $B(x)$.
  \begin{enumerate}
    \item Recall that the entries of $A(x)$ are either $A_{g, h}(x)=0$ or
      \begin{align*}
        A_{g, s_{\alpha}g}(x) = k_{\alpha} \frac{\langle \alpha, H_{i}\rangle}{\langle \alpha, x\rangle}e^{-i\langle \alpha, x\rangle \langle \alpha, g\lambda\rangle}.
      \end{align*}
      Thus the entries of $\partial_{\xi}A(x)$ are linear combinations of
      \begin{align*}
        \frac{k_{\alpha}}{\langle \alpha, x\rangle^{2}}e^{-i\langle \alpha, x\rangle \langle \alpha, g\lambda\rangle }\qquad \text{and}\qquad \frac{k_{\alpha}}{\langle \alpha, x\rangle} e^{-i\langle \alpha, x\rangle\langle \alpha, g\lambda\rangle}
        \end{align*}
      with $\alpha\in R_{+}\setminus R_{I}$ by the product rule. Using the induction hypothesis (\cref{thm:mainthm}), we see that 
      $\|\omega_{I}^{1/2}(x)\wt\Phi(x)\|\leq C\, |\Omega_{I}^{-1/2}(x)|$. 

      Hence, we may split $B_{1}(x) = D_{1}(x)+D_{2}(x)$, where the entries of $D_{1}(x)$ are bounded by linear combinations of $\frac{k_{\alpha}}{\langle \alpha, x\rangle^{2}}|\Omega^{-1/2}_{I}(x)|$ and the entries of $D_{2}(x)$ are linear combinations of $\frac{k_{\alpha}}{\langle \alpha, x\rangle}e^{-i\langle \alpha, x\rangle \langle \alpha, g\lambda\rangle}\Phi(x)$. 

    \item Since $\partial_{\xi}\omega_{I}^{1/2}(x)=\omega_{I}^{1/2}(x)\sum_{\alpha\in R_{+}\setminus R_{I}}\frac{k_{\alpha}\langle \alpha, \xi\rangle}{\langle \alpha, x\rangle }$, the entries of $B_{2}(x)$ are bounded by linear combinations of $\frac{k_{\alpha}k_{\beta}}{\langle \alpha, x\rangle \langle \beta, x\rangle}|\Omega_{I}^{-1/2}(x)|$ with $\alpha, \beta\in R_{+}\setminus R_{I}$.

    \item By the Leibniz rule, the entries of $B_{3}(x)$ are bounded by linear combinations\\ of $\frac{k_{\alpha}k_{\beta}}{\langle \alpha, x\rangle\langle \beta, x\rangle }|\Omega_{I}^{-1/2}(x)|$ with $\alpha, \beta\in R_{+}\setminus R_{I}$.

    \item Since $R_{I}$ is chosen according to $H_{i}$, we know that
      \begin{align*}
        T_{H} = \partial_{H} + \sum_{\alpha\in R_{+}\setminus R_{I}} k_{\alpha}\langle \alpha, H\rangle \frac{1-s_{\alpha}}{\langle \alpha, \cdot\rangle },
      \end{align*}
      see \cref{eqn:dunklOpParabolicDegen}.
      Thus
      \begin{align*}
        &\partial_{H}(e^{-i\langle x, w\lambda\rangle}\Exp_{k}(iw\lambda, x)) \\
        =&\, e^{-i\langle x, w\lambda\rangle }(\partial_{H}\Exp_{k}(iw\lambda, x)-\langle H, iw\lambda\rangle \Exp_{k}(iw\lambda, x))\\
        =&\, e^{-i\langle x, w\lambda\rangle }(\partial_{H}-T_{H})\Exp_{k}(iw\lambda, x)\\
        =&\, e^{-i\langle x, w\lambda\rangle }\sum_{\alpha\in R_{+}\setminus R_{I}}\frac{k_{\alpha}\langle \alpha, H\rangle}{\langle \alpha, x\rangle }(\Exp_{k}(is_{\alpha}w\lambda, x)-\Exp_{k}(iw\lambda, x)).
      \end{align*}
      From this we conclude that $B_{4}(x)$ is also bounded by linear combinations\\ of $\frac{k_{\alpha}k_{\beta}}{\langle \alpha, x\rangle \langle \beta, x\rangle }|\Omega_{I}^{-1/2}(x)|$ with $\alpha, \beta\in R_{+}\setminus R_{I}$.

  \end{enumerate}
 
  Again by abuse of notation, we abbreviate 
  \begin{align}
     \Psi(t)\coloneqq \Psi(x_{0}+tH_{i}) \qquad (\forall t\geq 0)
  \end{align}
  and similarly $A(t), B(t)$. We then obtain from \cref{eqn:systemPsi} that
  \begin{align}\label{eqn:systemDeriv}
    \frac{d}{dt}\Psi(t) = A(t)\Psi(t) + B(t) \qquad (\forall t\geq 0).
  \end{align}
  The key observation will be that the perturbation $B(t)$ in the system won't affect the bound on $\Psi$ in terms of $|\Omega_{I}^{-1/2}|$.

  To this end, we begin with a technical lemma investigating $B(t)$, the proof of which we postpone to the Appendix (\cref{sec:appendix}).
  \begin{lemma}\label{lem:integrabilityB}
      Let $Q$ be as in \cref{prop:AQentries} and $T\geq 0$ as in \cref{lem:QBound}.
      We then have
      \begin{enumerate}
        \item $\displaystyle \left\|\int_{0}^{t} B(s)\ ds\right\| \leq \text{const.}\cdot |\Omega_{I}^{-1/2}(x_{0})|$ for all $0\leq t\leq T$.
        \item $\displaystyle \left\|\int_{T}^{t} (\mathbbm{1}+Q(s))^{-1} B(s)\ ds\right\| \leq \text{const.}\cdot |\Omega_{I}^{-1/2}(x_{0})|$ for all $t\geq T$.
      \end{enumerate}
    \end{lemma}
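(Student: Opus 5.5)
Part (1) only needs crude pointwise bounds, because $T$ does not depend on $a=a(x_{0})$. Part (2) needs a finer treatment of the single term of $B$ that decays only like $(a+s)^{-1}$. I would use the decomposition $B=D_{1}+D_{2}+B_{2}-B_{3}-B_{4}$ from the discussion before \cref{eqn:systemDeriv}, evaluated along $x=x_{0}+sH_{i}$. Every piece except $D_{2}$ has entries bounded by linear combinations of
\begin{align*}
  \frac{|\Omega_{I}^{-1/2}(x_{0}+sH_{i})|}{\langle \alpha, x_{0}+sH_{i}\rangle\langle \beta, x_{0}+sH_{i}\rangle}\qquad(\alpha,\beta\in R_{+}\setminus R_{I}).
\end{align*}
By \cref{eqn:OmegaX0X} and the lower bound of \cref{lem:estNonParabRoots}, these are $\le \text{const.}\,(a+s)^{-2}|\Omega_{I}^{-1/2}(x_{0})|$. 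The entries of $D_{2}$ are linear combinations of
\begin{align*}
  f_{\alpha,g}(s)\,\Phi_{h}(s),\qquad f_{\alpha,g}(s)\coloneqq\frac{k_{\alpha}}{\langle \alpha, x_{0}+sH_{i}\rangle}e^{-i\langle \alpha, x_{0}+sH_{i}\rangle\langle \alpha, g\lambda\rangle},
\end{align*}
where $\Phi=\Phi^{H_{i},i\lambda}$ is bounded by \cref{lem:bootstrapPhiFinish}. Hence $|f_{\alpha,g}(s)\Phi_{h}(s)|\le \text{const.}\,(a+s)^{-1}|\Omega_{I}^{-1/2}(x_{0})|$.

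For (1): on $[0,T]$ every entry of $B(s)$ is at most $\text{const.}\cdot a^{-1}|\Omega_{I}^{-1/2}(x_{0})|\le \text{const.}\cdot|\Omega_{I}^{-1/2}(x_{0})|$, since $a\ge 1$. Integrating over an interval of length at most $T$ gives the claim.

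For (2), the terms of type $D_{1},B_{2},B_{3},B_{4}$ are handled directly. \cref{lem:QBound} bounds $(\mathbbm{1}+Q)^{-1}$ uniformly, and $\int_{T}^{\infty}(a+s)^{-2}\,ds\le 1$. The main obstacle is $D_{2}$, whose entries are only $O((a+s)^{-1})$ and so are not absolutely integrable. Here I would exploit oscillation, as in \cref{prop:AQentries}: since $\lambda$ is regular, $\langle\alpha,g\lambda\rangle\neq 0$. So $f_{\alpha,g}$ has a primitive $F_{\alpha,g}(s)\coloneqq-\int_{s}^{\infty}f_{\alpha,g}$, up to the constant $\langle\alpha,H_{i}\rangle\neq0$. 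This primitive is a combination of $\operatorname{si}$ and $\operatorname{Ci}$ terms and satisfies $|F_{\alpha,g}(s)|\le\text{const.}/(a+s)$, as in \cref{lem:estNonParabRootsBetter}(1). I would then integrate by parts:
\begin{align*}
  \int_{T}^{t}(\mathbbm{1}+Q)^{-1}f_{\alpha,g}\Phi_{h}\,ds=\Big[(\mathbbm{1}+Q)^{-1}F_{\alpha,g}\Phi_{h}\Big]_{T}^{t}-\int_{T}^{t}F_{\alpha,g}\,\frac{d}{ds}\Big((\mathbbm{1}+Q)^{-1}\Phi_{h}\Big)ds.
\end{align*}
The boundary terms are $O(|\Omega_{I}^{-1/2}(x_{0})|)$ by \cref{lem:bootstrapPhiFinish} and \cref{lem:QBound}. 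For the remaining integrand I use $\frac{d}{ds}(\mathbbm{1}+Q)^{-1}=-(\mathbbm{1}+Q)^{-1}A(\mathbbm{1}+Q)^{-1}$ and $\Phi'=A\Phi$ from \cref{eqn:system}. Both derivatives are $O((a+s)^{-1})$ times a quantity bounded by $|\Omega_{I}^{-1/2}(x_{0})|$. Multiplied by $F_{\alpha,g}=O((a+s)^{-1})$, the integrand is $O((a+s)^{-2}|\Omega_{I}^{-1/2}(x_{0})|)$, which is integrable uniformly in $a\ge1$.

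The only care needed is bookkeeping: one must check that each $D_{2}$-entry is exactly a product of such an oscillatory factor with a component of $\Phi$ (and not of $\Psi$). This holds because $D_{2}$ comes from differentiating the $1/\langle\alpha,x\rangle$-free exponential factor in $A$ and is multiplied by $\omega_{I}^{1/2}\wt\Phi=\Phi$. All constants are independent of $a$, which gives (2).
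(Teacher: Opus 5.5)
Your proposal is correct and follows the paper's appendix proof essentially step for step. You bound every piece of $B$ other than $D_{2}$ by $\text{const.}\,(a+s)^{-2}|\Omega_{I}^{-1/2}(x_{0})|$, and you get part (1) from crude bounds because $T$ does not depend on $a$. For $D_{2}$ you integrate by parts against the $\operatorname{si}/\operatorname{Ci}$ primitive of the oscillatory factor, using $\frac{d}{ds}(\mathbbm{1}+Q)^{-1}=-(\mathbbm{1}+Q)^{-1}A(\mathbbm{1}+Q)^{-1}$ and $\Phi'=A\Phi$, which is exactly the paper's argument.
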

  We now proceed as in the proof of \cref{thm:mainthm}. 
  The induction hypothesis now entails
  \begin{lemma}[Good estimate for $\Psi(0)$] 
    There is a constant $C\geq 0$ such that
    \begin{align*}
      \|\Psi(0)\| \leq C\, |\Omega_{I}^{-1/2}(x_{0})|
    \end{align*}
  \end{lemma}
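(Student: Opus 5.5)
We need to bound $\Psi(0)=\Psi(x_{0})$, whose entries are $\omega_{I}^{1/2}(x_{0})\,\partial_{\xi}\big(e^{-i\langle\cdot,w\lambda\rangle}\Exp_{k}(iw\lambda,\cdot)\big)(x_{0})$. The plan is to expand this derivative by the product rule into two pieces. The first is $-i\langle\xi,w\lambda\rangle e^{-i\langle x_0,w\lambda\rangle}\Exp_{k}(iw\lambda,x_{0})$. The second is $e^{-i\langle x_0,w\lambda\rangle}\partial_{\xi}\Exp_{k}(iw\lambda,\cdot)(x_{0})$. Since the exponential has modulus one, it suffices to bound, for every $w\in W$,
\begin{align*}
  |\omega_{I}^{1/2}(x_{0})|\Big(|\Exp_{k}(iw\lambda,x_{0})| + |\partial_{\xi}\Exp_{k}(iw\lambda,\cdot)(x_{0})|\Big)
\end{align*}
by a constant times $|\Omega_{I}^{-1/2}(x_{0})|$.

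Here $w\lambda\in\areg$ for all $w\in W$. We are in the inner induction (from $\a^{0}\cup\a^{i+1}$ to $\a^{i}$) for $\deg(p)=1$, and $x_{0}\in\a^{0}\cup\a^{i+1}$. So the induction hypothesis for the derivative applies at $x_{0}$ with spectral parameter $iw\lambda$. Concretely, we run the whole argument simultaneously for the finitely many parameters $w\lambda$, or equivalently for all sectors, using $W$-equivariance. This gives $|\partial_{\xi}\Exp_{k}(iw\lambda,x_{0})|\le C|\Omega_{k}^{-1/2}(x_{0})|$. For the undifferentiated term, \cref{thm:mainthm} applied to $w\lambda$ gives $|\Exp_{k}(iw\lambda,x_{0})|\le C|\Omega_{k}^{-1/2}(x_{0})|$.

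Multiplying by $|\omega_{I}^{1/2}(x_{0})|$ and using (\ref{ineq:OmegakOmegaI}), i.e. $|\Omega_{k}^{-1/2}(x_{0})|\le|\omega_{I}^{-1/2}(x_{0})\Omega_{I}^{-1/2}(x_{0})|$, both terms are bounded by a constant times $|\Omega_{I}^{-1/2}(x_{0})|$. This works exactly as in \cref{lem:indHypothesisPhi}.

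The only delicate point is making sure the induction hypothesis is formulated uniformly over all $w\lambda$ and all $\xi$. For $\xi$ this is immediate, since the bound is linear in $\xi$ and we can take a basis. For $w\lambda$, I would phrase the inner induction hypothesis as holding for all $\mu\in W\lambda$ at once. The compact base case $\a^{0}$ is then trivial by continuity of $\partial_{\xi}\Exp_{k}$, and the constant is a maximum over finitely many choices.
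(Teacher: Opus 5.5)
Your proposal is correct and follows essentially the same route as the paper: the Leibniz rule, the undifferentiated term bounded via \cref{thm:mainthm}, the $\partial_{\xi}$-term bounded by the inner induction hypothesis at $x_{0}\in\a^{0}\cup\a^{i+1}$, and then (\ref{ineq:OmegakOmegaI}) to trade $|\Omega_{k}^{-1/2}(x_{0})|$ for $|\omega_{I}^{-1/2}(x_{0})\Omega_{I}^{-1/2}(x_{0})|$. Your remark that the inner induction must be run simultaneously for all $\mu\in W\lambda$ (since $\Psi$ has an entry for every $w\in W$) makes explicit a point the paper leaves implicit.
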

  \begin{proof}
    Recall (\ref{ineq:OmegakOmegaI}), which stated that $|\Omega_{k}^{-1/2}|\leq |\omega_{I}^{-1/2}\Omega_{I}^{-1/2}|$ and the definition (\ref{eqn:defPsi}) of $\Psi$. We employ the Leibniz rule. For the lower order derivatives of $\Exp_{k}(i\lambda,\cdot)$, we already established a bound by $\text{const.}\cdot \Omega_{k}^{-1/2}(x_{0})$ by the induction hypothesis on $\deg(p)$. For the highest order derivative of $\Exp_{k}(i\lambda,\cdot)$ (here $\partial_{\xi}\Exp_{k}(i\lambda, \cdot)$) the induction hypothesis in step $i+1\to i$ states that
    \begin{align*}
      |\partial_{\xi}\Exp_{k}(i\lambda, x_{0})| \leq \text{const.}\cdot |\Omega_{k}^{-1/2}(x_{0})|. 
    \end{align*}
  \end{proof}
  Compare this to \cref{lem:indHypothesisPhi}.
  Thanks to \cref{prop:dunklKernel}(4) and the Leibniz rule, we establish the initial estimate
  \begin{lemma}[Initial estimate for $\Psi(t)$]
    We keep the notation from \cref{lem:initialEst}.
    There is a constant $C\geq 0$ such that
    \begin{align*}
      \|\Psi(t)\|\leq C\, (a+t)^{\mu_{I}}, 
    \end{align*}
  \end{lemma}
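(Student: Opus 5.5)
The plan is to expand $\Psi(t)=\omega_{I}^{1/2}(x_{0}+tH_{i})\,\partial_{\xi}\wt\Phi(x_{0}+tH_{i})$ entrywise by the Leibniz rule and bound each factor separately, exactly as in the proof of \cref{lem:initialEst}. For fixed $w\in W$, the product rule gives
\begin{align*}
  \partial_{\xi}\big(e^{-i\langle \cdot, w\lambda\rangle}\Exp_{k}(iw\lambda,\cdot)\big)(x) = e^{-i\langle x, w\lambda\rangle}\Big(\partial_{\xi}\Exp_{k}(iw\lambda, x) - i\langle \xi, w\lambda\rangle\Exp_{k}(iw\lambda, x)\Big).
\end{align*}
Since $x\in\a$ is real and the spectral parameter $iw\lambda$ is purely imaginary, the factor $e^{-i\langle x, w\lambda\rangle}$ has modulus one.

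Next I apply \cref{prop:dunklKernel}(4) to both remaining terms. For the derivative term I take $p=\xi$ of degree one, and for the other term $p=1$. The exponent $\max_{w'\in W}\Re\langle w' iw\lambda, x\rangle$ vanishes, so both terms are bounded by constants depending only on $R,k,\lambda,\xi$: the factors are $|iw\lambda|$ and $|\langle\xi,w\lambda\rangle|$, and $\lambda$ is fixed. Hence $\|\partial_{\xi}\wt\Phi(x)\|\leq C$ uniformly for $x\in\a$.

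It then remains to bound $|\omega_{I}^{1/2}(x_{0}+tH_{i})|=\prod_{\alpha\in R_{+}\setminus R_{I}}|\langle\alpha,x_{0}+tH_{i}\rangle|^{\Re k_{\alpha}}$. By the upper bound of \cref{lem:estNonParabRoots}, this is at most $\prod_{\alpha} (C_{\alpha}(a+t))^{\Re k_{\alpha}}=\text{const.}\cdot(a+t)^{\mu_{I}}$; here $\Re k_{\alpha}\geq 0$ is used so that the monotonicity goes the right way. Multiplying the two bounds gives the claim for all $t\geq 0$, with a constant independent of $x_{0}$.

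There is no real obstacle. The only point needing care is that the constant must not depend on $x_{0}$ (equivalently on $a$) or on $t$. This holds because \cref{prop:dunklKernel}(4) is uniform in the spatial variable, and the constants in \cref{lem:estNonParabRoots} depend only on the root system.
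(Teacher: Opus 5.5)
Your proposal is correct and follows essentially the same route as the paper, which proves this lemma only by citing the Leibniz rule and \cref{prop:dunklKernel}(4). Your argument fills in that citation: the exponential factor is trivial because the spectral parameter is purely imaginary, and the factor $|\omega_{I}^{1/2}(x_{0}+tH_{i})|$ is controlled by the upper bound of \cref{lem:estNonParabRoots}, exactly as in \cref{lem:initialEst}.
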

  Compare this to \cref{lem:initialEst}. 
  We extend the estimate for $\Psi(0)$ to one for $\Psi(t)$ for $t\in [0, T]$ similar to \cref{lem:indHypothesisPhiExtended}:
  \begin{lemma}[Good estimate for $\Psi(t)$ for $0\leq t\leq T$]\label{lem:indHypExtendedPsi}
     There is a constant $C\geq 0$ such that
    \begin{align*}
      \|\Psi(t)\|\leq C\, |\Omega_{I}^{-1/2}(x_{0})|
    \end{align*}
    for all $t\in [0, T]$.
  \end{lemma}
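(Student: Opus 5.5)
We mimic the bootstrap in the proof of \cref{lem:indHypothesisPhiExtended}, now for the inhomogeneous system (\ref{eqn:systemDeriv}). Integrating from $0$ to $t\in[0,T]$ gives
\begin{align*}
  \Psi(t)=\Psi(0)+\int_{0}^{t}B(s)\,ds+\int_{0}^{t}A(s)\Psi(s)\,ds .
\end{align*}
By the good estimate for $\Psi(0)$ and \cref{lem:integrabilityB}(1), the first two terms are bounded by $\text{const.}\cdot|\Omega_{I}^{-1/2}(x_{0})|$ uniformly in $t\in[0,T]$. Hence
\begin{align*}
  \|\Psi(t)\|\leq \text{const.}\cdot|\Omega_{I}^{-1/2}(x_{0})|+\int_{0}^{t}\|A(s)\|\,\|\Psi(s)\|\,ds \qquad (0\le t\le T).
\end{align*}
This is exactly inequality (\ref{ineq:hdiIneq}) with $\Phi$ replaced by $\Psi$.

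Next we run the iteration. By the lower bound in \cref{lem:estNonParabRoots}, the entries of $A(s)$ are bounded by $\text{const.}/a$ for all $s\geq0$. Insert the initial estimate $\|\Psi(s)\|\leq C(a+s)^{\mu_{I}}$. Since $T$ is independent of $a\geq1$, we have $(a+s)^{\mu_I}\le \text{const.}\cdot a^{\mu_I}$ on $[0,T]$, and therefore
\begin{align*}
  \|\Psi(t)\|\leq \text{const.}\cdot\big(|\Omega_{I}^{-1/2}(x_{0})|+a^{\mu_{I}-1}\big).
\end{align*}
Feeding this bound back into the integral inequality gives $\text{const.}\cdot(|\Omega_{I}^{-1/2}(x_{0})|+a^{\mu_{I}-2})$. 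The term with $|\Omega_I^{-1/2}(x_0)|$ only picks up a factor $T/a\le T$, so it stays of the same form. After finitely many steps, with the number of steps depending only on $\mu_I$ and $m_I$, we reach an exponent $\nu\leq -m_{I}$. The lower bound $|\Omega_{I}^{-1/2}(x_{0})|\geq c\,a^{-m_{I}}$ of \cref{lem:OmegaNesting} then absorbs $a^{\nu}$, and the claim follows.

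The only new ingredient compared with \cref{lem:indHypothesisPhiExtended} is the inhomogeneity $B$. It enters only through the fixed additive term $\int_0^t B$, which \cref{lem:integrabilityB}(1) controls uniformly on $[0,T]$, so it does not change the iteration. The main point needing care is to make sure the constants do not depend on $x_0$ (equivalently on $a$). This holds because $T$ comes from \cref{lem:QBound}, and the constants in \cref{lem:integrabilityB} and in the initial estimate depend only on $R,k,\lambda,\xi$. With that checked, the step is routine.
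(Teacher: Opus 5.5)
Your proposal is correct and follows the paper's proof essentially line by line. You integrate (\ref{eqn:systemDeriv}), bound $\Psi(0)$ and $\int_0^t B(s)\,ds$ by $\text{const.}\cdot|\Omega_I^{-1/2}(x_0)|$ via the good estimate for $\Psi(0)$ and \cref{lem:integrabilityB}(1), and then rerun the bootstrap of \cref{lem:indHypothesisPhiExtended} using $\|A(s)\|\le\text{const.}/a$ and the initial estimate, finishing with the lower bound of \cref{lem:OmegaNesting}.
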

  \begin{proof} 
    From \cref{eqn:systemDeriv} and \cref{lem:integrabilityB} it follows that for all $0\leq t\leq T$,
  \begin{align}\label{ineq:intFormulaPsi}
    \|\Psi(t)\| &\leq \|\Psi(0)\| + \int_{0}^{t} \|A(s)\|\|\Psi(s)\|\ ds + \left\|\int_{0}^{t} B(s) ds\right\|\nonumber\\
                &\leq \|\Psi(0)\| + \text{const.}\cdot\int_{0}^{t} \frac{(a+s)^{\mu_{I}}}{a}\ ds + \text{const.}\cdot |\Omega_{I}^{-1/2}(x_{0})|\nonumber\\
                &\leq \text{const.}\cdot \left(|\Omega_{I}^{-1/2}(x_{0})| + \frac{T(a+T)^{\mu_{I}}}{a}\right).
  \end{align}
  We continue with a bootstrap argument as in the proof of \cref{lem:indHypothesisPhiExtended}, successively lowering the exponent of $a+T$ in \cref{ineq:intFormulaPsi}. This completes the proof of the lemma.
\end{proof}

    For $t\geq T$, we again consider the transformation $Z(t)=(\mathbbm{1}+Q(t))^{-1}\Psi(t)$ with $Q$ from \cref{prop:AQentries} and calculate
  \begin{align}
    \frac{d}{dt}Z(t) = (\mathbbm{1}+Q(t))^{-1}A(t)Q(t)Z(t)+(\mathbbm{1}+Q(t))^{-1}B(t)
  \end{align}
  as in \cref{eqn:derivationZtransform,eqn:systemMod}.

  \begin{lemma}[Good estimate for $Z(t)$ for $t\geq T$]\label{lem:bootstrapZPsi}
    There is a constant $C\geq 0$ such that 
      \begin{align*}
       \|Z(t)\| \leq C\, |\Omega_{I}^{-1/2}(x_{0})|.
     \end{align*}
  \end{lemma}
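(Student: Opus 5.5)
The plan is to repeat the bootstrap argument from the proof of \cref{lem:bootstrapZ}, carrying the inhomogeneous term $B$ along and controlling it only through \cref{lem:integrabilityB}(2). For $t\geq T$ we integrate the modified system and get
\begin{align*}
  Z(t) = Z(T) + \int_{T}^{t}(\mathbbm{1}+Q(s))^{-1}A(s)Q(s)Z(s)\ ds + \int_{T}^{t}(\mathbbm{1}+Q(s))^{-1}B(s)\ ds.
\end{align*}
The first term satisfies $\|Z(T)\|\leq \text{const.}\cdot|\Omega_{I}^{-1/2}(x_{0})|$. This follows from \cref{lem:indHypExtendedPsi} together with the uniform bound on $(\mathbbm{1}+Q)^{-1}$ from \cref{lem:QBound}. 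By \cref{lem:integrabilityB}(2), the third term is bounded by $\text{const.}\cdot|\Omega_{I}^{-1/2}(x_{0})|$ uniformly in $t\geq T$. It is important that we use only the integral bound here and not a pointwise bound on $B$: the entries of $D_{2}$ decay only like $(a+s)^{-1}$ and are merely conditionally integrable. So the inhomogeneity contributes a term of exactly the allowed size, and what remains is
\begin{align*}
  \|Z(t)\| \leq \text{const.}\cdot\Big(|\Omega_{I}^{-1/2}(x_{0})| + \int_{T}^{t}\|A(s)Q(s)\|\,\|Z(s)\|\ ds\Big).
\end{align*}
This is precisely inequality (\ref{ineq:Zintegral}) once a polynomial initial bound on $Z$ is inserted.

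For the initial estimate, we combine the initial estimate $\|\Psi(t)\|\leq C(a+t)^{\mu_{I}}$ with \cref{lem:QBound} and get $\|Z(t)\|\leq C(a+t)^{\mu_{I}}$ for $t\geq T$. By \cref{lem:estNonParabRootsBetter}(2), $\|A(s)Q(s)\|\leq \text{const.}\cdot(a+s)^{-2}$. With these two inputs we run the iteration of \cref{lem:bootstrapZ} verbatim, with the extra $|\Omega_{I}^{-1/2}(x_{0})|$ term present at every step:
\begin{enumerate}
  \item Lower $\nu\to\nu-1$ while $\nu>1$.
  \item Handle the logarithmic case $\nu=1$ through $\log((a+t)/a)\leq \text{const.}\cdot(a+t)^{1/2}$.
  \item Once the exponent is nonpositive, keep gaining a factor $a^{-1}$ per step, using $\int_{0}^{\infty}(a+s)^{-2}\,ds=a^{-1}$.
\end{enumerate}
Each step again produces $\text{const.}\cdot(|\Omega_{I}^{-1/2}(x_{0})|+a^{\tau})$. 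After finitely many steps $\tau\leq -m_{I}$, and the lower bound in \cref{lem:OmegaNesting} absorbs $a^{\tau}$ into $|\Omega_{I}^{-1/2}(x_{0})|$. All constants stay independent of $a$, since $T$ is.

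I expect the main obstacle to be the contribution of $B$, which is exactly why \cref{lem:integrabilityB}(2) is stated with the weight $(\mathbbm{1}+Q)^{-1}$ inside the integral. If that lemma were not available in this form, one would have to integrate the oscillatory part $D_{2}$ by parts against $e^{-i\langle\alpha,x\rangle\langle\alpha,g\lambda\rangle}$, much as $Q$ is built from $A$. In the present plan this difficulty is entirely outsourced to the Appendix, so the lemma itself reduces to the homogeneous bootstrap.
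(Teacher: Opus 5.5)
Your proposal is correct and follows essentially the same route as the paper. You write $Z(t)$ via the integral equation as $Z(T)$ plus the $(\mathbbm{1}+Q)^{-1}AQZ$ term plus the $(\mathbbm{1}+Q)^{-1}B$ term, bound the first and last by $\text{const.}\cdot|\Omega_{I}^{-1/2}(x_{0})|$ using \cref{lem:indHypExtendedPsi}, \cref{lem:QBound} and \cref{lem:integrabilityB}(2), arrive at the analogue of (\ref{ineq:Zintegral}), and rerun the bootstrap of \cref{lem:bootstrapZ}, exactly as the paper does.
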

  \begin{proof}
  Note that
  \begin{align}
     \|Z(T)\|\leq C\, |\Omega_{I}^{-1/2}(x_{0})|
  \end{align}
  as a consequence of \cref{lem:QBound} and \cref{lem:indHypExtendedPsi}. Since
  \begin{align*}
    Z(t) = Z(T) + \int_{T}^{t} (\mathbbm{1}+Q(s))^{-1}A(s)Q(s)Z(s)\ ds + \int_{T}^{t} (\mathbbm{1}+Q(s))^{-1}B(s)\ ds, 
  \end{align*}
  we calculate (using \cref{lem:integrabilityB})
  \begin{align}
    \|Z(t)\| &\leq \|Z(T)\| + \text{const.}\cdot\int_{0}^{t}(a+s)^{\mu_{I}-2}\ ds +  \left\|\int_{T}^{t} (\mathbbm{1}+Q(s))^{-1}B(s)\ ds\right\|\nonumber\\
             &\leq \text{const.}\cdot \left(|\Omega_{I}^{-1/2}(x_{0})| + \int_{0}^{t} (a+s)^{\mu_{I}-2}\ ds\right).
  \end{align}
  Compare this to (\ref{ineq:Zintegral}).
  We proceed with a bootstrap argument as in \cref{lem:bootstrapZ}, successively lowering exponents, and finally obtain the lemma.
\end{proof}
  
  Thus, by combining \cref{lem:QBound}, \cref{lem:bootstrapZPsi} and \cref{lem:indHypExtendedPsi} we conclude
  \begin{lemma}[Good estimate for $\Psi(t)$ for $t\geq 0$]
    There is a constant $C\geq 0$ such that 
      \begin{align*}
       \|\Psi(t)\| \leq C\, |\Omega_{I}^{-1/2}(x_{0})|.
     \end{align*}
  \end{lemma}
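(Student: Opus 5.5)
The estimate follows by assembling what has just been established. I split into the two ranges $t\in[0,T]$ and $t\geq T$, where $T$ is the constant of \cref{lem:QBound}, which does not depend on $a=a(x_{0})$.

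For $0\leq t\leq T$, the bound $\|\Psi(t)\|\leq C\,|\Omega_{I}^{-1/2}(x_{0})|$ is exactly \cref{lem:indHypExtendedPsi}, so nothing remains to be done there.

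For $t\geq T$, I write $\Psi(t)=(\mathbbm{1}+Q(t))Z(t)$, with $Z(t)=(\mathbbm{1}+Q(t))^{-1}\Psi(t)$ the transformed function. \cref{lem:QBound} gives $\|Q(t)\|\leq 1/2$ for $t\geq T$, so $\|\mathbbm{1}+Q(t)\|\leq \|\mathbbm{1}\|+1/2$ is bounded uniformly in $t$ and in $x_{0}$. Combining this with \cref{lem:bootstrapZPsi} yields
\[
\|\Psi(t)\|\leq \|\mathbbm{1}+Q(t)\|\,\|Z(t)\|\leq \text{const.}\cdot|\Omega_{I}^{-1/2}(x_{0})|\qquad(t\geq T).
\]
Taking the larger of the two constants gives the claim for all $t\geq 0$.

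The only point to check is uniformity. The constants in \cref{lem:QBound}, \cref{lem:indHypExtendedPsi} and \cref{lem:bootstrapZPsi} depend only on $R$, $k$, $\lambda$ and $\xi$, not on $x_{0}$ or $a$, so the resulting constant $C$ is independent of $x_{0}\in\a^{0}\cup\a^{i+1}$. This is what the subsequent step requires. There one sets $t=t(x)$ and uses $\Omega_{I}(x_{0})=\Omega_{I}(x)$ from (\ref{eqn:OmegaX0X}) to bound $\partial_{\xi}\Exp_{k}(i\lambda,x)$ on $\a^{i}$ by $|\omega_{I}^{-1/2}(x)\Omega_{I}^{-1/2}(x)|$, and hence by $|\Omega_{k}^{-1/2}(x)|$. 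In doing so, the lower-order Leibniz term $\partial_{\xi}e^{-i\langle x,\lambda\rangle}$ is handled by \cref{thm:mainthm}.

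There is no real obstacle at this stage: the substantive work (the integrability of $B$ in \cref{lem:integrabilityB} and the bootstrap) has already been carried out.
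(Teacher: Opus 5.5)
Your proposal is correct and follows the paper's argument: the paper also obtains this lemma by combining \cref{lem:indHypExtendedPsi} on $[0,T]$ with \cref{lem:bootstrapZPsi} and the uniform bound on $\mathbbm{1}+Q(t)$ from \cref{lem:QBound} for $t\geq T$. Your added remark that all constants are independent of $a=a(x_{0})$ is exactly what the induction step requires.
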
 
  Recall \cref{eqn:OmegaX0X}, i.e. $\Omega_{I}(x_{0})=\Omega_{I}(x)$. Hence, considering the first entry of $\Psi$ and putting $t=t(x)$, we conclude
  \begin{align*}
    |\partial_{\xi}(e^{-i\langle x, \lambda\rangle}\Exp_{k}(i\lambda, x))| \leq \text{const.}\cdot |\Omega_{k}^{-1/2}(x)|. 
  \end{align*}
  Since
  \begin{align*}
    &\partial_{\xi}(e^{-i\langle x, \lambda\rangle}\Exp_{k}(i\lambda, x))= e^{-i\langle x, \lambda\rangle}(\partial_{\xi}-T_{\xi})\Exp_{k}(i\lambda, \cdot)(x),
  \end{align*}
  we may estimate
  \begin{align*}
    |\partial_{\xi}\Exp_{k}(i\lambda, x)| &\leq |T_{\xi}\Exp_{k}(i\lambda, x)| + |(\partial_{\xi}-T_{\xi})\Exp_{k}(i\lambda, x)| \\
                                          &\leq \text{const.}\cdot |\Omega_{k}^{-1/2}(x)|.
  \end{align*}
  Here, the induction hypothesis for $\deg(p)=0$ (which corresponds to \cref{thm:mainthm}) and the eigenvalue equation for the Dunkl kernel were used. 

  We thus conclude the induction step $i+1\to i$.

  This proves the claim for $\deg(p)\leq 1$. Using induction on $\deg(p)$, we may now prove the theorem: Due to the Leibniz rule, we can always obtain a system
  \begin{align*}
    \partial_{H}\Psi(x)=A(x)\Psi(x)+B(x) 
  \end{align*}
  for $\Psi(x)=\omega_{I}^{1/2}(x) p(\partial)\wt\Phi(x)$ with perturbation $B$ which we can estimate as in \cref{lem:integrabilityB}, see the proof of \cref{lem:integrabilityB} in \cref{sec:appendix}. This allows for repeating the arguments.

\section{Proof of \cref{thm:mainThmComplex}}\label{sec:proofCplx}
We prove this by induction on the degree $\deg(p)$. The majority of this section will be devoted to proving the claim for $\deg(p)=0$. Indeed, the inductive step then is just an iteration of the arguments in the proof of \cref{thm:mainThmDeriv} and the base case.
For now, we assume that $\lambda=\lambda_{1}+i\lambda_{2}\in \weylcham +i\areg$ and $\deg(p)=0$.
In principle, we again follow the proof of \cref{thm:mainthm}. That is, we establish bounds for $\Exp_{k}(\lambda, \cdot)$ by inductively passing from regular elements in a fixed sector $S$ to more singular ones. Thus, again fix a sector $S$ with subsets $\a^{0}, \a^{n}, \ldots, \a^{1}$ as in \cref{eqn:defai}, let $x\in\a^{i}$ and define $x_{0}=x-t(x)H_{i}\in\a^{0}\cup\a^{i+1}, t(x)\geq 0, a=a(x_{0})\geq 1$ as in \cref{eqn:defx0,eqn:ax0}.  

However, there is one crucial difference. If we define the differential system (\ref{eqn:matrixSystemPhi}) with respect to this $H_{i}\in\weylcham$ as before, we would obtain $\partial_{H_{i}}\Phi = A\Phi$. But the factors $e^{\langle \alpha, x\rangle\langle \alpha, w\lambda\rangle}$ in the entries of $A$ could exponentially grow as $|x|\to\infty$ for $\lambda\in\a_{\C}$, thus destroying any hope for a (conditionally) integrable matrix $A$. 
  Hence, we shall now instead redefine
  \begin{align}\label{eqn:systemPhiRedef}
    \Phi(x) \coloneqq \Phi^{H_{i},\lambda}(x) \coloneqq (\omega_{I}^{1/2}(x)e^{-i\langle x, w\lambda_{2}\rangle}\Exp_{k}(w\lambda, x))_{w\in W}
  \end{align}
  and obtain
  \begin{align}
    \partial_{H_{i}} \Phi(x) = \Gamma \Phi(x) + A(x)\Phi(x)\quad\text{with}\quad \Gamma=\diag((\langle H_{i}, w\lambda_{1}\rangle)_{w\in W})
  \end{align}
  and
  \begin{align*}
    A^{H_{i},\lambda}_{g, h}(x) \coloneqq\begin{cases}
      k_{\alpha}\frac{\langle \alpha, H_{i}\rangle }{\langle \alpha, x\rangle }e^{-i\langle \alpha, x\rangle \langle \alpha, g\lambda_{2}\rangle} &\text{if}\ h=s_{\alpha}g,\, \alpha\in R_{+}\setminus R_{I},\\
      0 &\text{otherwise.}
    \end{cases}
  \end{align*}
  We again put $\Phi(t)\coloneqq \Phi(x_{0}+tH_{i})$, with the notations as before, and obtain
  \begin{align}\label{eqn:systemComplex}
  \frac{d}{dt} \Phi(t) = (\Gamma+A(t))\Phi(t) \qquad(\forall t\geq 0).
  \end{align}
  In this way, we again read the system $\frac{d}{dt}\Phi = (\Gamma+ A)\Phi$ as a perturbation of the system $\frac{d}{dt}Y=\Gamma Y$ with a conditionally integrable perturbation $A$. Note that (\ref{eqn:matrixSystemPhi}) in this formulation has $\Gamma = 0$. The system thus becomes asymptotically constant, but not necessarily asymptotically zero as before. The diagonal matrix $\Gamma$, which has only real entries, will thus be responsible for the exponential terms in the bounds of \cref{thm:mainThmComplex}.

  Note that we could also arrive at this system by denoting the system (\ref{eqn:systemHi}) by $\frac{d}{dt}\wt\Phi(t)=\wt A(t)\wt\Phi(t)$ and considering 
  \begin{align*}
    \Phi(t)\coloneqq e^{\Gamma_{0}+t\Gamma} \wt\Phi(t)\qquad\text{and}\qquad A(t)=e^{\Gamma_{0}+t\Gamma}\wt A(t)e^{-\Gamma_{0}-t\Gamma}
  \end{align*}
  with $\Gamma_{0}=\diag(\langle x_{0}, w\lambda_{1}\rangle)_{w\in W}$.
 This will be relevant later.

  Previously, we used the transformation $Z = (1+Q)^{-1}\Phi$ with $Q(t)=-\int_{t}^{\infty} A(s)\, ds$\ \ to obtain the system (\ref{eqn:systemMod}) from the system (\ref{eqn:system}).
  This was a decisive step in the proof, as it transforms a system with a conditionally integrable coefficient matrix into a system with a coefficient matrix that belongs to $L^{1}([T, \infty[, \C^{|W|\times |W|})$. Indeed, recall \cref{lem:estNonParabRootsBetter}, which showed that the entries of the coefficient matrix in (\ref{eqn:systemMod}) are $\mathcal{O}((a+t)^{-2})$ whereas those of the coefficient matrix in (\ref{eqn:system}) are only $\mathcal{O}((a+t)^{-1})$. Here, $\mathcal O$ denotes the usual $\mathcal O$-notation, and $a=a(x_{0})\geq 1$ is that of the induction step defined in (\ref{eqn:ax0}).

  This was crucial for the bootstrap argument in \cref{lem:bootstrapZ}, where we successively sharpened the initial estimate on $Z(t)$ from \cref{lem:initialEstMod} until we reached the desired bound. This then led to a bound for $\Phi$ in \cref{lem:bootstrapPhiFinish}.
 
  Performing the same transformation $Z=(\mathbbm{1}+Q)^{-1}\Phi$ with the same $Q(t)=-\int_{t}^{\infty}A(s)\, ds$\ \ on the system (\ref{eqn:systemComplex}), however, does not result in an improvement of the integrability, as the diagonal matrix $\Gamma$ is not necessarily zero. In fact, following the calculations of \cite[Eq. (1.11.6)]{eastham} like we did in (\ref{eqn:derivationZtransform}), now with a non-zero diagonal $\Gamma$, leads us to
  \begin{align*}
    Z' = (\Gamma + (\mathbbm{1}+Q)^{-1}[\Gamma, Q] + S)Z 
  \end{align*}
  with $S=(\mathbbm{1}+Q)^{-1}AQ\in L^{1}([T, \infty[, \C^{|W|\times |W|})$. But the term $[\Gamma, Q]$ possibly has the same poor conditional integrability as $A$ does, as its entries are certain linear combinations of trigonometric integrals, see \cref{prop:AQentries}(2).

  Therefore, we need to apply a different modification to (\ref{eqn:systemComplex}) before we can repeat the bootstrap argument of \cref{lem:bootstrapZ}.
  To this end, we still use the transformation $Z=(\mathbbm{1}+Q)^{-1}\Phi$, but with a different $Q$. 
  \begin{lemma}\label{lem:constructionQcomplex}
    There exists a matrix-valued function $Q\colon [0, \infty[\to\C^{|W|\times |W|}$ such that 
    \begin{align*}
      Q' = [\Gamma, Q]+A\quad \text{and}\quad Q(t)\in \mathcal{O}((a+t)^{-1}).
    \end{align*}
    The constants absorbed in the $\mathcal O$-notation do not depend on $a=a(x_{0})\geq 1$. In particular, there exists a constant $T\geq 0$, independent of $a(x_{0})$, such that $\mathbbm{1}+Q(t)$ is invertible for $t\geq T$ and the $(\mathbbm{1}+Q(t))^{-1}$ are uniformly bounded.
  \end{lemma}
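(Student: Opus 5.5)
Since $\Gamma=\diag(\gamma_w)_{w\in W}$ with $\gamma_w=\langle H_i,w\lambda_1\rangle\in\R$ is diagonal, the matrix equation $Q'=[\Gamma,Q]+A$ decouples into scalar linear ODEs for the entries:
\begin{align*}
  Q_{g,h}'(t)=(\gamma_g-\gamma_h)Q_{g,h}(t)+A_{g,h}(t).
\end{align*}
I will solve each one explicitly by variation of constants, choosing the base point of integration according to the sign of $\delta_{g,h}\coloneqq\gamma_g-\gamma_h$, so that the solution picks out the decaying (bounded) branch:
\begin{align*}
  Q_{g,h}(t)\coloneqq\begin{cases}
    -\int_t^\infty e^{\delta_{g,h}(t-s)}A_{g,h}(s)\,ds & \text{if } \delta_{g,h}\geq 0,\\
    \int_0^t e^{\delta_{g,h}(t-s)}A_{g,h}(s)\,ds & \text{if } \delta_{g,h}<0.
  \end{cases}
\end{align*}
For $\delta_{g,h}=0$ this is exactly the $Q$ of \cref{prop:AQentries}, so \cref{lem:estNonParabRootsBetter}(1) already gives the $\mathcal O((a+t)^{-1})$ bound. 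Differentiating under the integral shows that both formulas satisfy the ODE. It remains to check that the integrals converge, and this is only an issue for $\delta_{g,h}\ge0$.

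For the bound I use that a nonzero entry has the form $A_{g,s_\alpha g}(s)=k_\alpha\frac{\langle\alpha,H_i\rangle}{\langle\alpha,x_0+sH_i\rangle}e^{-i\theta(s)}$ with $\alpha\in R_+\setminus R_I$. Here $\theta(s)=\langle\alpha,x_0+sH_i\rangle\langle\alpha,g\lambda_2\rangle$ is affine in $s$ with slope $\omega\coloneqq\langle\alpha,H_i\rangle\langle\alpha,g\lambda_2\rangle$. This slope is nonzero because $\lambda_2\in\areg$ and $\langle\alpha,H_i\rangle>0$ for $\alpha\notin R_I$. Write $\rho(s)=\langle\alpha,x_0+sH_i\rangle$; by \cref{lem:estNonParabRoots} it is comparable to $a+s$, with $\rho'=\langle\alpha,H_i\rangle$ constant. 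In both cases the integrand is $e^{\delta(t-s)}\rho(s)^{-1}e^{-i\omega s}$ up to a unimodular constant. The key move is to integrate by parts against the oscillating-exponential factor $e^{(-\delta-i\omega)s}$. Its antiderivative has denominator $-\delta-i\omega\neq0$, of modulus at least $|\omega|\geq c>0$ uniformly, because $\delta$ is real and only finitely many $\omega$ occur.
\begin{itemize}
  \item The boundary term at $s=t$ contributes $\mathcal O(\rho(t)^{-1})=\mathcal O((a+t)^{-1})$.
  \item The remaining integral is bounded by $\frac{1}{c}\int |e^{\delta(t-s)}|\,\rho(s)^{-2}\,ds$.
\end{itemize}

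I then bound that remaining integral separately in each case.
\begin{itemize}
  \item Case $\delta\ge0$ (integral over $[t,\infty)$): $|e^{\delta(t-s)}|\le1$, and $\int_t^\infty\rho^{-2}\lesssim(a+t)^{-1}$. The boundary term at $\infty$ vanishes, which also justifies convergence of the improper integral.
  \item Case $\delta<0$ (integral over $[0,t]$): there is an extra boundary term at $s=0$ of size $e^{\delta t}\rho(0)^{-1}\lesssim e^{-|\delta|t}a^{-1}\le(a+t)^{-1}\cdot\frac{a+t}{a}e^{-|\delta|t}$, and $\frac{a+t}{a}e^{-|\delta|t}\le(1+t)e^{-|\delta|t}$ is bounded uniformly in $a\ge1$. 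The integral $\int_0^t e^{-|\delta|(t-s)}(a+s)^{-2}ds$ I split at $s=t/2$. The first half gives $\lesssim e^{-|\delta|t/2}a^{-1}$, bounded as before. The second half gives $\lesssim (a+t/2)^{-2}/|\delta|\lesssim(a+t)^{-1}$.
\end{itemize}
All constants depend only on $R,k,\lambda$ (via $\min|\omega|$, $\min_{\delta\ne0}|\delta|$, $c_\alpha$, $C_\alpha$), not on $a$.

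The main obstacle is the uniformity in $a$, since one cannot simply estimate $|A|$ pointwise. The $\delta<0$ case near $s=0$ is where I expect care, which is why I split the integral. The integration by parts is what substitutes for the trigonometric-integral estimates $|\operatorname{si}|,|\operatorname{Ci}|\le2/\tau$ used before.

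Finally, $\|Q(t)\|\le C/(a+t)\le C/(1+t)$, so choosing $T\coloneqq 2C$ (independent of $a$) gives $\|Q(t)\|\le1/2$ for $t\ge T$. A Neumann series then shows that $\mathbbm 1+Q(t)$ is invertible with $\|(\mathbbm 1+Q(t))^{-1}\|\le2$, exactly as in \cref{lem:QBound}.
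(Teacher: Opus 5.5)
Your proof is correct, but you get the key decay estimate from a different integration by parts than the paper does.

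\textbf{The paper's route.} Following Bodine--Lutz, the paper integrates by parts against $A_{g,h}$ itself. It uses the tail integral $P(t)=-\int_t^\infty A(s)\,ds$, which is the old $Q$ and is already known to be $\mathcal O((a+t)^{-1})$ from the trigonometric-integral bounds, and it differentiates the weight $e^{(\Gamma_g-\Gamma_h)(t-s)}$. This gives
\[
Q_{g,h}=P_{g,h}-(\Gamma_g-\Gamma_h)\int_t^\infty e^{(\Gamma_g-\Gamma_h)(t-s)}P_{g,h}(s)\,ds ,
\]
with an analogous formula when $\Gamma_g<\Gamma_h$. In that case the base point is an auxiliary $t_0$. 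Uniformity in $a$ comes from a sublemma: for some $0\le\rho<\epsilon$, the function $e^{\rho t}(a+t)^{-1}$ is non-decreasing for $t\ge t_0$.

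\textbf{Your route.} You use the explicit shape $A_{g,s_\alpha g}(s)=c\,e^{-i\omega s}/\rho(s)$ with $\rho$ affine. You integrate the combined factor $e^{(-\delta-i\omega)s}$ and differentiate $1/\rho$, which gains a full power $\rho^{-2}$. This buys three things:
\begin{itemize}
  \item all sign cases of $\delta$ are handled by the same computation;
  \item the $\delta=0$ bound is reproved without trigonometric integrals;
  \item the sublemma and $t_0$ are replaced by the elementary bound $(1+t)e^{-|\delta|t}\lesssim 1$ and the split at $t/2$.
\end{itemize}
For $\delta\neq 0$ you do not even need $\omega\neq 0$, since $|\delta+i\omega|\ge|\delta|\ge\epsilon$; the oscillation is essential only when $\delta=0$.

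\textbf{Trade-off.} The paper's argument is more structural. It only uses $P=\mathcal O((a+t)^{-1})$, so it applies to any conditionally integrable perturbation with that tail decay. Yours relies on each entry of $A$ being a single oscillating exponential over an affine function. That does hold here, because distinct $\alpha\in R_+$ give distinct $s_\alpha g$.

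One minor imprecision: the prefactor $k_\alpha\langle\alpha,H_i\rangle e^{-i\theta(0)}$ is bounded, not unimodular. This is harmless.
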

  The construction of this $Q$ works as in the proof of \cite[Thm. 4.26]{bodineLutz}, which provides a different version of the Levinson-type theorem \cite[Thm. 1.11.1]{eastham} we used earlier. For the reader's convenience, a proof of \cref{lem:constructionQcomplex}, which reiterates this construction, is given in the Appendix (\cref{sec:appendix}).
    One should compare the properties of $Q$ from \cref{lem:constructionQcomplex} with those of the function $Q$ in \cref{prop:AQentries}, \cref{lem:estNonParabRootsBetter} and \cref{lem:QBound}. While the entries of $AQ$ are now no longer as explicit, we still know they are $\mathcal O((a+t)^{-2})$, which suffices for our purposes.

  Indeed, using this new $Q$, we again differentiate $\Phi=(\mathbbm{1}+Q)Z$ and use the system (\ref{eqn:systemComplex}) to see
  \begin{align}
    (\mathbbm{1}+Q)Z' + ([\Gamma, Q] + A)Z = \Phi' = (\Gamma + A)(\mathbbm{1}+Q)Z.
  \end{align}
  Rearranging terms, this leaves us with
  \begin{align}\label{eqn:systemComplexMod}
    Z'(t) = \Big(\Gamma + \big(\mathbbm{1}+Q(t)\big)^{-1}A(t)Q(t)\Big) Z(t) \qquad(\forall t\geq T).
  \end{align}
  Thus 
  \begin{align}\label{eqn:systemComplexModShort}
     Z'(t)=\big(\Gamma+S(t)\big)Z(t)\qquad (\forall t\geq T)
  \end{align}
  with $S\in \mathcal O((a+t)^{-2})$. In particular, $S\in L^{1}([T, \infty[, \C^{|W|\times |W|})$.
  Now we can finally adapt the bootstrap arguments from the proof of \cref{lem:bootstrapZ}, which are then quite similar to those of \cite[Sec. 7]{clercBessel}.

  Note that $\langle y, w\lambda_{1}\rangle \leq \langle y, \lambda_{1}\rangle $ for all $y\in\weylcham, w\in W$, as $\lambda_{1}\in\weylcham$.
  We therefore have the following initial estimate as a direct consequence of \cref{eqn:systemPhiRedef} and \cref{prop:dunklKernel}(4).
  \begin{lemma}[Initial estimate for $\Phi(t)$]\label{lem:initialEstComplex}
    We keep the notation of \cref{lem:initialEst}. There exists a constant $C\geq 0$ such that 
    \begin{align}\label{ineq:initialEstComplex}
    \|\Phi(t)\|\leq C\, e^{t\langle H_{i}, \lambda_{1}\rangle}e^{\langle x_{0}, \lambda_{1}\rangle}(a+t)^{\mu_{I}} 
  \end{align}
  \end{lemma}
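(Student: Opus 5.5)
The bound follows entrywise from \cref{prop:dunklKernel}(4), combined with the upper bound of \cref{lem:estNonParabRoots} for the weight factor. Write $x = x_{0}+tH_{i}$. Since $\lambda_{2}$ is real, the factor $e^{-i\langle x, w\lambda_{2}\rangle}$ has modulus one. Hence the $w$-th entry satisfies
\begin{align*}
  |F_{w}(t)| = |\omega_{I}^{1/2}(x)|\, |\Exp_{k}(w\lambda, x)|.
\end{align*}

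For the Dunkl kernel factor we apply \cref{prop:dunklKernel}(4) with $p=1$, which gives
\begin{align*}
  |\Exp_{k}(w\lambda, x)| \leq C\, e^{\max_{v\in W}\Re\langle v w\lambda, x\rangle}.
\end{align*}
The maximum over $v$ of $\Re\langle v\lambda, x\rangle = \langle v\lambda_{1}, x\rangle$ equals $\langle \lambda_{1}, x\rangle$. This is the remark preceding the lemma: $\langle y, w\lambda_{1}\rangle \leq \langle y, \lambda_{1}\rangle$ for $y\in\weylcham$, applied with $y=x$. It applies because $x = x_{0}+tH_{i}\in S\subseteq\weylcham$ for $t\geq 0$, and $\lambda_{1}\in\weylcham$ by the standing assumption. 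The exponential therefore becomes $e^{\langle x_{0}, \lambda_{1}\rangle}e^{t\langle H_{i}, \lambda_{1}\rangle}$ by linearity. The constant $C$ is independent of $w$, since the estimate is uniform in the first argument up to the irrelevant factor $|w\lambda|^{0}=1$.

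For the weight factor we use
\begin{align*}
  |\omega_{I}^{1/2}(x)| = \prod_{\alpha\in R_{+}\setminus R_{I}}|\langle\alpha, x\rangle|^{\Re k_{\alpha}}.
\end{align*}
By the upper bound in \cref{lem:estNonParabRoots}, each factor is at most $C_{\alpha}^{\Re k_{\alpha}}(a+t)^{\Re k_{\alpha}}$. Here we use $\Re k_{\alpha}\geq 0$ and the lower bound $\langle\alpha,x\rangle\geq c_{\alpha}(a+t)>0$, so that no absolute value issue arises. The product is then bounded by a constant times $(a+t)^{\mu_{I}}$.

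Multiplying the two bounds and passing to the fixed matrix/vector norm, which is equivalent to the maximum of the entries up to a constant, yields (\ref{ineq:initialEstComplex}). The constant depends only on $R,k,\lambda$ and not on $a$ or $x_{0}$. There is no real obstacle here. The only point needing care is the Weyl-chamber comparison for the exponential, which requires both $x$ and $\lambda_{1}$ in the closed positive chamber. That is exactly why the section first restricts to $\lambda_{1}\in\weylcham$.
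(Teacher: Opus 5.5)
Your proposal is correct and follows the paper's argument. The paper also derives the bound directly from the definition (\ref{eqn:systemPhiRedef}) of $\Phi$, \cref{prop:dunklKernel}(4) and the chamber inequality $\langle y, w\lambda_{1}\rangle\leq\langle y,\lambda_{1}\rangle$ for $y,\lambda_{1}\in\weylcham$, and controls the weight factor by the upper bound of \cref{lem:estNonParabRoots}, exactly as in \cref{lem:initialEst}.
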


  The induction hypothesis $|\Exp_{k}(\lambda, x_{0})|\leq \text{const.}\cdot e^{\langle x_{0}, \lambda_{1}\rangle }|\Omega_{k}^{-1/2}(x_{0})|$ now entails
  \begin{lemma}[Good estimate for $\Phi(0)$]
    There is a constant $C\geq 0$ such that
    \begin{align}\label{eqn:indHypComplex}
      \|\Phi(0)\| \leq C\, e^{\langle x_{0}, \lambda_{1}\rangle}|\Omega_{I}^{-1/2}(x_{0})|. 
    \end{align}
  \end{lemma}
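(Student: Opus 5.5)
The plan is to read this off directly from the induction hypothesis, mirroring \cref{lem:indHypothesisPhi}. By definition (\ref{eqn:systemPhiRedef}), the entries of $\Phi(0)$ are
\begin{align*}
  F_{w}(x_{0}) = \omega_{I}^{1/2}(x_{0})e^{-i\langle x_{0}, w\lambda_{2}\rangle}\Exp_{k}(w\lambda, x_{0}) \qquad (w\in W).
\end{align*}
The factor $e^{-i\langle x_{0}, w\lambda_{2}\rangle}$ has modulus one because $x_{0},\lambda_{2}\in\a$, so it suffices to bound $|\omega_{I}^{1/2}(x_{0})\Exp_{k}(w\lambda, x_{0})|$ for every $w\in W$.

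The induction hypothesis gives a bound for $\Exp_{k}(\lambda,\cdot)$ with the fixed $\lambda$, whereas $\Phi(0)$ also involves $\Exp_{k}(w\lambda, x_{0})$ for arbitrary $w$. I would handle this by running the whole induction (over the sets $\a^{0}\cup\a^{i+1}$ of the fixed sector) simultaneously for all spectral parameters $w\lambda$, $w\in W$, with the common exponential weight $e^{\max_{v\in W}\Re\langle v\lambda, x_{0}\rangle}$. For $x_{0}\in S\subseteq\weylcham$ and $\lambda_{1}\in\weylcham$ we have $\max_{v}\langle v\lambda_{1}, x_{0}\rangle = \langle\lambda_{1}, x_{0}\rangle$, and this maximum is the same for $w\lambda$ as for $\lambda$, since $w\lambda$ has real part $w\lambda_{1}$ and $\max_{v}\langle vw\lambda_{1}, x_{0}\rangle=\langle\lambda_{1},x_{0}\rangle$. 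Consequently the induction hypothesis reads
\begin{align*}
  |\Exp_{k}(w\lambda, x_{0})|\leq \text{const.}\cdot e^{\langle x_{0}, \lambda_{1}\rangle}|\Omega_{k}^{-1/2}(x_{0})| \qquad (\forall w\in W,\ x_{0}\in\a^{0}\cup\a^{i+1}).
\end{align*}
On the compact set $\a^{0}$ this holds trivially by \cref{prop:dunklKernel}(4), which also starts the induction at $i=n$. The remaining issue is whether the conclusion of the induction step also transfers to all $w\lambda$ at once. I expect it does, since the system (\ref{eqn:systemComplex}) is symmetric in the roles of the $w$'s; if not, the base case of \cref{thm:mainThmComplex} should be formulated for all $W$-conjugates of $\lambda$ from the start.

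With this bound in hand, (\ref{ineq:OmegakOmegaI}) gives $|\Omega_{k}^{-1/2}(x_{0})|\leq|\omega_{I}^{-1/2}(x_{0})\Omega_{I}^{-1/2}(x_{0})|$. Multiplying by $|\omega_{I}^{1/2}(x_{0})|$ removes the $\omega_{I}$-factor, so each entry satisfies $|F_{w}(x_{0})|\leq\text{const.}\cdot e^{\langle x_{0},\lambda_{1}\rangle}|\Omega_{I}^{-1/2}(x_{0})|$. Taking the maximum over the finitely many $w\in W$ and using the equivalence of norms on $\C^{|W|}$ then yields (\ref{eqn:indHypComplex}).

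The main obstacle is the bookkeeping of the exponential weights in the second step. One has to make sure that $\Re\langle w\lambda,\cdot\rangle$ is dominated by $\langle\lambda_{1},\cdot\rangle$ on $S$; this follows from $\lambda_{1}\in\weylcham$ and $x_{0}\in\weylcham$.
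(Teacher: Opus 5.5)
Your proposal is correct and matches the paper's argument, which likewise reads the bound directly off the induction hypothesis. It uses (\ref{ineq:OmegakOmegaI}) and the fact that $e^{-i\langle x_{0}, w\lambda_{2}\rangle}$ has modulus one, exactly as in \cref{lem:indHypothesisPhi}. Your point that the hypothesis must hold for all $W$-conjugates $w\lambda$ with the common weight $e^{\langle x_{0},\lambda_{1}\rangle}$ is something the paper uses only implicitly: it states the hypothesis for $\lambda$ alone, but its induction step bounds every entry of $\Phi$, as it remarks at the end of the $\deg(p)=0$ case. So your worry about transferring the step to all $w\lambda$ is resolved affirmatively.
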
  
  Again, the operators $\mathbbm{1}+Q(t)$ become invertible only for $t\geq T$, thus we first have to extend the induction hypothesis from an estimate for $\|\Phi(0)\|$ to one for $\sup_{t\in [0, T]}\|\Phi(t)\|$.

  \begin{lemma}[Good estimate for $\Phi(t)$ for $0\leq t\leq T$]\label{lem:indHypComplexExtended}
    There is a constant $C\geq 0$ such that
    \begin{align*}
      \|\Phi(t)\| \leq C\, e^{\langle x_{0}+tH_{i}, \lambda_{1}\rangle}|\Omega_{I}^{-1/2}(x_{0})|
    \end{align*}
    for all $t\in [0, T]$.
  \end{lemma}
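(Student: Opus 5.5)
The plan is to repeat the bootstrap of \cref{lem:indHypothesisPhiExtended}, after first removing the exponential factor coming from $\Gamma$. On the compact interval $[0,T]$ the factor $e^{t\langle H_{i},\lambda_{1}\rangle}$ is bounded above and below by constants independent of $a=a(x_{0})$, since $T$ does not depend on $a$. It therefore suffices to show
\begin{align*}
  \|\Phi(t)\|\leq C\, e^{\langle x_{0},\lambda_{1}\rangle}|\Omega_{I}^{-1/2}(x_{0})|\qquad (t\in[0,T]).
\end{align*}
I will work with the integral equation obtained from (\ref{eqn:systemComplex}) by variation of constants,
\begin{align*}
  \Phi(t)=e^{t\Gamma}\Phi(0)+\int_{0}^{t}e^{(t-s)\Gamma}A(s)\Phi(s)\,ds .
\end{align*}
Because $\Gamma$ is a real diagonal matrix and $0\leq s\leq t\leq T$, we have $\|e^{(t-s)\Gamma}\|\leq e^{T\max_{w}|\langle H_{i},w\lambda_{1}\rangle|}$, which is a constant. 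Alternatively, Gronwall-type estimates applied directly to $\frac{d}{dt}\Phi=(\Gamma+A)\Phi$ would work equally well, since $\|\Gamma\|$ is a constant.

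Next come the ingredients. By the lower bound of \cref{lem:estNonParabRoots}, the entries of $A(s)$ are $\mathcal O(1/a)$ uniformly in $s\geq 0$; only $\lambda_{2}$ enters the phase, so these entries have modulus $|k_{\alpha}|\langle\alpha,H_{i}\rangle/\langle\alpha,x\rangle$. \cref{lem:initialEstComplex} gives the rough bound $\|\Phi(s)\|\leq C e^{\langle x_{0},\lambda_{1}\rangle}(a+s)^{\mu_{I}}$ on $[0,T]$, where the factor $e^{s\langle H_{i},\lambda_{1}\rangle}$ has been absorbed. The good estimate (\ref{eqn:indHypComplex}) controls $\Phi(0)$. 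Putting these into the integral equation yields
\begin{align*}
  \|\Phi(t)\|\leq \text{const.}\cdot e^{\langle x_{0},\lambda_{1}\rangle}\Big(|\Omega_{I}^{-1/2}(x_{0})|+\frac{T(a+T)^{\mu_{I}}}{a}\Big)\leq \text{const.}\cdot e^{\langle x_{0},\lambda_{1}\rangle}\big(|\Omega_{I}^{-1/2}(x_{0})|+a^{\mu_{I}-1}\big).
\end{align*}
Feeding this back into the integral equation lowers the exponent of $a$ by one at each iteration, and the common factor $e^{\langle x_{0},\lambda_{1}\rangle}$ just comes along. After finitely many steps the exponent reaches some $\nu\leq -m_{I}$. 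The lower bound $|\Omega_{I}^{-1/2}(x_{0})|\geq c\,a^{-m_{I}}$ of \cref{lem:OmegaNesting} then absorbs the $a^{\nu}$ term, and reinserting $e^{t\langle H_{i},\lambda_{1}\rangle}\asymp 1$ gives the statement.

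The only point I expect to need care is that every constant is independent of $a$ and of $x_{0}$. This holds for $T$ by \cref{lem:constructionQcomplex}, and for the propagator norm $\|e^{(t-s)\Gamma}\|$, which depends only on $T$, $H_{i}$ and $\lambda_{1}$. It also holds for the $\mathcal O(1/a)$ bound on $A$, since the oscillatory factor has modulus one. With these checked, the argument is a verbatim copy of the proof of \cref{lem:indHypothesisPhiExtended} with the extra factor $e^{\langle x_{0},\lambda_{1}\rangle}$ carried along.
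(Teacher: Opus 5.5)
Your proposal is correct and follows the paper's proof essentially verbatim: variation of constants for $\Phi'=\Gamma\Phi+A\Phi$, the $\mathcal O(1/a)$ bound on $A$ combined with the initial estimate and the induction hypothesis for $\Phi(0)$, and then the bootstrap that lowers the exponent of $a$ until the lower bound of \cref{lem:OmegaNesting} absorbs it. The only cosmetic difference is that you bound $\|e^{(t-s)\Gamma}\|$ by a constant on $[0,T]$, whereas the paper uses $\|e^{t\Gamma}\|\leq \text{const.}\cdot e^{t\langle H_{i},\lambda_{1}\rangle}$; since $T$ is independent of $a$, either bound works.
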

  \begin{proof}

    Since $\Phi'(t)=\Gamma\Phi(t)+A(t)\Phi(t)$, and using the initial estimate from \cref{lem:initialEstComplex} and \cref{lem:estNonParabRootsBetter}(1), we in particular obtain
  \begin{align}
    \Phi'(t) = \Gamma\Phi(t) + b(t) 
  \end{align}
  with 
  \begin{align}
     \|b(t)\|\leq Ca^{-1}e^{\langle x_{0}+tH_{i}, \lambda_{1}\rangle}(a+t)^{\mu_{I}}.
  \end{align}
   Thus, using the well-known solution form of constant coefficient linear ODEs 
   \begin{align}\label{eqn:intFormPhiComplex}
    \Phi(t) = e^{t\Gamma}\Phi(0) + \int_{0}^{t}e^{(t-s)\Gamma}b(s)\ ds 
  \end{align}
  and the fact $\|e^{t\Gamma}\|\leq \text{const.}\cdot e^{t\langle H_{i}, \lambda_{1}\rangle}$, we obtain
  \begin{align}\label{ineq:phiComplex}
    \|\Phi(t)\| &\leq \text{const.}\cdot e^{t\langle H_{i}, \lambda_{1}\rangle }\left(\|\Phi(0)\| + \int_{0}^{t}e^{-s\langle H_{i}, \lambda_{1}\rangle } \|b(s)\|\ ds\right)\\
                &\leq \text{const.}\cdot e^{t\langle H_{i}, \lambda_{1}\rangle}e^{\langle x_{0}, \lambda_{1}\rangle}\left(|\Omega_{I}^{-1/2}(x_{0})|+\int_{0}^{t} a^{-1}(a+s)^{\mu_{I}}\ ds\right)\nonumber\\
                &\leq \text{const.} \cdot e^{\langle x_{0}+tH_{i}, \lambda_{1}\rangle } (|\Omega_{I}^{-1/2}(x_{0})| + a^{\mu_{I}-1})
  \end{align}
  for all $t\leq T$ (again $T$ is independent of $a=a(x_{0})\geq 1$ and treated as a constant). We successively shrink the exponent as in the proof of \cref{lem:indHypothesisPhiExtended} by plugging increasingly better estimates for $\|b(s)\|$ into (\ref{ineq:phiComplex}) to eventually obtain
  \begin{align*}
    \|\Phi(t)\| \leq \text{const.}\cdot e^{\langle x_{0}+tH_{i}, \lambda_{1}\rangle }|\Omega_{I}^{-1/2}(x_{0})|
  \end{align*}
  for all $t\in [0, T]$.
  \end{proof}

  To handle $t\geq T$, we now employ the system (\ref{eqn:systemComplexModShort}). 
  \begin{lemma}[Good estimate for $Z(t)$ for $t\geq T$]\label{lem:bootstrapZComplex}
    There is a constant $C\geq 0$ such that
    \begin{align*}
      \|Z(t)\|\leq C\, e^{\langle x_{0}+tH_{i},\lambda_{1}\rangle}|\Omega_{I}^{-1/2}(x_{0})| 
    \end{align*}
    for all $t\geq T$.
  \end{lemma}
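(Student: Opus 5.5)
We follow the proof of \cref{lem:bootstrapZ}, but first remove the exponential growth coming from $\Gamma$. Since $\Gamma$ is diagonal with real entries $\langle H_{i}, w\lambda_{1}\rangle\leq\langle H_{i},\lambda_{1}\rangle$ (because $H_{i},\lambda_{1}\in\weylcham$), we have $\|e^{(t-s)\Gamma}\|\leq\text{const.}\cdot e^{(t-s)\langle H_{i},\lambda_{1}\rangle}$ for $t\geq s$. The plan is to write (\ref{eqn:systemComplexModShort}) in variation-of-constants form,
\begin{align*}
  Z(t) = e^{(t-T)\Gamma}Z(T) + \int_{T}^{t} e^{(t-s)\Gamma}S(s)Z(s)\ ds\qquad(t\geq T),
\end{align*}
and to work with the renormalised quantity $Y(t)\coloneqq e^{-\langle x_{0}+tH_{i},\lambda_{1}\rangle}Z(t)$. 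Multiplying through by $e^{-\langle x_{0}+tH_{i},\lambda_{1}\rangle}$ and using the bound on $e^{(t-s)\Gamma}$, we obtain
\begin{align*}
  \|Y(t)\|\leq \text{const.}\cdot\Big(\|Y(T)\| + \int_{T}^{t}\|S(s)\|\,\|Y(s)\|\ ds\Big),
\end{align*}
which is exactly the type of integral inequality treated in \cref{lem:bootstrapZ}, now with $\|S(s)\|\leq\text{const.}\cdot(a+s)^{-2}$ by \cref{lem:constructionQcomplex}. All constants are independent of $a$.

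The two inputs are obtained as follows. For the starting value, \cref{lem:indHypComplexExtended} at $t=T$, together with the uniform bound on $(\mathbbm{1}+Q(T))^{-1}$ from \cref{lem:constructionQcomplex}, gives $\|Y(T)\|\leq\text{const.}\cdot|\Omega_{I}^{-1/2}(x_{0})|$. For the initial rough estimate, \cref{lem:initialEstComplex} together with the same uniform bound on $(\mathbbm{1}+Q(t))^{-1}$ gives $\|Y(t)\|\leq\text{const.}\cdot(a+t)^{\mu_{I}}$ for all $t\geq T$.

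With these, we run the bootstrap of \cref{lem:bootstrapZ} verbatim with $Y$ in place of $Z$. We insert the current bound $(a+s)^{\nu}$ into the integral. This yields $\|Y(t)\|\leq\text{const.}\cdot(|\Omega_{I}^{-1/2}(x_{0})|+(a+t)^{\nu-1})$ when $\nu>1$. When $\nu=1$, the logarithm is handled by $\log((a+t)/a)\leq\text{const.}\cdot(a+t)^{1/2}$. Once the exponent is at most $0$, each further step improves the bound to $a^{\tau-1}$. The terms involving $|\Omega_{I}^{-1/2}(x_{0})|$ stay bounded because $\int_{0}^{\infty}(a+s)^{-2}\,ds\leq 1$. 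We stop as soon as $\tau\leq -m_{I}$. At that point \cref{lem:OmegaNesting} absorbs $a^{\tau}$ into $|\Omega_{I}^{-1/2}(x_{0})|$, giving $\|Y(t)\|\leq C|\Omega_{I}^{-1/2}(x_{0})|$, which is the claim after multiplying back by the exponential.

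The only genuinely new point, and the main one to check carefully, is the first step: that the propagator $e^{(t-s)\Gamma}$ is dominated by the single scalar exponential $e^{(t-s)\langle H_{i},\lambda_{1}\rangle}$. This is what allows us to factor out $e^{\langle x_{0}+tH_{i},\lambda_{1}\rangle}$ uniformly without any loss. It rests on $\lambda_{1}$ being dominant and $H_{i}\in\weylcham$, and it uses that $\Gamma$ is diagonal and real, so there are no polynomial Jordan factors. The rest is the scalar Gronwall-type iteration already carried out in \cref{lem:bootstrapZ}.
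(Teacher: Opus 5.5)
Your proposal is correct and follows essentially the same route as the paper. You use variation of constants for $Z'=(\Gamma+S)Z$ and dominate $e^{(t-s)\Gamma}$ by the scalar exponential $e^{(t-s)\langle H_{i},\lambda_{1}\rangle}$ (using that $H_{i}$ and $\lambda_{1}$ lie in $\weylcham$); you take the starting bound at $t=T$ from the extended induction hypothesis and the rough bound from the initial estimate, and then run the bootstrap of the real case. The only cosmetic difference is that you renormalise to $Y=e^{-\langle x_{0}+tH_{i},\lambda_{1}\rangle}Z$ and get a homogeneous Gronwall-type inequality, whereas the paper keeps $Z$ and treats $SZ$ as an inhomogeneity $b$ bounded by the current estimate; the iteration is identical.
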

  \begin{proof}
    Recall \cref{eqn:systemComplexModShort}, which stated $Z'(t)=(\Gamma + S(t))Z(t)$ with $S(t)\in\mathcal O((a+t)^{-2})$. From this and the initial estimate in \cref{lem:initialEstComplex}, we conclude that
  \begin{align*}
    Z'(t) = \Gamma Z(t) + b(t) 
  \end{align*}
  for all $t\geq T$ with some $b(t)$ satisfying
  \begin{align}\label{ineq:estBComplexMod}
     \|b(t)\|\leq C e^{\langle x_{0}+tH_{i}, \lambda_{1}\rangle }(a+t)^{\mu_{I}-2}.
  \end{align}
  Now
  \begin{align}\label{eqn:intFormulaZMod}
    Z(t) = e^{(t-T)\Gamma}Z(T) + \int_{T}^{t}e^{(t-s)\Gamma}b(s)\ ds,
  \end{align}
  thus
  \begin{align}\label{ineq:intFormulaZMod}
    \|Z(t)\| &\leq e^{t\langle H_{i}, \lambda_{1}\rangle} \left(\|Z(T)\| + \int_{T}^{t} e^{-s\langle H_{i}, \lambda_{1}\rangle}\|b(s)\|\ ds\right)\\
             &\leq \text{const.}\cdot e^{t\langle H_{i},\lambda_{1}\rangle}\left(e^{\langle x_{0}, \lambda_{1}\rangle}|\Omega_{I}^{-1/2}(x_{0})|+\int_{0}^{t}e^{\langle x_{0}, \lambda_{1}\rangle} (a+s)^{\mu_{I}-2}\ ds\right)\nonumber\\
             &\leq \text{const.}\cdot e^{\langle x_{0}+tH_{i}, \lambda_{1}\rangle}\left(|\Omega_{I}^{-1/2}(x_{0})|+\int_{0}^{t}(a+s)^{\mu_{I}-2}\ ds\right).
  \end{align}
  Compare this to (\ref{ineq:Zintegral}). Proceeding with the arguments as in the proof of \cref{lem:bootstrapZ}, we can thus obtain
  \begin{align}
    \|Z(t)\| &\leq \text{const.}\cdot e^{t\langle H_{i}, \lambda_{1}\rangle}e^{\langle x_{0}, \lambda_{1}\rangle }|\Omega_{I}^{-1/2}(x_{0})|\nonumber\\
             &=\text{const.}\cdot e^{\langle x_{0}+tH_{i}, \lambda_{1}\rangle }|\Omega_{I}^{-1/2}(x_{0}+tH_{i})|
  \end{align}
  for all $t\geq T$. 
  \end{proof}

  Again, by uniform boundedness (see \cref{lem:constructionQcomplex}), we conclude that
  \begin{lemma}[Good estimate for $\Phi(t)$ for $t\geq 0$]
     There exists a constant $C\geq 0$ such that
     \begin{align*}
       \|\Phi(t)\|\leq C\, e^{\langle x_{0}+tH_{i}, \lambda_{1}\rangle}|\Omega_{I}^{-1/2}(x_{0})| 
     \end{align*}
     for all $t\geq 0$.
  \end{lemma}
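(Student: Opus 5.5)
Since this lemma only assembles bounds already proved for $t\in[0,T]$ and for $Z$ on $[T,\infty[$, I would split the range of $t$ into these two intervals and combine the earlier estimates.

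\emph{Case $0\le t\le T$.} Here the claim is exactly \cref{lem:indHypComplexExtended}, so nothing new is needed.

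\emph{Case $t\ge T$.} Here I would use the transformation $\Phi(t)=(\mathbbm{1}+Q(t))Z(t)$, with $Q$ the matrix function constructed in \cref{lem:constructionQcomplex}. That lemma gives $Q(t)\in\mathcal O((a+t)^{-1})$ with constants independent of $a=a(x_{0})\geq 1$. In particular $\|Q(t)\|\leq \text{const.}$ for all $t\geq T$, uniformly in $x_{0}$. This yields
\begin{align*}
  \|\Phi(t)\| \leq \|\mathbbm{1}+Q(t)\|\,\|Z(t)\| \leq \text{const.}\cdot \|Z(t)\|.
\end{align*}
Inserting the bound of \cref{lem:bootstrapZComplex} then gives $\|\Phi(t)\|\leq C\,e^{\langle x_{0}+tH_{i},\lambda_{1}\rangle}|\Omega_{I}^{-1/2}(x_{0})|$ for all $t\geq T$.

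Taking the larger of the two constants gives the claim for all $t\geq 0$. The one point to check, which is really the only potential obstacle, is that every constant is independent of $x_{0}$ and hence of $a$. This holds because $T$ and the $\mathcal O$-constants in \cref{lem:constructionQcomplex} are independent of $a$, and so are the constants in \cref{lem:indHypComplexExtended} and \cref{lem:bootstrapZComplex}. Note that it is the boundedness of $\mathbbm{1}+Q$ itself, not of its inverse, that is used here. The inverse was needed earlier, to pass from $\Phi(T)$ to $Z(T)$ in the proof of \cref{lem:bootstrapZComplex}.
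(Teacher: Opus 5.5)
Your proposal is correct and follows the paper's argument. The range $t\in[0,T]$ is \cref{lem:indHypComplexExtended}, and for $t\geq T$ you write $\Phi=(\mathbbm{1}+Q)Z$ and combine the $a$-independent bound $Q(t)\in\mathcal O((a+t)^{-1})$ from \cref{lem:constructionQcomplex} with \cref{lem:bootstrapZComplex}. Your remark that this step needs the boundedness of $\mathbbm{1}+Q$ itself, not of its inverse, is accurate and slightly more precise than the paper's phrasing.
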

  This furnishes the proof for $\deg(p)=0$. Indeed, we now consider all entries of $\Phi$, not just the first one, to see
  \begin{align*}
    |\Exp_{k}(w\lambda, x)| \leq C e^{\langle \lambda_{1}, x\rangle}|\Omega_{k}^{-1/2}(x)|
  \end{align*}
  for $w\in W$ and $x\in\weylcham$. Using \cref{prop:dunklKernel}(2), we obtain the desired result for $x\in\a$ and thus for $\lambda=\lambda_{1}+i\lambda_{2}\in\a+i\areg$.

  To finally prove the theorem for arbitrary $p\in \C[\a]$, we again use induction on the degree $\deg(p)$ as in \cref{thm:mainThmDeriv}. Indeed, we start out with the system (\ref{eqn:systemDeriv}), which we rename
  \begin{align*}
    \wt\Psi'(t) = \wt A(t)\wt\Psi(t) + \wt B(t).
  \end{align*}
  Note that we now take the spectral parameter $\lambda=\lambda_{1}+i\lambda_{2}\in\a+i\areg$, whereas in (\ref{eqn:systemDeriv}) we used $i\lambda\in i\areg$ as the spectral parameter.
  We then define 
  \begin{align*}
    \Psi(t) &\coloneqq e^{\Gamma_{0}+t\Gamma}\wt\Psi(t) \\
            &= \Big(e^{\langle x_{0}+tH_{i}, w\lambda_{1}\rangle}\omega_{I}^{1/2}(x_{0}+tH_{i})p(\partial)(e^{-\langle x_{0}+tH_{i}, w\lambda\rangle }\Exp_{k}(w\lambda, x_{0}+tH_{i}))\Big)_{w\in W},
  \end{align*}
  where $\Gamma_{0}=\diag(\langle x_{0}, w\lambda_{1}\rangle)_{w\in W}$. Next, we put
  \begin{align*}
    A(t)&=e^{\Gamma_{0}+t\Gamma}\wt A(t)e^{-\Gamma_{0}-t\Gamma}\qquad\text{and} \qquad B(t)=e^{\Gamma_{0}+t\Gamma}\wt B(t).
  \end{align*}
  Note that this $A$ is the same $A$ as in (\ref{eqn:systemComplex}). Indeed, the matrix $\wt A$ is exactly the coefficient matrix of the system (\ref{eqn:matrixSystemPhi}). Thus, $\wt A$ is also the coefficient matrix of the system (\ref{eqn:systemDeriv}). Conjugation by $e^{\Gamma_{0}+t\Gamma}$ removes the diagonal terms of $\Gamma$ in the system.

  We thus obtain
  \begin{align}
    \frac{d}{dt}\Psi(t) = (\Gamma+A(t))\Psi(t) + B(t) 
  \end{align}
  with a rest term $B$ that satisfies similar properties as in \cref{lem:integrabilityB}. Indeed, we have
  \begin{lemma}\label{lem:integrabilityBcomplex}
    Let $Q$ and $T\geq 0$ be as in \cref{lem:constructionQcomplex}. We then have
    \begin{enumerate}
      \item $\displaystyle \left\|\int_{0}^{t} e^{(t-s)\Gamma}B(s)\ ds\right\| \leq \text{const.}\cdot e^{\langle x_{0}+tH_{i}, \lambda_{1}\rangle}|\Omega_{I}^{-1/2}(x_{0})|$ for all $0\leq t\leq T$.
      \item $\displaystyle \left\|\int_{T}^{t} e^{(t-s)\Gamma}(\mathbbm{1}+Q(s))^{-1} B(s)\ ds\right\| \leq \text{const.}\cdot e^{\langle x_{0}+tH_{i}, \lambda_{1}\rangle}|\Omega_{I}^{-1/2}(x_{0})|$ for all $t\geq T$.
    \end{enumerate}
  \end{lemma}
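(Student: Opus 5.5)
The plan is to reduce \cref{lem:integrabilityBcomplex} to the mechanism behind \cref{lem:integrabilityB}, keeping careful track of the real exponentials introduced by the conjugation with $e^{\Gamma_{0}+t\Gamma}$.

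\emph{Step 1: absorb the propagator.} Since $\Gamma$ is diagonal with entries $\gamma_{g}=\langle H_{i}, g\lambda_{1}\rangle$ and $B(s)=e^{\Gamma_{0}+s\Gamma}\wt B(s)$, we have the exact identity
\begin{align*}
  e^{(t-s)\Gamma}B(s)=e^{\Gamma_{0}+t\Gamma}\wt B(s).
\end{align*}
Thus the $g$-th entry of $\int_{0}^{t}e^{(t-s)\Gamma}B(s)\,ds$ equals $e^{\langle x_{0}+tH_{i}, g\lambda_{1}\rangle}\int_{0}^{t}\wt B_{g}(s)\,ds$.

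Next I would bound $\wt B_{g}$. As in the four-term decomposition preceding \cref{lem:integrabilityB}, its entries consist of $\omega_{I}^{1/2}$, factors $1/\langle\alpha,x\rangle$, possibly oscillating exponentials, and lower-order derivatives of $e^{-\langle x, h\lambda\rangle}\Exp_{k}(h\lambda,x)$. The induction hypothesis on $\deg(p)$ (that is, \cref{thm:mainThmComplex} for lower degree) bounds each such lower-order piece by $e^{\langle x,\lambda_{1}\rangle-\langle x, h\lambda_{1}\rangle}|\Omega_{k}^{-1/2}(x)|$. The prefactor $e^{\langle x_{0}+tH_{i}, g\lambda_{1}\rangle}$, together with the exponential from $\wt A$ when $h=s_{\alpha}g$, then combines into
\begin{align*}
  e^{\langle x_{0}+sH_{i},\lambda_{1}\rangle+(t-s)\langle H_{i}, g\lambda_{1}\rangle}\leq e^{\langle x_{0}+tH_{i},\lambda_{1}\rangle},
\end{align*}
using $\langle H_{i}, g\lambda_{1}\rangle\leq\langle H_{i},\lambda_{1}\rangle$ for $\lambda_{1}\in\weylcham$. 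Combined with \eqref{ineq:OmegakOmegaI} and \eqref{eqn:OmegaX0X}, the non-oscillatory parts ($D_{1}, B_{2}, B_{3}, B_{4}$) of $e^{(t-s)\Gamma}B(s)$ are therefore bounded entrywise by
\begin{align*}
  \text{const.}\cdot(a+s)^{-2}e^{\langle x_{0}+tH_{i},\lambda_{1}\rangle}|\Omega_{I}^{-1/2}(x_{0})|.
\end{align*}
These are integrable uniformly in $a\geq 1$, which settles their contribution to both (1) and (2). For (1) alone, on $[0,T]$, even the crude $\mathcal O(a^{-1})$ bound suffices for every term, including $D_{2}$, because $T$ is independent of $a$.

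\emph{Step 2: the oscillatory term $D_{2}$ for $t\geq T$.} Its entries have the shape
\begin{align*}
  \frac{k_{\alpha}}{\langle\alpha,x_{0}+sH_{i}\rangle}e^{-i\langle\alpha,x_{0}+sH_{i}\rangle\langle\alpha,g\lambda_{2}\rangle}\,\Phi_{h}(s),
\end{align*}
which is only $\mathcal O((a+s)^{-1})$. I would integrate by parts against the oscillating factor. Its $s$-frequency $\langle\alpha,H_{i}\rangle\langle\alpha,g\lambda_{2}\rangle$ is non-zero, since $\alpha\notin R_{I}$ and $\lambda_{2}\in\areg$. Hence the antiderivative of $e^{-i(\cdots)}/\langle\alpha,x\rangle$ is $\mathcal O((a+s)^{-1})$, and the boundary terms are controlled by the good estimate for $\Phi$ already proved for $\deg(p)=0$. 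The derivative then falls on $e^{\Gamma_{0}+t\Gamma}$-weighted $\Phi_{h}(s)$, i.e. on $\wt\Phi_{h}$. By the system $\wt\Phi'=\wt A\wt\Phi$, together with the same exponential bookkeeping as in Step 1, this derivative is again $\mathcal O((a+s)^{-1})\,e^{\langle x_{0}+tH_{i},\lambda_{1}\rangle}|\Omega_{I}^{-1/2}(x_{0})|$. The resulting product is $\mathcal O((a+s)^{-2})$ and hence integrable.

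\emph{Step 3: the factor $(\mathbbm{1}+Q)^{-1}$ in (2).} I would write $(\mathbbm{1}+Q)^{-1}=\mathbbm{1}-Q(\mathbbm{1}+Q)^{-1}$. The identity part is exactly Steps 1–2. For the remainder, \cref{lem:constructionQcomplex} gives $\|Q(\mathbbm{1}+Q)^{-1}\|=\mathcal O((a+s)^{-1})$, and each entry of $B$ is $\mathcal O((a+s)^{-1})$ times the good weight, so the product is absolutely integrable. Here rows mix, but $e^{(t-s)\gamma_{g}}\leq e^{(t-s)\langle H_{i},\lambda_{1}\rangle}$ and $\|B_{h}(s)\|\lesssim e^{\langle x_{0}+sH_{i},\lambda_{1}\rangle}(a+s)^{-1}|\Omega_{I}^{-1/2}(x_{0})|$, so their product is again bounded by the correct exponential $e^{\langle x_{0}+tH_{i},\lambda_{1}\rangle}$.

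I expect the main obstacle to be Step 2: doing the integration by parts so that differentiating the real exponential weights does not produce a non-decaying term. The identity $e^{(t-s)\Gamma}e^{\Gamma_{0}+s\Gamma}=e^{\Gamma_{0}+t\Gamma}$ is what should make this work, since it reduces the problem to the purely oscillatory setting of \cref{lem:integrabilityB}.
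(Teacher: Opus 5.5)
Your Step 1 (the identity $e^{(t-s)\Gamma}B(s)=e^{\Gamma_0+t\Gamma}\wt B(s)$ and the exponential bookkeeping) is sound, and so are your treatment of (1) and the splitting $(\mathbbm{1}+Q)^{-1}=\mathbbm{1}-Q(\mathbbm{1}+Q)^{-1}$ in Step 3. The gap is in Step 2.

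Suppose you integrate by parts only against the purely oscillating factor $\theta(s)=e^{-i\langle\alpha,x_0+sH_i\rangle\langle\alpha,g\lambda_2\rangle}/\langle\alpha,x_0+sH_i\rangle$. Then the function being differentiated is
\[
F(s)=e^{(t-s)\gamma_g}\Phi_{s_\alpha g}(s)=e^{\langle x_0+tH_i,g\lambda_1\rangle}\,e^{-\langle\alpha,x_0+sH_i\rangle\langle\alpha,g\lambda_1\rangle}\,(\omega_I^{1/2}\wt\Phi_{s_\alpha g})(x_0+sH_i).
\]
This is \emph{not} a constant multiple of $\omega_I^{1/2}\wt\Phi_{s_\alpha g}$. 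The off-diagonal entry $\wt A_{g,s_\alpha g}$ couples two rows with different $\gamma$'s, and its real exponential survives the identity $e^{(t-s)\Gamma}e^{\Gamma_0+s\Gamma}=e^{\Gamma_0+t\Gamma}$. From $\Phi'=(\Gamma+A)\Phi$ you get
\[
F'(s)=e^{(t-s)\gamma_g}\big((\gamma_{s_\alpha g}-\gamma_g)\Phi_{s_\alpha g}(s)+(A\Phi)_{s_\alpha g}(s)\big),\qquad \gamma_{s_\alpha g}-\gamma_g=-\langle\alpha,H_i\rangle\langle\alpha,g\lambda_1\rangle .
\]
The first term is of size $e^{\langle x_0+tH_i,\lambda_1\rangle}|\Omega_I^{-1/2}(x_0)|$ with no decay. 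Multiplied by the antiderivative $\Theta(s)=\mathcal O((a+s)^{-1})$ and bounded in absolute value, as your argument does, it only gives an $\mathcal O((a+s)^{-1})$ integrand. That is not integrable on $[T,\infty[$ whenever $\langle\alpha,g\lambda_1\rangle\neq0$, which is the generic case. So your claim that the derivative is $\mathcal O((a+s)^{-1})$ times the weight is false. The problem is not reduced to the purely oscillatory setting of \cref{lem:integrabilityB}; this is exactly the obstacle you anticipated, and the identity does not remove it.

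The missing idea is to integrate by parts against the full factor $e^{-\langle\alpha,x_0+sH_i\rangle\langle\alpha,g\lambda\rangle}/\langle\alpha,x_0+sH_i\rangle$ that appears in $\wt B_g$. Its complex $s$-frequency $\nu=-\langle\alpha,H_i\rangle\langle\alpha,g\lambda\rangle$ is nonzero because its imaginary part is. Take the antiderivative as $-\int_s^\infty$ if $\Re\nu\le 0$ and as $\int_{t_0}^s$ if $\Re\nu>0$, and show it is $\mathcal O(e^{s\,\Re\nu}(a+s)^{-1})$ uniformly in $a\geq 1$. This is precisely the case distinction, using \cref{sublemma:rho}, in the proof of \cref{lem:constructionQcomplex}. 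Only then does the derivative fall on $\omega_I^{1/2}\wt\Phi_{s_\alpha g}$ alone, and $(\omega_I^{1/2}\wt\Phi)'=\wt A\,\omega_I^{1/2}\wt\Phi$ together with your Step 1 bookkeeping gives an $\mathcal O((a+s)^{-2})$ integrand.

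Equivalently, in matrix form: write the oscillatory part of $B$ as $N\Phi$, with $N$ of the same shape as $A$. Construct $K$ with $K'=[\Gamma,K]+N$ and $K=\mathcal O((a+s)^{-1})$, as in \cref{lem:constructionQcomplex}, and use
\[
\int_T^t e^{(t-s)\Gamma}N(s)\Phi(s)\,ds=\Big[e^{(t-s)\Gamma}K(s)\Phi(s)\Big]_{s=T}^{s=t}-\int_T^t e^{(t-s)\Gamma}K(s)A(s)\Phi(s)\,ds .
\]
The paper states this lemma without proof, only by analogy with \cref{lem:integrabilityB}. The $[\Gamma,\cdot]$-twisted antiderivative is what that analogy needs once $\Gamma\neq 0$.
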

  We can now proceed as before to finally conclude the proof. Indeed, note that the transformation $(\mathbbm{1}+Q)Z=\Psi$ with $Q$ from \cref{lem:constructionQcomplex} yields
  \begin{align*}
    (\mathbbm{1}+Q)Z' + ([\Gamma, Q]+A)Z = \Psi' = (\Gamma+A)(\mathbbm{1}+Q)Z+B.
  \end{align*}
  Hence,
  \begin{align}
    Z' = \Gamma Z + (\mathbbm{1}+Q)^{-1}AQ Z + (\mathbbm{1}+Q)^{-1}B,
  \end{align}
  and thus
  \begin{align}
    Z'(t) = \Gamma Z(t) + b(t) + (\mathbbm{1}+Q(t))^{-1}B(t)
  \end{align}
  The rest term $b(t)$ is bounded by $Ce^{\langle x_{0}+tH_{i}, \lambda_{1}\rangle}(a+t)^{\mu_{I}-2}$ as a consequence of \cref{lem:initialEstComplex}, $A(t)Q(t)\in\mathcal O((a+t)^{-2})$ and \cref{lem:constructionQcomplex}. 
  We now repeat the arguments from \cref{lem:indHypComplexExtended} and \cref{lem:bootstrapZComplex} and use the estimates from \cref{lem:integrabilityBcomplex}.

\section{Applications}\label{sec:applications}
In this section, we give some applications of our estimates.

\begin{theorem}\label{prop:lp}
  Let $\lambda\in\areg$, $p>0$ and $\Re\,k>\frac 1 p$. Then $\Exp_{k}(i\lambda, \cdot)\in L^{p}(\a)$. 
\end{theorem}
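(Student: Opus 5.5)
By \cref{thm:mainthm}, there is a constant $C$ with $|\Exp_{k}(i\lambda, x)|\leq C\,|\Omega_{k}^{-1/2}(x)| = C\prod_{\alpha\in R_{+}}(1+|\langle \alpha, x\rangle|)^{-\Re k_{\alpha}}$. It therefore suffices to show
\begin{align*}
  \int_{\a}\prod_{\alpha\in R_{+}}(1+|\langle \alpha, x\rangle|)^{-p\Re k_{\alpha}}\,dx<\infty
\end{align*}
whenever $p\Re k_{\alpha}>1$ for all $\alpha$. Write $\kappa_{\alpha}\coloneqq p\Re k_{\alpha}>1$. Since the integrand is $W$-invariant, I would restrict to $\weylcham$. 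I would then use the finite decomposition of $\weylcham$ into the sectors $S$ of \cref{sec:proofMainThm}, one for each enumeration of the simple roots.

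Fix a sector $S=\{\langle\alpha_{1},x\rangle\geq\dots\geq\langle\alpha_{n},x\rangle\geq 0\}$. I would change variables to $y_{j}=\langle\alpha_{j},x\rangle$. This is a linear isomorphism because $\Delta_{+}$ is a basis of $\a$, and $\a$ is spanned by $R$. The integrand only decreases if I keep just the factors coming from the simple roots, because all remaining factors are $\leq 1$. The integral over $S$ is then bounded by
\begin{align*}
  \text{const.}\int_{[0,\infty[^{n}}\prod_{j=1}^{n}(1+y_{j})^{-\kappa_{\alpha_{j}}}\,dy,
\end{align*}
which is finite since each $\kappa_{\alpha_{j}}>1$. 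This already proves the claim, and in fact it does not even need the sector decomposition: the whole chamber $\weylcham$ maps onto $[0,\infty[^{n}$.

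The main point to be careful about is that dropping the non-simple root factors is legitimate. This only uses $(1+|\langle\alpha,x\rangle|)^{-\kappa_{\alpha}}\leq 1$, which holds because $\Re k\geq 0$. Measurability and continuity of $\Exp_{k}(i\lambda,\cdot)$ are clear, so no further obstacle is expected.

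One could sharpen the hypothesis by exploiting the factors of non-simple roots as well, but that is not needed for the statement. The result holds for all $p>0$, including $p<1$, since only the pointwise bound is used.
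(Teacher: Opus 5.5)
Your proof is correct and follows essentially the same route as the paper: bound $|\Exp_{k}(i\lambda,\cdot)|^{p}$ by $|\Omega_{k}^{-p/2}|$ via \cref{thm:mainthm}, drop the non-simple root factors (each is $\leq 1$), and pass to simple-root coordinates to get a product of one-dimensional integrals. The only cosmetic difference is that the paper changes variables directly on all of $\a$ and integrates $(1+|t_{i}|)^{-p\Re k_{i}}$ over $\R$, so your preliminary restriction to $\weylcham$ via $W$-invariance is harmless but not needed.
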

\begin{proof}
  Denote by $\Delta_{+}=\{\alpha_{1},\ldots,\alpha_{n}\}$ the set of simple roots associated with $\weylcham$ and abbreviate $k_{i}\coloneqq k_{\alpha_{i}}$.
Using the transform $x=\sum_{i=1}^{n}t_{i}\lambda_{i}$ with the dual basis $\langle \alpha_{i}, \lambda_{j}\rangle = \delta_{ij}$, we calculate
  \begin{align*}
    \int_{\a}|\Omega_{k}^{-p/2}(x)| dx &\leq \int_{\a} \prod_{i=1}^{n} (1+|\langle \alpha_{i}, x\rangle|)^{-p\Re\,k_{i}}\ dx\\
                                       &= \text{const.}\cdot \prod_{i=1}^{n} \int_{\R} (1+|t_{i}|)^{-p\Re\,k_{i}}\ dt_{i} < \infty.
  \end{align*}
  The claim now follows from \cref{thm:mainthm}.
\end{proof}

Recall that for multiplicities $k\geq 0$ and $\lambda\in\a$, the Dunkl kernel has a positive representing measure (\cref{prop:repMeasure}), i.e.
\begin{align*}
  \Exp_{k}(-i\lambda, x) = \Exp_{k}(\lambda, -ix)= \int_{\a} e^{-i\langle \xi, x\rangle}\ d\mu_{\lambda}^{k}(\xi)\qquad (\forall x\in\a_{\C})
\end{align*}
with a unique compactly supported probability measure $\mu_{\lambda}^{k}$.
Thus $\Exp_{k}(-i\lambda,\cdot)$ is just the Fourier-Stieltjes transform of the measure $\mu_{\lambda}^{k}$.

\begin{theorem}\label{prop:absCont}
  Suppose $k>\frac 1 2$ and $\lambda\in\areg$. Then $\mu_{\lambda}^{k}$ is absolutely continuous with respect to the Lebesgue measure on $\a$ with density function belonging to $L^{2}(\a)$.
\end{theorem}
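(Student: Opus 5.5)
The plan is to combine the $L^{2}$-integrability of \cref{prop:lp} with Fourier--Plancherel theory for finite measures. Since $k>\frac12$ is real, \cref{prop:lp} with $p=2$ gives $\Exp_{k}(-i\lambda,\cdot)\in L^{2}(\a)$; here we use that $-\lambda\in\areg$ whenever $\lambda\in\areg$, so \cref{thm:mainthm} applies to $i(-\lambda)$. By \cref{prop:repMeasure}, the function $\Exp_{k}(-i\lambda,\cdot)$ is exactly the Fourier--Stieltjes transform $\hat\mu(x)=\int_{\a}e^{-i\langle\xi,x\rangle}\,d\mu^{k}_{\lambda}(\xi)$ of the compactly supported probability measure $\mu=\mu^{k}_{\lambda}$. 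It therefore suffices to prove the general fact that a finite Borel measure on $\a\cong\R^{n}$ whose Fourier--Stieltjes transform lies in $L^{2}$ is absolutely continuous with an $L^{2}$ density.

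For this fact, let $g\in L^{2}(\a)$ be the inverse Plancherel transform of $\hat\mu$, normalized so that $\hat g=\hat\mu$ in $L^{2}$. We then show $\mu=g\,dx$ by testing against Schwartz functions. Take $\phi\in\mathcal S(\a)$. By Fubini, since $\mu$ is finite and $\hat\phi$ is integrable, we have $\int\hat\phi\,d\mu=\int\phi\,\hat\mu\,dx$. By Parseval for $L^{2}$ functions, $\int\phi\,\hat g\,dx=\int\hat\phi\, g\,dx$. Hence $\int\psi\,d\mu=\int\psi\, g\,dx$ for all $\psi\in\mathcal S(\a)$, because $\psi=\hat\phi$ ranges over all of $\mathcal S$.

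Both $\mu$ and $g\,dx$ restricted to compact sets are Radon measures; $g$ is locally integrable since $g\in L^{2}$. Density of $C_c^{\infty}\subseteq\mathcal S$ in $C_c$ then gives $\mu=g\,dx$ as Radon measures. Consequently $\mu\ll dx$ with density $g\in L^{2}(\a)$. As a by-product, $g\ge0$ and $g$ is compactly supported, with the same support as $\mu$.

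The analytic input is entirely \cref{thm:mainthm}, through \cref{prop:lp}. The threshold $k>\frac12$ enters precisely because $\Omega_{k}^{-1}$ must be integrable, and it is needed for each simple root. I expect the only delicate point to be the distributional identification $\mu=g\,dx$, which is routine but requires keeping track of Fourier normalizations. An alternative route avoids this: view $\mu$ as a tempered distribution, observe that its Fourier transform is the $L^{2}$ function $\hat\mu$, and use that the Fourier transform is a bijection on $\mathcal S'$ preserving $L^{2}$.
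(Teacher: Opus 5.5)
Your proposal is correct and follows the paper's proof. You obtain $\Exp_{k}(\pm i\lambda,\cdot)\in L^{2}(\a)$ from \cref{prop:lp}, use Plancherel to produce an $L^{2}$ function agreeing with $\mu^{k}_{\lambda}$ as a distribution, and upgrade to equality of Radon measures via density of $C_c^{\infty}(\a)$ in $C_c(\a)$. The only difference is that you verify the distributional identity by hand (Fubini plus Parseval), where the paper cites Fourier inversion for tempered distributions.

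One harmless slip: the Fubini step needs $\phi\in L^{1}(\a)$, not integrability of $\hat\phi$, since it is $\phi(x)e^{-i\langle\xi,x\rangle}$ that must be integrable for $dx\otimes\mu^{k}_{\lambda}$.
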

\begin{proof}
  Indeed, \cref{prop:lp} implies that $\Exp_{k}(i\lambda, \cdot)$ belongs to $L^{2}(\a)$, thus its Fourier transform $f_{\lambda}^{k}$ is in $L^{2}(\a)$ by the Plancherel theorem. Hence, as distributions, $\mu_{\lambda}^{k}=f_{\lambda}^{k}$ by the inversion theorem for tempered distributions. Thus $\mu_{\lambda}^{k}(g)=\int_{\a}f_{\lambda}^{k}(x) g(x)\, dx$ for all $g\in C^{\infty}_{c}(\a)$. As $f_{\lambda}^{k}\in L^{2}(\a)\subseteq L^{1}_{loc}(\a)$, we may use the density of $C_{c}^{\infty}(\a)$ in $C_{c}(\a)$ to conclude that $\mu_{\lambda}^{k}=f_{\lambda}^{k}\ dx$ as Radon measures, see \cite[Ch. 24]{treves}.
\end{proof}

If $k>1$, then $\Exp_{k}(i\lambda, \cdot)$ belongs to $L^{1}(\a)$. Therefore, its Fourier transform $f_{\lambda}^{k}$ is in $C_{0}(\a)$ by the Riemann-Lebesgue lemma. Following the arguments in the proof of \cref{prop:absCont}, we hence even obtain a continuous density. In fact, we have the following

\begin{proposition}
  Suppose $\ell\in \N_{0}, \lambda\in\areg$ and $k>\ell+\frac{n+1}{2}$. Then the density $f_{\lambda}^{k}$ of $\mu_{\lambda}^{k}$ is of class $C_{c}^{\ell}(\a)$. 
\end{proposition}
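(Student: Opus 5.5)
The plan is to show that the function $x\mapsto x^{\beta}\Exp_{k}(i\lambda, x)$ belongs to $L^{1}(\a)$ for every multi-index $\beta$ with $|\beta|\leq \ell$. Fourier inversion and differentiation under the integral sign then give $f^{k}_{\lambda}\in C^{\ell}$. Compact support of the density is automatic, since $\mu^{k}_{\lambda}$ is compactly supported and $f^{k}_{\lambda}$ agrees with it as a distribution. So the only issue is the regularity.

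Concretely, if $x^{\beta}\Exp_{k}(i\lambda,\cdot)\in L^{1}(\a)$ for all $|\beta|\leq\ell$, we set
\begin{align*}
  f(\xi)\coloneqq (2\pi)^{-n}\int_{\a} e^{i\langle \xi, x\rangle}\Exp_{k}(-i\lambda, x)\, dx .
\end{align*}
Here we use $\Exp_{k}(-i\lambda,x)=\Exp_{k}(i\lambda,-x)$ and the $W$-invariance of $\Omega_{k}$, so $|\Exp_{k}(-i\lambda,\cdot)|$ obeys the same bound. Then $f$ is continuous, and by dominated convergence its partial derivatives up to order $\ell$ exist and are continuous, given by integrals of $(ix)^{\beta}\Exp_{k}(-i\lambda,x)e^{i\langle\xi,x\rangle}$. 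Exactly as in the proof of \cref{prop:absCont}, $\mu^{k}_{\lambda}$ and $f\,d\xi$ have the same Fourier transform as tempered distributions, hence coincide as Radon measures. Thus $f^{k}_{\lambda}=f\in C^{\ell}$, and it has compact support $\operatorname{supp}\mu^{k}_{\lambda}$.

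The key step is therefore the integrability estimate. By \cref{thm:mainthm} we have $|x^{\beta}\Exp_{k}(i\lambda,x)|\leq C|x|^{\ell}\Omega_{k}^{-1/2}(x)$. We use the substitution from \cref{prop:lp}, $x=\sum_{j} t_{j}\lambda_{j}$ with the dual basis to the simple roots, so that $|x|\leq \text{const.}\cdot\sum_{j}|t_{j}|\leq\text{const.}\cdot\prod_{j}(1+|t_{j}|)$. Keeping only the simple-root factors of $\Omega_{k}$, the integral is dominated by
\begin{align*}
  \text{const.}\cdot\prod_{j=1}^{n}\int_{\R}(1+|t_{j}|)^{\ell-k_{j}}\,dt_{j},
\end{align*}
which is finite provided $k_{j}>\ell+1$ for all $j$.

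The main obstacle is matching the threshold: this crude argument requires $k>\ell+1$, whereas the stated hypothesis is $k>\ell+\frac{n+1}{2}$. For $n\geq 2$ the stated hypothesis is stronger than $k>\ell+1$, so the crude bound already suffices. For $n=1$ the two thresholds coincide. I would nonetheless double-check whether the intended argument instead works in polar-type coordinates, or uses all positive roots: the hypothesis $k>\ell+\frac{n+1}{2}$ looks like it comes from a Sobolev-type argument, with $(1+|x|)^{\ell}\Exp_{k}(i\lambda,\cdot)\in L^{2}$ together with Cauchy--Schwarz against $(1+|x|)^{-(n+1)/2}$-type weights. Either way, the simple-root product estimate above gives the required $L^{1}$ bound under the stated hypothesis, which is all the proof needs.
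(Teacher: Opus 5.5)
Your proof is correct, but it takes a different route from the paper.

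\textbf{The paper's route.} The paper works on the $L^2$ side. It uses the same simple-root product substitution, but applied to $\int_{\a}(1+|x|^2)^s\,\Omega_k^{-1}(x)\,dx$. This shows $\mu_\lambda^k\in H^s(\a)$ for every $s<k_i-\frac12$. It then invokes the Sobolev embedding $H^s(\a)\subseteq C^\ell(\a)$ for $s>\ell+\frac n2$. The hypothesis $k>\ell+\frac{n+1}{2}$ is exactly what makes the window $\ell+\frac n2<s<k_i-\frac12$ nonempty.

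\textbf{Your route.} You instead show that $x^\beta\Exp_k(-i\lambda,\cdot)\in L^1(\a)$ for $|\beta|\le\ell$, and differentiate the inverse Fourier integral under the integral sign. This is more elementary, and it is also sharper. It only needs $k_j>\ell+1$ for the simple roots, which for $n\ge 2$ is strictly weaker than the stated hypothesis. Your check that the stated hypothesis implies this is right.

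\textbf{Where the paper loses.} The paper's extra loss of $\frac{n-1}{2}$ comes from a mismatch. It bounds the weight $(1+|x|)^{2s}$ by the product $\prod_i(1+|\langle\alpha_i,x\rangle|)^{2s}$, i.e.\ it exploits the product-type decay of $\Omega_k^{-1/2}$, but then pairs this with the isotropic embedding, which demands $s>\ell+\frac n2$ in total. Your $L^1$ argument pays exactly $1$ per coordinate and nothing else.

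\textbf{How the two relate.} For $\ell=0$, your argument is just the paper's own remark preceding the proposition: $k>1$ gives $\Exp_k(i\lambda,\cdot)\in L^1$, hence a continuous density by Riemann--Lebesgue. Your proof extends this to moments. What the paper's approach buys in return is the intermediate statement $\mu_\lambda^k\in H^s(\a)$ for all $s<\min_i k_i-\frac12$. This is fractional $L^2$-Sobolev regularity, and it remains meaningful in the range $\frac12<k\le 1$, where your $L^1$ argument says nothing.
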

\begin{proof}
  Again, denote by $\Delta_{+}=\{\alpha_{1},\ldots, \alpha_{n}\}$ the set of simple roots associated with $\weylcham$ and abbreviate $k_{i}\coloneqq k_{\alpha_{i}}$. Since all norms on $\a$ are equivalent, there exists $c\geq 0$ such that
  \begin{align*}
    1+|x| \ \leq\ 1+c\sum_{i=1}^{n}|\langle \alpha_{i}, x\rangle|\ \leq\ (1+c)\Big(1+\sum_{i=1}^{n}|\langle \alpha_{i}, x\rangle |\Big)\ \leq\ (1+c)\prod_{i=1}^{n}(1+|\langle \alpha_{i}, x\rangle|)
  \end{align*}
  for all $x\in\a$. We hence obtain
  \begin{align*}
    \int_{\a} (1+|x|^{2})^{s}\,\Omega_{k}^{-1}(x)\ dx &\leq \int_{\a} \frac{(1+|x|)^{2s}}{\prod_{i=1}^{n}(1+|\langle \alpha_{i}, x\rangle|)^{2k_{i}}}\ dx\\
                                                      &\leq \text{const.}\cdot\int_{\a} \prod_{i=1}^{n} \frac{(1+|\langle \alpha_{i}, x\rangle|)^{2s}}{(1+|\langle \alpha_{i}, x\rangle|)^{2k_{i}}}\ dx\\
                                                      &\leq \text{const.}\cdot \prod_{i=1}^{n} \int_{\R} (1+|t_{i}|)^{2(s - k_{i})}\ dt_{i}.
  \end{align*}
  The last integral is finite iff $2(s-k_{i})<-1$ for all $i=1,\ldots, n$. Thus, if $s< k_{i}-\frac 1 2$ for all $i=1, \ldots, n$, then $\mu_{\lambda}^{k}$ belongs to the Sobolev space 
  \begin{align*}
    H^{s}(\a) \coloneqq \left\{f\in \S'(\a): \widehat f\in L^{2}_{loc}(\a), \int_{\a} (1+|x|^{2})^{s}|\widehat f(x)|^{2}\ dx < \infty\right\}. 
  \end{align*}
  By the Sobolev embedding theorem (\cite[Ch. 31]{treves}), $H^{s}(\a)\subseteq C^{\ell}(\a)$ for $s>\ell+\frac{n}{2}$. Thus, if there exists $s>0$ such that
  \begin{align*}
    \ell+\frac{n}{2} < s < k_{i}-\frac 1 2, 
  \end{align*}
  we can conclude $\mu_{\lambda}^{k}=f_{\lambda}^{k}\in C^{\ell}(\a)$ as a tempered distribution and thus as a measure by the same argument as in \cref{prop:absCont}. But this is true by our assumption $k>\ell+\frac{n+1}{2}$.
\end{proof}

The calculations in the previous proofs depended on $\Re\,k$ rather than on $k$. Thus we in particular obtain

\begin{corollary}
  Suppose $\ell\in \N_{0}, \lambda\in\areg$ and $\Re\,k>\ell+\frac{n+1}{2}$. Then the Fourier transform $\widehat\Exp_{k}(i\lambda, \cdot)$ is of class $C_{c}^{\ell}(\a)$. 
\end{corollary}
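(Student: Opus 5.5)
The corollary follows from the argument of the preceding proposition, with $k$ replaced by $\Re\,k$ and the measure $\mu^{k}_{\lambda}$ replaced by the tempered distribution $\widehat\Exp_{k}(i\lambda,\cdot)$. For complex $k$ there is no positive representing measure, so the claim concerns only the Fourier transform of the bounded smooth function $x\mapsto\Exp_{k}(i\lambda,x)$. This function is tempered by \cref{prop:dunklKernel}(4), so $\widehat\Exp_{k}(i\lambda,\cdot)\in\S'(\a)$ is well defined.

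\textbf{Step 1: Sobolev regularity.} By \cref{thm:mainthm}, $|\Exp_{k}(i\lambda,x)|^{2}\le C\,|\Omega_{k}^{-1}(x)|$, and
\begin{align*}
|\Omega_{k}^{-1}(x)|=\prod_{\alpha\in R_{+}}(1+|\langle\alpha,x\rangle|)^{-2\Re\,k_{\alpha}}\le\prod_{i=1}^{n}(1+|\langle\alpha_{i},x\rangle|)^{-2\Re\,k_{i}},
\end{align*}
since every factor is at most $1$ when $\Re\,k\ge0$. The computation in the preceding proof then applies verbatim with $k_{i}$ replaced by $\Re\,k_{i}$. Using $1+|x|\le(1+c)\prod_{i}(1+|\langle\alpha_{i},x\rangle|)$ and the change of variables to the dual basis, we get
\begin{align*}
\int_{\a}(1+|x|^{2})^{s}|\Exp_{k}(i\lambda,x)|^{2}\,dx<\infty \quad\text{whenever } s<\Re\,k_{i}-\tfrac12 \text{ for all } i.
\end{align*}
Hence $\widehat\Exp_{k}(i\lambda,\cdot)\in H^{s}(\a)$ for every such $s$. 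Here $\widehat{\widehat f}(x)=f(-x)$ up to normalisation, so the weight may be placed on either side.

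\textbf{Step 2: Sobolev embedding.} The assumption $\Re\,k>\ell+\frac{n+1}{2}$ gives an $s$ with $\ell+\frac n2<s<\Re\,k_{i}-\frac12$ for all $i$. The Sobolev embedding then yields $\widehat\Exp_{k}(i\lambda,\cdot)\in C^{\ell}(\a)$ as a distribution, i.e. the distribution is given by a $C^{\ell}$ function.

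\textbf{Step 3: compact support.} This is the step I expect to be the main obstacle, because for complex $k$ the support of $\mu^{k}_{\lambda}$ is not available. I would instead use the Paley--Wiener--Schwartz theorem. The map $z\mapsto\Exp_{k}(i\lambda,z)$ is entire on $\a_{\C}$, and by \cref{prop:dunklKernel}(4) it satisfies
\begin{align*}
|\Exp_{k}(i\lambda,z)|\le C\,e^{\max_{w}\Re\langle wi\lambda,z\rangle}\le C\,e^{|\lambda|\,|\Im z|}.
\end{align*}
This is exponential type with polynomial (here constant) growth on $\a$, so its Fourier transform is supported in the closed ball of radius $|\lambda|$; in fact it is supported in $\conv(W\lambda)$. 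Combined with Step 2, this shows that $\widehat\Exp_{k}(i\lambda,\cdot)\in C^{\ell}_{c}(\a)$.
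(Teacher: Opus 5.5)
Your proposal is correct and follows the paper's proof. The paper likewise gets compact support from the classical Paley--Wiener(--Schwartz) theorem, and then reruns the Sobolev-space argument of the preceding proposition with $k$ replaced by $\Re\,k$, using \cref{thm:mainthm}; you have simply written out the details (reduction to simple roots, choice of $\ell+\frac n2<s<\Re\,k_i-\frac12$, and the exponential-type bound from \cref{prop:dunklKernel}(4)) that the paper leaves implicit.
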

\begin{proof}
  The (classical) Paley-Wiener theorem (\cite[Ch. 29]{treves}) shows that the Fourier transform $\widehat\Exp_{k}(-i\lambda, \cdot)$ is a compactly supported distribution. Now we use the same arguments as in the proof of the previous proposition.
\end{proof}

In fact, the support of $\widehat\Exp_{k}(i\lambda, \cdot)$ is contained in the convex hull $\conv (W.\lambda)$ of the $W$-orbit of $\lambda$, as was already noted in \cite[Cor. 3.3]{dejeu}.

\section{Further remarks}\label{sec:remarks}
We conclude with some final remarks on the geometric situations. To make notation less cumbersome, we shall write $\partial_{p}$ and $T_{p}$ instead of $p(\partial)$ and $p(T)$ for $p\in\C[\a]$.

\subsection{Cartan motion groups}
We restate a few facts which are important for our purposes and refer the reader to \cite[Section 6]{deJeuPaleyWiener} for a detailed discussion. We take a connected non-compact semisimple Lie group $G$ with finite center, maximal compact subgroup $K\subseteq G$, and corresponding Cartan decomposition $\g=\Lie(G)=\k\oplus\p$, where $\k = \Lie(K)$. The Cartan motion group $G_{0}=K\ltimes \p$ acts on the flat symmetric space $G_{0}/K \cong \p$ by Killing form isometries. Take a maximal abelian subspace $\a\subseteq\p$ with restricted root system $\Sigma$, Weyl group $W$, and multiplicities $m_{\alpha}, \alpha\in\Sigma$. Rescaling the root multiplicities according to $k_{\alpha}\coloneqq \frac 1 4 \sum_{\beta\in\R\alpha\cap \Sigma}m_{\beta}$ and choosing a normalized root system $R$ with the same Weyl group as $\Sigma$, the spherical functions $\psi_{\lambda}\ (\lambda\in\a_{\C}/W)$  of $G_{0}/K$ are precisely the Bessel functions, i.e.
\begin{align*}
  \psi_{\lambda}|_{\a} = J_{k}(\lambda, \cdot).
\end{align*}
This is due to the fact that the $W$-invariant Dunkl operators associated with $(R, k)$ appear precisely as the radial parts of $K$-invariant differential operators on $\p$, whose joint eigenfunctions are the spherical functions. 
We hence reobtain the results of \cite{clercBessel} as a consequence of our results by averaging the Dunkl kernel.

\subsection{A Bessel variant of (\ref{eqn:matrixSystemPhi})}
Instead of starting with the Dunkl kernel system \cref{eqn:jointEVProb}, a more classically inspired approach would be to investigate the Bessel system \cref{eqn:jointEVProbBessel}. Thus, consider $H\in\weylcham$ and the corresponding parabolic subgroup $W_{I}\coloneqq \{w\in W : wH=H\}\subseteq W$.
By Chevalley's theorem (\cite{chevalley, hcSphericalI}) we have $\C[\a]^{W_{I}}=\bigoplus_{i=1}^{r} p_{i}\cdot \C[\a]^{W}$ for certain homogeneous $p_{1}=1, \ldots, p_{r}\in \C[\a]^{W_{I}}$, $r=[W:W_{I}]$. Thus, we can find $q_{ij}\in \C[\a]^{W}$ such that
  \begin{align*}
    T_{H}T_{p_{i}} = \sum_{j=1}^{r} T_{p_{j}}T_{q_{ij}}. 
  \end{align*}
  Indeed, note that $H\in \C[\a]^{W_{I}}$ under the usual identification $\a\leftrightarrow \a^{\ast}$.
  The Bessel function $J_{k}(\lambda, \cdot)$ is $W$-invariant and satisfies $$T_{q}J_{k}(\lambda, \cdot)=q(\lambda)J_{k}(\lambda, \cdot)$$ for all $q\in \C[\a]^{W}$.  We hence obtain
  \begin{align*}
    T_{H} \wt\Phi = \Gamma \wt\Phi \quad\text{with}\quad \wt\Phi=\matr{T_{p_{1}}J_{k}(i\lambda, \cdot)\\ \vdots\\ T_{p_{r}}J_{k}(i\lambda, \cdot)}\quad\text{and}\quad\Gamma = \matr{q_{11}(i\lambda) & \ldots & q_{1r}(i\lambda)\\ \vdots & \ddots &\vdots \\ q_{r1}(i\lambda) & \cdots & q_{rr}(i\lambda)}.
  \end{align*}

  Introducing $\Phi(x)\coloneqq \omega_{I}^{1/2}(x)\wt\Phi(x)$, we would obtain
  \begin{align}\label{eqn:besselMatrixSystem}
    \partial_{H}\Phi(x) = \Gamma\Phi(x) + \sum_{\alpha\in R_{+}\setminus R_{I}}\frac{k_{\alpha}\langle \alpha, H\rangle }{\langle \alpha, x\rangle} \omega_{I}^{1/2}(x) \Phi(s_{\alpha}x).
  \end{align}

  This approach would be closer to the spirit of Harish-Chandra's discussion of spherical functions on symmetric spaces of non-compact type (\cite{hcSpherical}; see later), and thus in the spirit of Clerc, who adapted these ideas to conquer the Euclidean case (\cite{clercBessel}).

  Again, the decisive next step in the proof would be to modify this system so that it becomes a perturbation of $\partial_{H}Y=\Gamma Y$ with an absolutely integrable perturbation. 
  In the geometric cases, one basically deals with radial parts of differential operators on certain subalgebras $\p\supseteq\p_{0}\supseteq\a$. Loosely speaking, conjugating them by the weight function $\omega_{I}^{1/2}$ makes them self-adjoint and transforms the system into the desired form, as this eliminates certain coefficients in the operators of poor integrability.  This is the approach of \cite{clercBessel}. 
  However, this approach heavily uses the geometry of the ambient space $\p$ and the explicit realization of the operators as differential operators. Thus, this method fails to generalize to the Dunkl setting. In fact, one would need precise knowledge of how higher-order invariant Dunkl operators degenerate when restricted to a set $\anreg = \{x\in\a : \langle \alpha, x\rangle \neq 0\ (\forall \alpha\in R\setminus R_{I})\}\subseteq\a$ to make such an approach feasible. 
 
  Furthermore, the realization as differential operators in the geometric cases makes it easier to establish first-order systems for the derivatives of spherical functions, as one can simply use the Leibniz rule without paying attention to reflection terms. Thus, incorporating an induction on the degree $\deg(p)$ of a polynomial $p\in\C[\p]$ when considering $\partial_{p}\psi_{\lambda}$ is easier in this situation.

  Obtaining estimates on derivatives of Bessel functions would in turn lead to estimates for the Dunkl kernel, as $\Exp_{k}(\lambda, \cdot) = \partial_{q} J_{k}(\lambda, \cdot)$ for some $q\in \C[\a]$, see \cite{opdamBessel}.
 
  While the system (\ref{eqn:besselMatrixSystem}) remains rather elusive, the Dunkl system (\ref{eqn:matrixSystemPhi}) is very explicit and thus enables us to obtain integrable perturbations by completely different methods (\cite{eastham, bodineLutz}). The spectral properties of $\Gamma$ in (\ref{eqn:besselMatrixSystem}) were already studied by Harish-Chandra (\cite{hcSpherical}). Note that the $T_{p_{j}}J_{k}(i\lambda,\cdot)$ are $W_{I}$-invariant linear combinations of $\Exp_{k}(iw\lambda, \cdot), w\in W$ by the eigenvalue property of the Dunkl kernel. Thus by studying (\ref{eqn:matrixSystemPhi}) we effectively diagonalize the system (\ref{eqn:besselMatrixSystem}).
  
  Thus, the situation can be summarized as follows: In the general Dunkl setting, we lack important tools used by Clerc to make his differential system sufficiently nice to study the asymptotic properties of its solutions. However, the first-order Dunkl operators are much more explicit than the radial parts of (higher-order) invariant differential operators. In particular, the explicit degeneration of $T_{H}$ when $H\in\weylcham$ becomes singular, together with the eigenvalue property of the Dunkl kernel, allows us to obtain first-order systems where all entries are precisely known. This allows us to employ classical tools of Eastham \cite{eastham, bodineLutz} to study the asymptotics of its solutions, which would not have worked with Clerc's system. Thus, our approach also clarifies the geometric situation.

\subsection{Harish-Chandra's theory and Heckman-Opdam hypergeometric functions}
The idea of studying spherical functions using radial parts and first-order systems dates back to Harish-Chandra (\cite{hcDiffOps, hcSphericalI, hcSpherical}), where he used this to investigate spherical functions of Riemannian symmetric spaces of the non-compact type $G/K$. 
The striking observation was that the radial part of the Casimir operator, and thus the radial part of the Laplace-Beltrami operator, plays a predominant role. The corresponding differential equation has regular singularities, and thus classical Frobenius approaches for Fuchsian differential equations generalize to this situation (\cite{hcSpherical,trombiVaradarajan, casselmanMilicic, gangolliVaradarajan}). This eventually leads to a series expansion for spherical functions, known as the Harish-Chandra series.

Similar to how Dunkl theory generalizes the theory of spherical functions on Cartan motion groups, Heckman-Opdam theory generalizes the theory of spherical functions on Riemannian symmetric spaces of the non-compact type. Stripping $A$ in the Cartan decomposition $G=KAK$ of its ambient geometry (i.e. forgetting the $G$ and the $K$), we are left with a triple $(\Lie(A),\Sigma, m)$ of a Euclidean space endowed with a root system $\Sigma$ and multiplicities $m_{\alpha}, \alpha\in\Sigma$. Allowing the multiplicities $m_{\alpha}$ to be arbitrary complex parameters, one can still investigate the analogue of the algebra of radial parts. 
Studying this algebra, and in particular the analogue of the radial part of the Laplace-Beltrami operator, leads to a fruitful generalization of the theory of spherical functions. The joint eigenfunctions of this algebra are given by the hypergeometric function (\cite{heckmanOpdamI, heckmanII}). Later, this theory was further polished by the discovery of a curved analogue of the Dunkl operators, the Cherednik operators (\cite{cherednik}), which naturally give rise to this algebra of radial parts. Bounds for the hypergeometric function were studied in \cite{schapira}. Asymptotics of the hypergeometric function using its Harish-Chandra series expansion were obtained in \cite{narayananPasqualePusti}.

In the rank one case, the hypergeometric system of Heckman and Opdam reduces to the classical hypergeometric system and the Bessel system (\ref{eqn:jointEVProbBessel}) reduces to the classical Bessel system, see \cite{heckmanOpdamI} and \cite{dunkl1991, dunkl5}, respectively. It is already in this case that the Bessel system has an irregular singularity at $\infty$. Consequently, one cannot expect the usual machinery of Fuchsian differential equations to work and yield asymptotic expansions. Thus, already \cite{barletClerc, clercBessel} had to defer to different methods. 

\subsection{Absolute continuity in the geometric cases}

Absolute continuity, at least for the representing measure $\mu_{W, \lambda}^{k}=\frac{1}{|W|}\sum_{w\in W}\mu_{w\lambda}^{k}$ of the Bessel function, is immediate in the geometric cases due to the existence of an integral formula expressing $\psi_{\lambda}$ as an orbital integral, see \cite[p. 424, pp. 478f.]{helgasonGroups}. 
In the complex case $k=1$ (i.e. $G=K_{\C}$ is a complex Lie group), this measure is known as the Duistermaat-Heckman measure, and the density is known to be piecewise polynomial (\cite{heckmanProjectionsOfOrbits, duistermaatHeckman1982variation}).
An explicit formula for the density function can be obtained from \cite{graczykSawyerComplexProduct} and a classical relation between spherical functions of $G/K$ and $G_{0}/K$, see \cite{helgasonGroups}, Ch. IV, Prop. 4.10 and Ch. II, Thm. 3.15. Alternatively, one can employ the machinery of Hamiltonian actions, see \cite{guilleminLermanSternberg}.
The points of non-smoothness of the density lie on certain hyperplanes depending on $\lambda$.

\section{Appendix}\label{sec:appendix}
We now give the proofs of some technical lemmas we postponed earlier.

\subsection{Proof of \cref{lem:integrabilityB}}
      We first deal with the case $p=\xi$, so $\deg(p)=1$. 
      Recall $B=D_{1}+D_{2}+B_{2}-B_{3}-B_{4}$ from (\ref{eqn:systemPsi}) and the definition of $a=a(x_{0})\geq 1$ from \cref{eqn:ax0}.
      All terms in $B(s)$, except those of $D_{2}(s)$, are bounded by $C(a+s)^{-2}|\Omega_{I}^{-1/2}(x_{0})|$ and
      \begin{align*}
        \int_{0}^{\infty} (a+s)^{-2}\ ds \leq \int_{0}^{\infty} (1+s)^{-2}\ ds = \text{const.} < \infty. 
      \end{align*}
      It thus suffices to study the oscillating terms of $D_{2}(s)$. The entries of $D_{2}(s)$ are linear combinations of terms
      \begin{align*}
        f(s)=h(s)\Phi(s) 
      \end{align*}
      with 
      \begin{align*}
        h(s) = \frac{e^{-i\langle \alpha, x_{0}+sH_{i}\rangle\langle \alpha, g\lambda\rangle}}{\langle \alpha, x_{0}+sH_{i}\rangle}. 
      \end{align*}
      Since $\|h(s)\|\leq \text{const.}\cdot (a+s)^{-1}$ and $T$ is treated as a constant, we immediately obtain for $t\leq T$ that
      \begin{align*}
        \left\|\int_{0}^{t} D_{2}(s)\ ds\right\| \leq \text{const.}\cdot |\Omega_{I}^{-1/2}(x_{0})|
      \end{align*}
      by the estimate on $\Phi=\omega_{I}^{1/2}\wt\Phi$ of our induction hypothesis (\cref{thm:mainthm}). This finishes the proof of (1).
      With the same reasoning as in \cref{prop:AQentries}, we see that
      \begin{align*}
        H(t) \coloneqq -\int_{t}^{\infty} h(s)\ ds
      \end{align*}
      converges and that $|H(t)|\leq \text{const.}\cdot(a+t)^{-1}$. We now denote
      \begin{align*}
        M(t) \coloneqq (\mathbbm{1}+Q(t))^{-1} 
      \end{align*}
      and see $M'(t) = -M(t)A(t)M(t)$, as $Q'=A$. Thus integration by parts gives
      \begin{align}\label{eqn:integrabilityBintByParts}
        &\int_{T}^{t} M(s)h(s)\Phi(s)\ ds\nonumber \\
        =& \big[M(s)\Phi(s)H(s)\big]_{s=T}^{t} - \int_{T}^{t} (M(s)\Phi(s))' H(s)\ ds\nonumber\\
        =& \big[M(s)\Phi(s)H(s)\big]_{s=T}^{t} - \int_{T}^{t} \Big(-M(s)A(s)M(s)\Phi(s)+ M(s)\Phi'(s)\Big) H(s)\ ds\\
        =& \big[M(s)\Phi(s)H(s)\big]_{s=T}^{t} - \int_{T}^{t} \Big(-M(s)A(s)M(s)\Phi(s)+ M(s)A(s)\Phi(s)\Big) H(s)\ ds.\nonumber
      \end{align}
      But $\|A(s)\|\leq \text{const.}\cdot (a+s)^{-1}$ and $|H(s)|\leq \text{const.}\cdot (a+s)^{-1}$ and the $M(s)$ are uniformly bounded by \cref{lem:QBound}. Thus, the integrand is bounded by $\text{const.}\cdot (a+s)^{-2}|\Omega_{I}^{-1/2}(x_{0})|$.
      We conclude
      \begin{align*}
        \left\|\int_{T}^{t} M(s)h(s)\Phi(s)\ ds\right\| \leq \text{const.}\cdot |\Omega_{I}^{-1/2}(x_{0})|,
      \end{align*}
      which finishes the proof of (2).

      This proves the lemma in the case $\deg(p)=1$. 
      We now turn to the general case of $\deg(p)\geq 1$.
      For $p=\xi_{1}\ldots\xi_{\ell}$, using the Leibniz rule and $\partial_{H}(\omega_{I}^{1/2}\wt\Phi)=A\omega_{I}^{1/2}\wt\Phi$, we see
      \begin{align*}
        \partial_{H}\Psi = A\Psi + \sum_{q_{1}, q_{2}, q_{3}} \big(q_{1}(\partial)A\big) \big(q_{2}(\partial)\omega_{I}^{1/2}\big) \big(q_{3}(\partial)\wt\Phi\big) = A\Psi + B
      \end{align*}
      with certain $q_{1},q_{2},q_{3}$ with $\deg(q_{1})+\deg(q_{2})+\deg(q_{3})=\deg(p)$ and $\Psi = \omega_{I}^{1/2} p(\partial)\wt\Phi$. We can study the different terms of $B$ in a similar fashion as we did after (\ref{eqn:systemPsi}). 
      Note that whenever $\deg(q_{3})=\deg(p)$, there is a $\partial_{H}$ appearing in $q_{3}(\partial)$, which enables one to fall back to the Dunkl eigenvalue property, as was done for the term $B_{4}(x)$ in (\ref{eqn:systemPsi}). Thus for expressions $\omega_{I}^{1/2}q_{3}(\partial)\wt\Phi$ one can invoke estimates from the induction hypothesis.

Indeed, most of the terms in $B$ will be absolutely integrable, like we already saw in the discussion after (\ref{eqn:systemPsi}). The discussion of those terms is easy. Interesting are the oscillating terms, similar to $D_{2}$ above. Given $q_{1}, q_{3}\in\C[\a]$ with $\deg(q_{i})<\deg(p)$, the expression $\big(q_{1}(\partial)A\big)\big(\omega_{I}^{1/2}q_{3}(\partial) \wt\Phi\big)$ contains linear combinations of terms of the form
  \begin{align*}
    k_{\alpha} \frac{e^{-i\langle \alpha, x\rangle \langle \alpha, g\lambda\rangle}}{\langle \alpha, x\rangle} \big(\omega_{I}^{1/2}q_{3}(\partial)\wt\Phi\big)
  \end{align*}
  in its entries. While we used $\Phi'=A\Phi$ for $\Phi=\omega_{I}^{1/2}\wt\Phi$ in the proof of (2) above, more precisely in \cref{eqn:integrabilityBintByParts}, the proof still works in this situation: By the induction hypothesis for $\deg(q_{3})<\deg(p)$, $\omega_{I}^{1/2}q_{3}(\partial)\wt\Phi$ also satisfies a system of the form (\ref{eqn:systemDeriv}). As $|A(t)H(t)|\leq \text{const.}\cdot (a+t)^{-2}$ and $|B(t)H(t)|\leq \text{const.}\cdot(a+t)^{-2}|\Omega_{I}^{-1/2}(x_{0})|$, the proof goes through.
  In the case $\deg(q_{3}) = 0$ discussed previously, the additive perturbation in (\ref{eqn:systemDeriv}) was just $0$.

  \qed

  \subsection{Proof of \cref{lem:constructionQcomplex}}
    To construct $Q$, we argue as in the proof of \cite[Thm. 4.26]{bodineLutz}. We reiterate certain parts of the proof for the reader's convenience.
    Note that the matrix $\Gamma$ consists only of real entries, which we denote $\Gamma_{w}=\langle H, w\lambda_{1}\rangle$. Thus $\Gamma=\diag(\Gamma_{w})_{w\in W}$.
    Define $\epsilon\geq 0$ as
    \begin{align*}
      \epsilon \coloneqq \min\{|\Gamma_{g}-\Gamma_{h}| : g,h\in W \text{ s.t. } \Gamma_{g}\neq \Gamma_{h}\},
    \end{align*}
    with the convention $\epsilon=0$ if $\Gamma_{g}=\Gamma_{h}$ for all $g, h\in W$. 
    \begin{sublemma}\label{sublemma:rho}
      If $\epsilon>0$, then there exist constants $0\leq \rho< \epsilon$ and $t_{0}\geq 0$ such that $e^{\rho t}(a+t)^{-1}$ is non-decreasing for all $t\geq t_{0}$. The constant $t_{0}$ does not depend on $a\geq 1$.
    \end{sublemma}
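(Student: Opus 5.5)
The plan is to choose $\rho$ explicitly and check monotonicity by a derivative computation. Since $\epsilon>0$, fix any $\rho$ with $0<\rho<\epsilon$; for definiteness take $\rho\coloneqq\epsilon/2$. (The sublemma allows $\rho=0$, but then $(a+t)^{-1}$ is strictly decreasing, so a positive $\rho$ is needed.) We must show that $g_{a}(t)\coloneqq e^{\rho t}(a+t)^{-1}$ is non-decreasing on $[t_{0},\infty[$ for some $t_{0}$ that is independent of $a\geq 1$.

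Differentiate:
\begin{align*}
  g_{a}'(t)=e^{\rho t}(a+t)^{-2}\big(\rho(a+t)-1\big).
\end{align*}
So $g_{a}'(t)\geq 0$ exactly when $a+t\geq 1/\rho$. Because $a\geq 1$, it is enough to have $t\geq 1/\rho$, since then $a+t\geq t\geq 1/\rho$. We therefore set $t_{0}\coloneqq 1/\rho=2/\epsilon$. This value depends only on $\epsilon$, and $\epsilon$ depends only on $\Gamma$, that is on $H_{i}$, $\lambda_{1}$ and $W$, not on $a$. For $t\geq t_{0}$ we have $g_{a}'(t)\geq 0$, so $g_{a}$ is non-decreasing there.

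The only point that needs care is the uniformity in $a$. It works because larger $a$ only makes $\rho(a+t)-1$ larger, so the worst case is $a=1$, or even $a=0$ if one bounds crudely. There is no real obstacle beyond this. The point of choosing $\rho$ strictly below $\epsilon$ is that later, in the construction of $Q$ following \cite[Thm. 4.26]{bodineLutz}, the factors $e^{-(\Gamma_{g}-\Gamma_{h})(t-s)}$ with $|\Gamma_{g}-\Gamma_{h}|\geq\epsilon$ must dominate the growth $e^{\rho s}$, which leaves a spare exponential decay $e^{-(\epsilon-\rho)(t-s)}$.
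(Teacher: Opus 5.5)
Your proof is correct and is essentially the paper's argument: the same derivative computation reduces monotonicity to $\rho(a+t)\geq 1$, and with $\rho=\epsilon/2$ uniformity in $a$ follows from a crude lower bound on $a+t$. The only difference is cosmetic: the paper uses $a+t\geq 1+t$ and takes $t_{0}$ with $(1+t_{0})^{-1}<\epsilon/2$, whereas you use $a+t\geq t$ and take $t_{0}=2/\epsilon$.
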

    \begin{proof}[Proof of Sublemma]
       We have 
       \begin{align*}
         \frac{d}{dt} e^{\rho t}(a+t)^{-1} = e^{\rho t}(a+t)^{-2} (a\rho+\rho t -1)\geq 0\quad \Longleftrightarrow\quad a\rho+\rho t \geq 1. 
       \end{align*}
       Since $a\geq 1$, this is certainly true for $\rho > (1+t)^{-1}$. In particular, there exists a $t_{0}\geq 0$ such that $(1+t)^{-1}\leq (1+t_{0})^{-1}<\epsilon/2$ for all $t\geq t_{0}$.
    \end{proof}

    We fix $t_{0}$ and $\rho$ as above if $\epsilon>0$ and continue with the proof of \cref{lem:constructionQcomplex}. 
    We define 
    \begin{align*}
      P(t) \coloneqq -\int_{t}^{\infty} A(s)\ ds, 
    \end{align*}
    which converges by \cref{prop:AQentries} and satisfies $P'=A$ (this was our previous $Q$). In fact, $P\in\mathcal O((a+t)^{-1})$ by \cref{lem:estNonParabRootsBetter}.
  The condition $Q'=[\Gamma, Q]+A$ translates to
  \begin{align}\label{eqn:odeQ}
    Q_{g, h}' = (\Gamma_{g}-\Gamma_{h})Q_{g, h} + A_{g, h}.
  \end{align}
  We consider several cases:
  \begin{enumerate}
    \item If $\Gamma_{g}-\Gamma_{h}>0$, we define
      \begin{align}\label{eqn:defQgeq0}
        Q_{g, h}(t) \coloneqq -\int_{t}^{\infty} e^{(\Gamma_{g}-\Gamma_{h})(t-s)}A_{g, h}(s)\ ds. 
      \end{align}
      Then $Q_{g, h}$ is a solution to (\ref{eqn:odeQ}) as one immediately verifies. 
      Integration by parts shows that
      \begin{align*}
        Q_{g, h}(t)=P_{g, h}(t)-(\Gamma_{g}-\Gamma_{h})\int_{t}^{\infty} e^{(\Gamma_{g}-\Gamma_{h})(t-s)}P_{g, h}(s)\ ds. 
      \end{align*}
      Now $(\Gamma_{g}-\Gamma_{h})(t-s)<\epsilon(t-s)$ for all $s\geq t$ and we already saw $P_{g, h}(s)\in \mathcal O((a+s)^{-1})$. Since furthermore $(a+s)^{-1}$ is monotonically decreasing in $s$, we conclude that
      \begin{align*}
        |Q_{g, h}(t)| &\leq |P_{g, h}(t)| + \text{const.}\cdot \int_{t}^{\infty} e^{\epsilon (t-s)}(a+s)^{-1}\ ds\\
                      &\leq \text{const.}\cdot (a+t)^{-1}\left(1+\int_{t}^{\infty}e^{\epsilon (t-s)}\ ds\right).
      \end{align*}
      Thus $Q_{g, h}\in \mathcal O((a+t)^{-1})$.

    \item  If $\Gamma_{g}-\Gamma_{h}=0$, we define $Q_{g, h}$ as in \cref{eqn:defQgeq0}. Now $Q_{g, h}(t)\in\mathcal O((a+t)^{-1})$ is immediate from $P_{g, h}(t)\in\mathcal O((a+t)^{-1})$.

    \item If $\Gamma_{g}-\Gamma_{h}<0$, we instead define
      \begin{align}\label{eqn:defQless0}
        Q_{g, h}(t)\coloneqq \int_{t_{0}}^{t} e^{(\Gamma_{g}-\Gamma_{h})(t-s)}A_{g,h}(s)\ ds.
      \end{align}
      Again, $Q_{g, h}$ satisfies \cref{eqn:odeQ}. In order to establish $Q_{g, h}(t)\in\mathcal O((a+t)^{-1})$, \cref{sublemma:rho} will be important. Indeed, integration by parts shows that
      \begin{align}\label{eqn:Qless0IntByParts}
        Q_{g, h}(t) = P_{g, h}(t)-e^{(\Gamma_{g}-\Gamma_{h})(t-t_{0})}P_{g, h}(t_{0}) +\int_{t_{0}}^{t} (\Gamma_{g}-\Gamma_{h})e^{(\Gamma_{g}-\Gamma_{h})(t-s)}P_{g, h}(s)\ ds. 
      \end{align}
      Now $(\Gamma_{g}-\Gamma_{h})(t-t_{0})<-\epsilon(t-t_{0})$ for $t\geq t_{0}$ and by \cref{sublemma:rho} 
      \begin{align*}
        (a+s)^{-1} < e^{\rho (t-s)}(a+t)^{-1}\qquad (\forall t\geq s\geq t_{0}).
      \end{align*}
      Thus, we obtain from \cref{eqn:Qless0IntByParts} that
      \begin{align*}
        |Q_{g, h}(t)| &\leq \text{const.}\cdot \left((a+t)^{-1} + e^{(\rho-\epsilon)(t-t_{0})}(a+t)^{-1} + (a+t)^{-1}\int_{t_{0}}^{t} e^{(\rho-\epsilon)(t-s)}\ ds\right)\\
                      &\leq \text{const.}\cdot (a+t)^{-1}.
      \end{align*}
  \end{enumerate}
  Thus, we constructed a function $Q$ with the desired properties.

  \qed

\section*{Acknowledgements}
We would like to thank Margit Rösler for suggesting this topic and for many helpful discussions and comments on the manuscript. We would further like to thank Dominik Brennecken for many helpful discussions during the early stages of this project.
\fundingThanks


\begin{thebibliography}{NPP14}

\bibitem[AG21]{amriGasmi}
B.~Amri and A.~Gasmi.
\newblock On the estimates of the {D}unkl {K}ernel.
\newblock {\em Anal. Math.}, 47:1--12, 2021.

\bibitem[AT25]{ankerTrojanDihedral}
J.-P. Anker and B.~Trojan.
\newblock Optimal bounds for the {D}unkl kernel in the dihedral case.
\newblock {\em J. Funct. Anal.}, 288(3):110743, 2025.

\bibitem[BC86]{barletClerc}
D.~Barlet and J.-L. Clerc.
\newblock Le comportement à l'infini des fonctions de {B}essel généralisées, {I}.
\newblock {\em Adv. Math.}, 61(2):165--183, 1986.

\bibitem[BL15]{bodineLutz}
S.~Bodine and D.~A. Lutz.
\newblock {\em Asymptotic {I}ntegration of {D}ifferential and {D}ifference {E}quations}.
\newblock Lecture Notes in Mathematics : 2129. Cham : Springer International Publishing, 2015.

\bibitem[Che55]{chevalley}
C.~Chevalley.
\newblock Invariants of {F}inite {G}roups {G}enerated by {R}eflections.
\newblock {\em Amer. J. Math.}, 77(4):778--782, 1955.

\bibitem[Che91]{cherednik}
I.~Cherednik.
\newblock A unification of {K}nizhnik-{Z}amolodchikov and {D}unkl operators via affine {H}ecke algebras.
\newblock {\em Invent. Math.}, 106(2):411--432, 1991.

\bibitem[Cle87]{clercBessel}
J.-L. Clerc.
\newblock Le comportement à l'infini des fonctions de {B}essel généralisées, {II}.
\newblock {\em Adv. Math.}, 66(1):31--61, 1987.

\bibitem[CM82]{casselmanMilicic}
W.~Casselman and D.~Miličić.
\newblock {Asymptotic behavior of matrix coefficients of admissible representations}.
\newblock {\em Duke Math. J.}, 49(4):869 -- 930, 1982.

\bibitem[DH82]{duistermaatHeckman1982variation}
J.~J. Duistermaat and G.~J. Heckman.
\newblock On the variation in the cohomology of the symplectic form of the reduced phase space.
\newblock {\em Invent. Math.}, 69(2):259--268, 1982.

\bibitem[DH23]{dziubanskiHejna}
J.~Dziuba{\'n}ski and A.~Hejna.
\newblock Upper and lower bounds for the {D}unkl heat kernel.
\newblock {\em Calc. Var. Partial Differ. Equ.}, 62(1):25, 2023.

\bibitem[dJ93]{dejeu}
M.~F.~E. de~Jeu.
\newblock The {D}unkl transform.
\newblock {\em Invent. Math.}, 113(1):147--162, 1993.

\bibitem[dJ06]{deJeuPaleyWiener}
M.~F.~E. de~Jeu.
\newblock Paley--{W}iener theorems for the {D}unkl transform.
\newblock {\em Trans. Amer. Math. Soc.}, 358(10):4225--4250, 2006.

\bibitem[Dun89]{dunkl2}
C.~F. Dunkl.
\newblock Differential-{D}ifference {O}perators {A}ssociated to {R}eflection {G}roups.
\newblock {\em Trans. Amer. Math. Soc.}, 311:167--183, 1989.

\bibitem[Dun90]{dunkl3}
C.~F. Dunkl.
\newblock Operators commuting with {C}oxeter group actions on polynomials.
\newblock In D.~Stanton, editor, {\em Invariant {T}heory and {T}ableaux}, pages 107--117. Springer, 1990.

\bibitem[Dun91]{dunkl1991}
C.~F. Dunkl.
\newblock Integral {K}ernels with {R}eflection {G}roup {I}nvariance.
\newblock {\em Canad. J. Math.}, 43(6):1213--1227, 1991.

\bibitem[Dun92]{dunkl5}
C.~F. Dunkl.
\newblock Hankel transforms associated to finite reflection groups.
\newblock {\em Contemp. Math.}, 138:123--138, 1992.

\bibitem[Eas89]{eastham}
M.~S.~P. Eastham.
\newblock {\em The {A}symptotic {S}olution of {L}inear {D}ifferential {S}ystems: {A}pplications of the {L}evinson theorem.}
\newblock London Mathematical Society monographs : NS,4, Oxford science publications. Clarendon Press, 1989.

\bibitem[ER25]{etingofRains}
P.~Etingof and E.~Rains.
\newblock Bounds for asymptotic characters of simple {L}ie groups.
\newblock {\em Indag. Math.}, 36(6):1572--1591, 2025.
\newblock Special Issue: Dedicated to Tom Koornwinder on the occasion of his 80th birthday.

\bibitem[GLS96]{guilleminLermanSternberg}
V.~Guillemin, E.~Lerman, and S.~Sternberg.
\newblock {\em Symplectic fibrations and multiplicity diagrams}.
\newblock Cambridge University Press, 1996.

\bibitem[GS02]{graczykSawyerComplexProduct}
P.~Graczyk and P.~Sawyer.
\newblock The product formula for the spherical functions on symmetric spaces in the complex case.
\newblock {\em Pacific J. Math.}, 204(2):377--393, 2002.

\bibitem[GS23]{graczykSawyerAn}
P.~Graczyk and P.~Sawyer.
\newblock Sharp estimates for {$\uppercase{W}$}-invariant {D}unkl and heat kernels in the $\uppercase{A}_{n}$ case.
\newblock {\em Bull. Sci. Math.}, 186:103271, 2023.

\bibitem[GS24]{graczykSawyerA2}
P.~Graczyk and P.~Sawyer.
\newblock A {F}ormula and {S}harp {E}stimates for the {D}unkl {K}ernel for the {R}oot {S}ystem $\uppercase{A}_{2}$.
\newblock {\em J. Lie Theory}, 34(3):577--594, 2024.

\bibitem[GV88]{gangolliVaradarajan}
R.~Gangolli and V.~S. Varadarajan.
\newblock {\em Harmonic {A}nalysis of {S}pherical {F}unctions on {R}eal {R}eductive {G}roups}.
\newblock Ergebnisse der Mathematik und ihrer Grenzgebiete : B,101. Berlin [u.a.] : Springer, 1988.

\bibitem[HC57]{hcDiffOps}
Harish-Chandra.
\newblock {D}ifferential {O}perators on a {S}emisimple {L}ie {A}lgebra.
\newblock {\em Amer. J. Math.}, 79(1):87--120, 1957.

\bibitem[HC58a]{hcSphericalI}
Harish-Chandra.
\newblock Spherical {F}unctions on a {S}emisimple {L}ie {G}roup {I}.
\newblock {\em Amer. J. Math.}, 80(3):241--310, 1958.

\bibitem[HC58b]{hcSpherical}
Harish-Chandra.
\newblock Spherical {F}unctions on a {S}emisimple {L}ie {G}roup {II}.
\newblock {\em Amer. J. Math.}, 80(3):553--613, 1958.

\bibitem[Hec82]{heckmanProjectionsOfOrbits}
G.~J. Heckman.
\newblock Projections of orbits and asymptotic behavior of multiplicities for compact connected {L}ie groups.
\newblock {\em Invent. Math.}, 67:333--356, 1982.

\bibitem[Hec87]{heckmanII}
G.~J. Heckman.
\newblock Root {S}ystems and {H}ypergeometric {F}unctions. {II}.
\newblock {\em Compos. Math.}, 64(3):353--373, 1987.

\bibitem[Hec89]{heckman}
G.~J. Heckman.
\newblock A remark on the {D}unkl differential-difference operators.
\newblock In {\em Harmonic analysis on reductive groups}, volume 101 of {\em Progr. Math.}, pages 181--191. Birkhäuser, 1989.

\bibitem[Hel00]{helgasonGroups}
S.~Helgason.
\newblock {\em Groups and Geometric Analysis: Integral Geometry, Invariant Differential Operators, and Spherical Functions}.
\newblock Mathematical surveys and monographs. American Mathematical Society, 2000.

\bibitem[HO87]{heckmanOpdamI}
G.~J. Heckman and E.~M. Opdam.
\newblock Root {S}ystems and {H}ypergeometric {F}unctions. {I}.
\newblock {\em Compos. Math.}, 64(3):329--352, 1987.

\bibitem[NPP14]{narayananPasqualePusti}
E.~K. Narayanan, A.~Pasquale, and S.~Pusti.
\newblock Asymptotics of {H}arish-{C}handra expansions, bounded hypergeometric functions associated with root systems, and applications.
\newblock {\em Adv. Math.}, 252:227--259, 2014.

\bibitem[Opd93]{opdamBessel}
E.~M. Opdam.
\newblock Dunkl operators, {Bessel} functions and the discriminant of a finite {Coxeter} group.
\newblock {\em Compos. Math.}, 85(3):333--373, 1993.

\bibitem[RdJ02]{roslerDeJeu}
M.~Rösler and M.~F.~E. de~Jeu.
\newblock Asymptotic analysis for the {D}unkl kernel.
\newblock {\em J. Approx. Theory}, 119(1):110--126, 2002.

\bibitem[Rö99]{roslerPositivity}
M.~Rösler.
\newblock Positivity of {D}unkl's {I}ntertwining {O}perator.
\newblock {\em Duke Math. J.}, 98(3):445--463, 1999.

\bibitem[Saw17]{sawyer2017laplace}
P.~Sawyer.
\newblock A {L}aplace-type representation of the generalized spherical functions associated with the root systems of type {$A$}.
\newblock {\em Mediterr. J. Math.}, 14(4):147, 2017.

\bibitem[Sch08]{schapira}
B.~Schapira.
\newblock Contributions to the hypergeometric function theory of {H}eckman and {O}pdam: sharp estimates, {S}chwartz space, heat kernel.
\newblock {\em Geom. Funct. Anal.}, 18(1):222--250, 2008.

\bibitem[Tit39]{titchmarsh}
E.~C. Titchmarsh.
\newblock {\em The theory of functions}.
\newblock Oxford Univ. Press, London, 2nd edition, 1939.

\bibitem[Tre67]{treves}
F.~Treves.
\newblock {\em Topological {V}ector {S}paces, {D}istributions and {K}ernels}.
\newblock Pure and Applied Mathematics. Academic Press, 1967.

\bibitem[TV71]{trombiVaradarajan}
P.~C. Trombi and V.~S. Varadarajan.
\newblock Spherical {T}ransforms on {S}emisimple {L}ie {G}roups.
\newblock {\em Ann. of Math.}, 94(2):246--303, 1971.

\end{thebibliography}
\end{document}